\DeclareMathOperator{\Ext}{Ext}
\DeclareMathOperator{\Hom}{Hom} 
\DeclareMathOperator{\rad}{rad} 
\DeclareMathOperator{\inj}{inj}
\DeclareMathOperator{\id}{id}
\DeclareMathOperator{\rep}{rep} 
\DeclareMathOperator{\olp}{olp} 
\DeclareMathOperator{\eolp}{eolp} 
\DeclareMathOperator{\GL}{GL} 
\DeclareMathOperator{\Iso}{Iso} 
\DeclareMathOperator{\mdeg}{mdeg} 
\DeclareMathOperator{\codim}{codim} 
\definecolor{lblue}{rgb}{0.3,0.0,4.4}
\newcommand{\ef}{\underline{e}}
\newcommand{\eef}{\underline{e}'}  
\newcommand*{\punkte}{\dots\unkern}
\newcolumntype{C}[1]{>{\centering\arraybackslash}p{#1}}
\newcommand{\A}{\mathcal{A}} 
\newcommand{\Fa}{\mathcal{F}}
\newcommand{\U}{\mathcal{U}} 
\newcommand{\Q}{\mathcal{Q}} 
\newcommand{\Orb}{\mathcal{O}} 
\newcommand{\N}{\mathcal{N}} 
\newcommand{\V}{\mathcal{V}} 
\newcommand{\Sa}{\mathcal{S}}
\newcommand{\dimv}{\underline{\dim}}
\newcommand{\dfp}{\underline{d}_{P}} 
\newcommand{\df}{\underline{d}} 
\newcommand{\dfs}{\underline{d}_B}
\newtheorem{theorem}{Theorem}[section]
\newtheorem{lemma}[theorem]{Lemma}
\newtheorem{proposition}[theorem]{Proposition}
\newtheorem{corollary}[theorem]{Corollary}
\newtheorem{example}[theorem]{Example}
\begin{document}
\parindent0pt
\title{\bf Finite parabolic conjugation on varieties of nilpotent matrices}

\author{Magdalena Boos\\ Fachbereich C - Mathematik\\ Bergische Universit\"at Wuppertal\\ D - 42097 Wuppertal\\ boos@math.uni-wuppertal.de}
\date{}
\maketitle

\begin{abstract}
We consider the conjugation-action of an arbitrary upper-block parabolic subgroup of $\GL_n(\mathbf{C})$ on the variety of $x$-nilpotent complex matrices and translate it to a representation-theoretic context.  We obtain a criterion as to whether the action admits a finite number of orbits and specify a system of representatives for the orbits in the finite case of $2$-nilpotent matrices. Furthermore, we give a set-theoretic description of their closures and specify the minimal degenerations in detail for the action of the Borel subgroup. We show that in all non-finite cases, the corresponding quiver algebra is of wild representation type.

\end{abstract}
\section{Introduction}\label{intro}
In algebraic Lie theory, algebraic group actions on affine varieties can be studied "vertically", that is, by their orbits and their closures.\\[1ex]
A well-known example is the study of the adjoint action of a reductive algebraic group on its Lie algebra and numerous variants thereof, in particular the conjugacy classes of complex (nilpotent) square matrices.
In 1870, the classification of these by so-called Jordan normal forms was described by M. Jordan \cite{Jo1,Jo2}. Their closures were described by M. Gerstenhaber \cite{Ger} and W. Hesselink \cite{He} in the second half of the twentieth century in terms of partitions and visualized by combinatorial objects named Young Diagrams.\\[1ex]
We turn our main attention towards algebraic non-reductive group actions that are induced by the conjugation action of the general linear group $\GL_n$ over $\mathbf{C}$. For example, the standard parabolic subgroups $P$ (and, therefore, the standard Borel subgroup $B$) and the unipotent subgroup $U$ of $\GL_n$ are not reductive. 
It suggests itself to consider their action on the variety $\N_n^{(x)}$ of $x$-nilpotent matrices of square size $n$ via conjugation which we discuss in this work.\\[1ex]
A recent development in this field is A. Melnikov's study of the $B$-action on the variety of upper-triangular $2$-nilpotent matrices via conjugation \cite{Me1,Me2} motivated by Springer Theory. The detailed description of the orbits and their closures is given in terms of so-called link patterns; these are combinatorial objects visualizing the set of involutions in the symmetric group $S_n$.  In \cite{BoRe}, M. Reineke and the author generalize these results to the Borel-orbits of all $2$-nilpotent matrices and describe the minimal, disjoint degenerations corresponding to their orbit closure relations. Furthermore, L. Fresse describes singularities in the upper-triangular orbit closures by translating the group action to a certain group action on Springer fibres (see \cite{Fr}).\\[1ex]
Another recent outcome is L. Hille's and G. R\"ohrle's study of the action of $P$ on its unipotent radical $P_u$, and on the corresponding Lie algebra $\mathfrak{p}_u$ (see \cite{HiRoe}). They obtain a criterion which varifies that the number of orbits is finite if and only if the nilpotency class of $P_u$ is less or equal than $4$. This result is generalized to all classical groups $G$. Also, Magmar, Weyman and Zelevinski discuss flag varieties of finite types in \cite{MWZ}. Given a semi-simple Lie algebra $\mathfrak{g}$ and its Lie group $G$, D. Panyushev considers the adjoint action in \cite{Pan1} and shows that, given a nilpotent element $e\in\mathfrak{g}\backslash\{0\}$, the orbit $G.e$ is spherical if and only if $({\rm ad}_e)^4=0$.  The notion of sphericity translates to $G.e$ admitting only a finite number of Borel-orbits, see \cite{Br1, Vin}.\\[1ex]
In this work, we make use of a translation of the classification problem of the $P$-orbits in $\N_n^{(x)}$ to  the description of certain isomorphism classes of representations of a finite-dimensional algebra in Section \ref{transsect}. By making use of this translation, we describe the $P$-orbits in $\N_n^{(2)}$ in Section \ref{x2} as well as their closures in Section \ref{closures} in detail. Here, all minimal degenerations for the Borel-action are specified as well. This particular action admits only a finite number of orbits and we describe the finite case of a maximal parabolic acting on $3$-nilpotent matrices in Section \ref{degree3}. We find a criterion as to whether the action admits a finite number of orbits in Section \ref{fincrit} and show that in every remaining case, the corresponding quiver algebra is of wild representation type. \\[1ex]
The results stated in this article represent a part of the outcome of the dissertation \cite{B1}.\\[1ex]
{\bf Acknowledgments:} The author would like to thank M. Reineke for various valuable discussions concerning the methods and results of this work. Furthermore,  A. Melnikov, K. Bongartz and M. Bender are being thanked for inspirational thoughts and helpful remarks.\\
The published version of this article is \cite{B2}.

\section{Theoretical background}\label{theory}
We denote by $K\coloneqq \mathbf{C}$ the field of complex numbers and by $\GL_n\coloneqq\GL_n(K)$ the general linear group for a fixed integer $n\in\textbf{N}$ regarded as an affine variety. We include basic knowledge about the representation theory of finite-dimensional algebras \cite{ASS}.\\[1ex]
A \textit{finite quiver} $\Q$ is a directed graph $\Q=(\Q_0,\Q_1,s,t)$, such that $\Q_0$ is a finite set of \textit{vertices} and $\Q_1$ is a finite set of \textit{arrows}, whose elements are written as $\alpha\colon s(\alpha)\rightarrow t(\alpha)$.
The \textit{path algebra} $K\Q$ is defined as the $K$-vector space with a basis consisting of all paths in $\Q$, that is, sequences of arrows $\omega=\alpha_s\punkte\alpha_1$, such that $t(\alpha_{k})=s(\alpha_{k+1})$ for all $k\in\{1,\punkte,s-1\}$; formally included is a path $\varepsilon_i$ of length zero for each $i\in \Q_0$ starting and ending in $i$. The multiplication  is defined by
\begin{center}
 $\omega\cdot\omega'=\left\{\begin{array}{ll}\omega\omega',&~\textrm{if}~t(\beta_t)=s(\alpha_1);\\
0,&~\textrm{otherwise.}\end{array}\right.$\end{center}
where $\omega\omega'$ is the  concatenation of paths $\omega= \alpha_s ... \alpha_1$ and $\omega' = \beta_t ... \beta_1$.\\[1ex]
We define the \textit{radical} $\rad(K\Q)$ of $K\Q$ to be the (two-sided) ideal generated by all paths of positive length; then an arbitrary ideal $I$ of $K\Q$ is called \textit{admissible} if there exists an integer $s$ with $\rad(K\Q)^s\subset I\subset\rad(K\Q)^2$.\\[1ex]
A finite-dimensional \textit{$K$-representation} of $\Q$ is a tuple \[((M_i)_{i\in \Q_0},(M_\alpha\colon M_i\rightarrow M_j)_{(\alpha\colon i\rightarrow j)\in \Q_1}),\] where the $M_i$ are $K$-vector spaces, and the $M_{\alpha}$ are $K$-linear maps.\\[1ex]
 A \textit{morphism of representations} $M=((M_i)_{i\in \Q_0},(M_\alpha)_{\alpha\in \Q_1})$ and
 \mbox{$M'=((M'_i)_{i\in \Q_0},(M'_\alpha)_{\alpha\in \Q_1})$} consists of a tuple of $K$-linear maps $(f_i\colon M_i\rightarrow M'_i)_{i\in \Q_0}$, such that $f_jM_\alpha=M'_\alpha f_i$ for every arrow $\alpha\colon i\rightarrow j$ in $\Q_1$.\\[1ex]
For a representation $M$ and a path $\omega$ in $\Q$ as above, we denote $M_\omega=M_{\alpha_s}\cdot\punkte\cdot M_{\alpha_1}$. A representation $M$ is called \textit{bound by $I$} if $\sum_\omega\lambda_\omega M_\omega=0$ whenever $\sum_\omega\lambda_\omega\omega\in I$.\\[1ex]
These definitions yield certain categories as follows: We denote by $\rep_K(\Q)$ the abelian $K$-linear category of all representations of $\Q$ and by   $\rep_K(\Q,I)$ the category of representations of $\Q$ bound by $I$; the latter is equivalent to the category of finite-dimensional $K\Q/I$-representations.\\[1ex]
Given a representation $M$ of $\Q$, its \textit{dimension vector} $\dimv M\in\mathbf{N}\Q_0$ is defined by $(\dimv M)_{i}=\dim_k M_i$ for $i\in \Q_0$. Let us fix a dimension vector $\df\in\mathbf{N}\Q_0$, then we denote by $\rep_K(\Q,I)(\df)$ the full subcategory of $\rep_K(\Q,I)$ which consists of representations of dimension vector $\df$.\\[1ex]
For certain classes of finite-dimensional algebras, a convenient tool for the classification of the indecomposable representations is the \textit{Auslander-Reiten quiver} $\Gamma(\Q,I)$ of $\rep_K(\Q,I)$. Its vertices $[M]$ are given by the isomorphism classes of indecomposable representations of $\rep_K(\Q,I)$; the arrows between two such vertices $[M]$ and $[M']$ are parametrized by a basis of the space of so-called irreducible maps $f\colon M\rightarrow M'$.\\[1ex] 
One standard technique to calculate the Auslander-Reiten quiver is the \textit{knitting process} (see, for example, \cite[IV.4]{ASS}).
 In some cases, the Auslander-Reiten quiver $\Gamma(\Q,I)$ can be calculated by using \textit{covering techniques} (see \cite{Ga3} or \cite{BoGa}). We will make use of the latter and describe some more details on these techniques later on.\\[1ex]
By defining the affine space $R_{\df}(\Q):= \bigoplus_{\alpha\colon i\rightarrow j}\Hom_K(K^{d_i},K^{d_j})$, one realizes that its points $m$ naturally correspond to representations $M\in\rep_K(\Q)(\df)$ with $M_i=K^{d_i}$ for $i\in \Q_0$. 
 Via this correspondence, the set of such representations bound by $I$ corresponds to a closed subvariety $R_{\df}(\Q,I)\subset R_{\df}(\Q)$.\\[1ex]
The algebraic group $\GL_{\df}=\prod_{i\in \Q_0}\GL_{d_i}$ acts on $R_{\df}(\Q)$ and on $R_{\df}(\Q,I)$ via base change, furthermore the $\GL_{\df}$-orbits $\Orb_M$ of this action are in bijection to the isomorphism classes of representations $M$ in $\rep_K(\Q,I)(\df)$.\\[1ex]
A finite-dimensional $K$-algebra $\A:=K\Q/I$, such that $\rep_K(\Q,I)$ is locally bounded is called of tame representation type (or simply \textit{tame}) if for every integer $d$ there is an integer $m_d$ and there are finitely generated $K[x]$-$\A$-bimodules $M_1,\punkte,M_{m_d}$ that are free over $K[x]$, such that for all but finitely many isomorphism classes of indecomposable right $\A$-modules $M$ of dimension $d$, there are elements $i\in\{1,\punkte,m\}$ and $\lambda\in K$, such that  $M\cong  K[x]/(x-\lambda)\otimes_{K[x]}M_i$.\\[1ex]
It is called of wild representation type (or simply \textit{wild}) if there is a finitely generated $K\langle X,Y\rangle$-$\A$-bimodule that is free over $K\langle X,Y\rangle$, such that the functor $\_\otimes_{K\langle X,Y\rangle}M $ sends non-isomorphic finite-dimensional $K\langle X,Y\rangle$-modules to non-isomorphic $\A$-modules.\\[1ex]
In 1979, J. A. Drozd proved the following theorem (see \cite{Dr}).
\begin{theorem}
 Every finite-dimensional algebra is either tame or wild.
\end{theorem}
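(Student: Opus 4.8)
The plan is to reduce the statement, by a chain of equivalences, to a dichotomy for \emph{matrix problems} and to prove that dichotomy by a reduction algorithm; this is Drozd's original strategy \cite{Dr}, later streamlined by Crawley--Boevey, and it is genuinely deep, so what follows is only a skeleton. The first step replaces the module category of $\A=K\Q/I$ by the representation category of a \emph{bocs} (a $K$-category carrying a compatible coalgebra, equivalently a ``differential'' structure): choosing a minimal projective generator of $\A$-mod and applying the Drozd--Roiter construction, one obtains a layered bocs $\mathfrak{B}_\A$ whose category of representations is equivalent, up to finitely many indecomposables in each dimension vector, to the category of finite-dimensional $\A$-modules. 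The point of this translation is that it is compatible both with the notion of ``finitely many one-parameter families in each dimension'' and with the notion of a representation embedding of $K\langle X,Y\rangle$-modules, so that $\A$ is tame, resp.\ wild, if and only if $\mathfrak{B}_\A$ is.

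Second, one develops the reduction calculus for layered bocses: regularization (using the differential to delete superfluous solid arrows), edge and loop reductions (decomposing a vertex according to the generalized-eigenspace decomposition of a distinguished block of matrices), and unraveling. Each such step produces from a bocs $\mathfrak{B}$ a new layered bocs $\mathfrak{B}'$ together with a functor $\rep(\mathfrak{B}')\to\rep(\mathfrak{B})$ that is fully faithful onto a subcategory containing every indecomposable of the dimension under consideration, and each step strictly decreases a combinatorial complexity invariant (essentially the number of arrows, weighted by dimension). In particular wildness descends: if $\mathfrak{B}'$ is wild, so is $\mathfrak{B}$.

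Third, one proves the dichotomy for bocses by induction on the complexity invariant. If along some branch of the reduction one is forced to produce a sub-bocs that maps onto the bocs governing $K\langle X,Y\rangle$-modules --- concretely, a configuration at which two square matrices of the same generic size may be prescribed independently --- then $\mathfrak{B}_\A$, hence $\A$, is wild. If this never happens, the reductions terminate at a \emph{minimal} bocs; one then checks by inspection that a minimal non-wild bocs is of ``tame type'', its layered structure forcing the indecomposables of each dimension to form finitely many one-parameter families (the building blocks being the tame hereditary $\widetilde{A}$-situation together with nodal data). Transporting this conclusion back along the reduction functors and along the equivalence between $\mathfrak{B}_\A$ and $\A$ gives that $\A$ is tame, and the dichotomy follows.

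The crux --- and the main obstacle --- is the third step: showing that ``no wild sub-bocs ever appears'' really does force the bounded one-parameter behaviour, rather than merely excluding the coarsest obstruction. This splits into (a) proving that the reduction algorithm terminates along every branch in the non-wild case, which requires tight control of how the differential of the bocs changes under successive reductions so that the complexity invariant is genuinely monotone, and (b) the explicit classification of minimal non-wild bocses. Subsidiary difficulties are arranging the Drozd--Roiter bocs to be triangular/layered in the first place, and checking that the reduction functors respect the definitions of tame and wild (e.g.\ that a finite family of $K[x]$-$\mathfrak{B}'$-bimodules pulls back to a finite family over $\A$, and that passing to a dense full subcategory cannot destroy non-wildness). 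In the present paper only the wild half of the dichotomy is ever invoked, and in each non-finite case we shall in fact display an explicit representation embedding of $K\langle X,Y\rangle$, so the full strength of the theorem is used only as a convenient black box.
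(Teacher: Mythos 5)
First, note that the paper does not prove this statement at all: it is quoted verbatim as Drozd's theorem with a citation to \cite{Dr}, and is used later only as a black box (indeed only its contrapositive, ``not tame implies wild'', and even that is bypassed by exhibiting explicit two-parameter families). So there is no ``paper's own proof'' to compare against; the relevant benchmark is the literature proof.

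Measured against that, your proposal correctly identifies the architecture of Drozd's argument --- passage to a layered bocs via the Drozd--Roiter construction, the reduction calculus (regularization, edge/loop reduction), descent of wildness along reduction functors, and the classification of minimal non-wild bocses --- and you are candid about where the real work lies. But as written it is a plan, not a proof: every load-bearing step is asserted rather than established. In particular, (a) the equivalence between $\A$-mod and the representations of the associated bocs, and the fact that this equivalence transports both the tameness and the wildness conditions as defined in the paper (families of $K[x]$-$\A$-bimodules, resp.\ a representation embedding of $K\langle X,Y\rangle$-mod), is nontrivial and is exactly the kind of bookkeeping that cannot be waved through; (b) the monotonicity of the complexity invariant and hence termination of the reduction in the non-wild case is the technical heart and is only named, not argued; (c) the inspection of minimal non-wild bocses, which is where ``not wild'' is upgraded to the positive one-parameter description required by tameness, is likewise deferred. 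Since you acknowledge all of this yourself, the honest verdict is that your text is an accurate roadmap to \cite{Dr} (and to Crawley--Boevey's account) rather than a proof, which is entirely consistent with how the present paper uses the theorem --- namely, as a citation.
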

The notion of a tame algebra $\A$ yields that there are at most $1$-parameter families of pairwise non-isomorphic indecomposable $\A$-modules; in the wild case there are parameter families of arbitrary many parameters of pairwise non-isomorphic indecomposable $\A$-modules. In order to show that an algebra is wild, it, thus, suffices to describe one particular such $2$-parameter family.\\[1ex]
The theorem of P. Gabriel (see \cite{Ga1}) shows that $K\Q$ is of finite type if and only if the underlying unoriented graph of $\Q$ is a disjoint union of Dynkin graphs $A$, $D$, $E_6$, $E_7$ or $E_8$.
The algebra $K\Q$ is representation-infinite and tame if and only if the underlying unoriented graph is a disjoint union of at least one extended Dynkin graph $\widetilde{A}$, $\widetilde{D}$, $\widetilde{E_6}$, $\widetilde{E_7}$ or $\widetilde{E_8}$ and Dynkin graphs.

 \section{Translation to a representation-theoretic setup}\label{transsect}
We fix a parabolic subgroup $P$ of $\GL_n$ of block sizes $(b_1,\punkte,b_p)$.\\[1ex] 
We define  $\Q_p$ to be the quiver
\begin{center}\small\begin{tikzpicture}
\matrix (m) [matrix of math nodes, row sep=0.01em,
column sep=1.5em, text height=0.5ex, text depth=0.1ex]
{\Q_p\colon & \bullet & \bullet &  \bullet & \cdots  & \bullet & \bullet  & \bullet \\ & \mathrm{1} & \mathrm{2} &  \mathrm{3} & &   \mathrm{p-2} &  \mathrm{p-1}  & \mathrm{p} \\ };
\path[->]
(m-1-2) edge node[above=0.05cm] {$\alpha_1$} (m-1-3)
(m-1-3) edge  node[above=0.05cm] {$\alpha_2$}(m-1-4)
(m-1-6) edge  node[above=0.05cm] {$\alpha_{p-2}$}(m-1-7)
(m-1-7) edge node[above=0.05cm] {$\alpha_{p-1}$} (m-1-8)
(m-1-8) edge [loop right] node{$\alpha$} (m-1-8);\end{tikzpicture}\end{center} 
and $\A(p,x)\coloneqq K \Q_p/I_x$  to be the finite-dimensional algebra, where $I_x\coloneqq (\alpha^x)$ is an admissible ideal. We fix the dimension vector 
\[\dfp\coloneqq(d_1,\punkte,d_p)\coloneqq(b_1,b_1+b_2, \punkte, b_1+...+b_p)\]
 and formally set $b_0=0$. As explained in Section \ref{theory}, the algebraic group $\GL_{\dfp}$ acts on $R_{\dfp}(\Q_p,I_x)$; the orbits of this action are in bijection with the isomorphism classes of representations in $\rep_{K}(\Q_p,I_x)(\dfp)$.\\[1ex]
Let us define $\rep_{K}^{\inj}(\Q_p,I_x)(\dfp)$ to be the full subcategory of $\rep_{K}(\Q_p,I_x)(\dfp)$ consisting of representations $((M_i)_{1\leq i\leq p},(M_{\rho})_{\rho\in \Q_1})$, such that $M_{\rho}$ is injective if $\rho=\alpha_i$ for every $i\in\{1,\punkte, p-1\}$. Corresponding to this subcategory, there is an open subset $R_{\dfp}^{\inj}(\Q_p,I_x)\subset R_{\dfp}(\Q_p,I_x)$, which is stable under the $\GL_{\dfp}$-action.\\[1ex]
We denote $\Orb_M:=\GL_{\dfp}.m$ if $m\in R_{\dfp}^{\inj}(\Q_p,I_x)$ corresponds to $M\in\rep^{\inj}(\Q_p,I_x)(\dfp)$ as in Section \ref{theory}.
In order to describe the orbit closure $\overline{\Orb_M}$, we denote $M\leq_{\deg}M'$ if $\Orb_{M'}\subset\overline{\Orb_M}$ in $R_{\dfp}(\Q_p,I_x)$ for a representation $M'$ and say that $M'$ is a \textit{degeneration} of $M$. 
 Of course, in order to describe all degenerations, it is sufficient to calculate all \textit{minimal degenerations} $M<_{\mdeg}M'$, that is, degenerations $M<_{\deg}M'$, such that if $M\leq_{\deg}L\leq_{\deg}M'$, then $M\cong L$ or $M'\cong L$.
The following lemma is a slightly generalized version of \cite[Lemma 3.2]{BoRe}. The proof is similar, though.
\begin{lemma} \label{bijection}
There is an isomorphism $R_{\dfp}^{\inj}(\Q_p,I_x)\cong \GL_{\dfp}\times^{P}\N_n^{(x)}$. Thus, there exists a bijection $\Phi$ between the set of $P$-orbits in $\N_n^{(x)}$ and the set of $\GL_{\dfp}$-orbits in $R_{\dfp}^{\inj}(\Q_p,I_x)$, which sends an orbit $P.N\subseteq \N_n^{(x)}$ to the isomorphism class of the representation
\begin{center}\small\begin{tikzpicture}
\matrix (m) [matrix of math nodes, row sep=0.05em,
column sep=2em, text height=1.5ex, text depth=0.2ex]
{ K^{d_1} & K^{d_2} & K^{d_3} & \cdots  & K^{d_{p-2}} & K^{d_{p-1}}  & K^{n}\\ };
\path[->]
(m-1-1) edge node[above=0.05cm] {$\epsilon_1$} (m-1-2)
(m-1-2) edge  node[above=0.05cm] {$\epsilon_2$}(m-1-3)
(m-1-3) edge  (m-1-4)
(m-1-4) edge  (m-1-5)
(m-1-5) edge  node[above=0.05cm] {$\epsilon_{p-2}$}(m-1-6)
(m-1-6) edge node[above=0.05cm] {$\epsilon_{p-1}$} (m-1-7)
(m-1-7) edge [loop right] node{$N$} (m-1-7);\end{tikzpicture}\end{center}
 (denoted $M^N$) with natural embeddings $\epsilon_i\colon K^{d_i}\hookrightarrow K^{d_{i+1}}$. This bijection preserves orbit closure relations,
 dimensions of stabilizers (of single points) and codimensions.
\end{lemma}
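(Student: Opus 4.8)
\emph{Proof proposal.} The plan is to identify $R_{\dfp}^{\inj}(\Q_p,I_x)$ with the associated fibre bundle $\GL_{\dfp}\times^{P}\N_n^{(x)}$ and then to extract the remaining assertions from the standard properties of such bundles, following the pattern of \cite[Lemma 3.2]{BoRe}. First I would set up the $P$-action: since the standard parabolic $P$ preserves the flag $0\subset K^{d_1}\subset\cdots\subset K^{d_{p-1}}\subset K^{d_p}=K^n$, restricting an element of $P$ to the steps of this flag defines a closed embedding $P\hookrightarrow\GL_{\dfp}$ whose $p$-th component is the inclusion $P\subseteq\GL_n$. Through this embedding $P$ acts freely on $\GL_{\dfp}$ by right translation and (by conjugation) linearly on $\N_n^{(x)}$, so the quotient $\GL_{\dfp}\times^{P}\N_n^{(x)}:=(\GL_{\dfp}\times\N_n^{(x)})/P$ exists as a quasi-projective variety, carrying a $\GL_{\dfp}$-action induced by left translation on the first factor and a $\GL_{\dfp}$-equivariant projection $\pi$ onto $\GL_{\dfp}/P$ with all fibres isomorphic to $\N_n^{(x)}$.

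The core step is the comparison morphism. Writing $\iota_i\colon K^{d_i}\hookrightarrow K^{d_{i+1}}$ for the standard inclusions, I would define $\psi\colon\GL_{\dfp}\times\N_n^{(x)}\to R_{\dfp}(\Q_p)$ by sending $\bigl((g_1,\ldots,g_p),N\bigr)$ to the representation with $M_{\alpha_i}=g_{i+1}\iota_ig_i^{-1}$ and $M_\alpha=g_pNg_p^{-1}$. Since each $M_{\alpha_i}$ is injective and $M_\alpha^{x}=0$, the map $\psi$ lands in $R_{\dfp}^{\inj}(\Q_p,I_x)$, and a short check (using that $p\in P$ intertwines $\iota_i$ with its restrictions) shows $\psi$ is constant on $P$-orbits, hence descends to a $\GL_{\dfp}$-equivariant morphism $\overline\psi\colon\GL_{\dfp}\times^{P}\N_n^{(x)}\to R_{\dfp}^{\inj}(\Q_p,I_x)$. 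For surjectivity one uses the injectivity of the structure maps of a given $M$ to build the $g_i\in\GL_{d_i}$ inductively (each step extends an injective ``partial'' matrix to an invertible one, which can be done algebraically and Zariski-locally in $M$, simultaneously furnishing a local inverse of $\overline\psi$), setting $N=g_p^{-1}M_\alpha g_p$; for injectivity one checks that two preimages of the same representation differ exactly by the $P$-action, the condition $h_{i+1}\iota_i=\iota_ih_i$ forcing the last component of $h$ to lie in $P$. Thus $\overline\psi$ is a bijective morphism with a local inverse, hence an isomorphism. (Alternatively, since the target is normal — nilpotent orbit closures being normal — one may pass from the bijective morphism $\overline\psi$ to an isomorphism via Zariski's main theorem.) This gives $R_{\dfp}^{\inj}(\Q_p,I_x)\cong\GL_{\dfp}\times^{P}\N_n^{(x)}$.

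It then remains to harvest the stated properties from this $\GL_{\dfp}$-equivariant identification, which is the routine part. Identifying $\N_n^{(x)}$ with the fibre $\pi^{-1}(eP)$, one has $\GL_{\dfp}\cdot[e,N]\cap\pi^{-1}(eP)=P\cdot N$, which yields the bijection $\Phi$ with $\Phi(P.N)=\Orb_{M^N}$ (take all $g_i=\id$). Because $\pi$ is a locally trivial fibre bundle, the closure of a $\GL_{\dfp}$-orbit meets $\pi^{-1}(eP)$ precisely in the closure of the corresponding $P$-orbit, so $\Phi$ preserves $\leq_{\deg}$; since a tuple stabilising $M^N$ is determined by its last component, which must lie in $P\cap Z_{\GL_n}(N)$, one gets $\mathrm{Stab}_{\GL_{\dfp}}(M^N)\cong\mathrm{Stab}_P(N)$, so stabiliser dimensions agree; and from $\dim R_{\dfp}^{\inj}(\Q_p,I_x)=\dim(\GL_{\dfp}/P)+\dim\N_n^{(x)}$ together with $\dim\Orb_{M^N}=\dim\GL_{\dfp}-\dim\mathrm{Stab}_P(N)=\dim(\GL_{\dfp}/P)+\dim(P.N)$ one reads off the equality of codimensions.

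I expect the main obstacle to be the passage from the bijective $\GL_{\dfp}$-equivariant morphism $\overline\psi$ to an isomorphism of varieties — equivalently, producing a Zariski-local section of $\GL_{\dfp}\to\GL_{\dfp}/P$ (or a local inverse of $\overline\psi$) and verifying that the fibre-bundle formalism really does transport orbit closures, stabilisers and codimensions as claimed. Everything else is elementary linear algebra and bookkeeping, already carried out for the case $p=2$ in \cite{BoRe}.
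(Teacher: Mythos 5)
Your proposal is correct and follows essentially the same route as the paper, which simply refers to the associated-fibre-bundle argument of \cite[Lemma 3.2]{BoRe}: one identifies $R_{\dfp}^{\inj}(\Q_p,I_x)$ with $\GL_{\dfp}\times^{P}\N_n^{(x)}$ via $\bigl((g_i)_i,N\bigr)\mapsto\bigl(g_{i+1}\iota_i g_i^{-1},\,g_p N g_p^{-1}\bigr)$ and then transports orbits, closures, stabilisers and codimensions through the bundle. The individual verifications you sketch (descent to the quotient, local sections obtained by extending injective maps to invertible ones, identification of the stabiliser of $M^N$ with $\mathrm{Stab}_P(N)$) are exactly the standard ones and are correct.
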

Due to considerations of different parabolic subgroups and nilpotency degrees, the classification of the corresponding isomorphism classes of representations differs wildly.

\section[P-orbits in the variety of 2-nilpotent matrices]{$P$-orbits in $\N_n^{(2)}$}\label{x2}
Let us consider the action of $P$ on the variety $\N_n^{(2)}$ of $2$-nilpotent $n\times n$- matrices. \\[1ex]
As the theorem of W. Krull, R. Remak and O. Schmidt states, every representation in $\rep_{K}(\Q_p,I_2)$ can be decomposed into a direct sum of indecomposables, which is unique up to permutations and isomorphisms. Following \cite[Theorem 3.3]{BoRe}, the following lemma classifies the indecomposables in $\rep_{K}^{\inj}(\Q_p,I_2)$.
\begin{lemma}\label{indec2nilp}
 Up to isomorphisms, the indecomposable representations in $\rep_{K}^{\inj}(\Q_p,I_2)$ are for $1\leq i,j\leq p$ and (1) $j\leq i$ or (2)  $j> i$\\[1ex]
$\begin{tikzpicture}
\matrix (m) [matrix of math nodes, row sep=0.02em,
column sep=0.08em, text height=1.0ex, text depth=0.25ex]
{(1)~\U_{i,j}:& 0 & \xrightarrow{0} & \cdots &  0 & \xrightarrow{0} & K & \xrightarrow{id} & \cdots  & K & \xrightarrow{e_1} & K^{2} & \xrightarrow{id}& \cdots & \xrightarrow{id} & K^{2}  \\};
\path[->]
(m-1-16) edge [loop right] node{$\alpha$} (m-1-16);
\end{tikzpicture}$\\[1ex]
$\begin{tikzpicture}
\matrix (m) [matrix of math nodes, row sep=0.02em,
column sep=0.08em, text height=1.0ex, text depth=0.25ex]
{(2)~\U_{i,j}:& 0 & \xrightarrow{0} & \cdots  & 0 & \xrightarrow{0} & K & \xrightarrow{id} & \cdots  & K & \xrightarrow{e_2} & K^{2} & \xrightarrow{id}& \cdots & \xrightarrow{id} & K^{2}  \\ };
\path[->]
(m-1-16) edge [loop right] node{$\alpha$} (m-1-16);
\end{tikzpicture}$\\[1ex]
$\begin{tikzpicture}
\matrix (m) [matrix of math nodes, row sep=0.02em,
column sep=0.1em, text height=1.0ex, text depth=0.25ex]
{\V_{i}:&  0 & \xrightarrow{0} & \cdots & \xrightarrow{0} & 0 & \xrightarrow{0} & K & \xrightarrow{id} & \cdots & \xrightarrow{id} & K  \\};
\path[->]
(m-1-12) edge [loop right] node{$0$} (m-1-12);
\end{tikzpicture}$\\[1ex]
Here, $e_1$ and $e_2$ are the standard coordinate vectors of $K^2$ and $\alpha\cdot e_1=e_2$, $\alpha\cdot e_2=0$.
\end{lemma}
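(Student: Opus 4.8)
The plan is to deduce the classification from the structure theory of string algebras. First one checks that $\A(p,2)=K\Q_p/(\alpha^2)$ is special biserial (indeed gentle): every vertex of $\Q_p$ has at most one outgoing arrow and at most two incoming arrows (two only at $p$, namely $\alpha_{p-1}$ and $\alpha$), for each arrow $\beta$ there is at most one $\gamma$ with $\gamma\beta\notin I_2$ and at most one $\delta$ with $\beta\delta\notin I_2$, and $I_2$ is generated by the single monomial $\alpha^2$. Hence, by the classical description of the module category of a string algebra, every indecomposable in $\rep_K(\Q_p,I_2)$ is a string module or a band module.

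Next I would observe that there are no band modules: the subquiver $1\to 2\to\cdots\to p$ is a tree, so a closed reduced walk must traverse the loop $\alpha$, yet a reduced walk can use $\alpha^{\pm1}$ at most once because $\alpha^2\in I_2$; thus no cyclic string exists and $\A(p,2)$ is representation-finite. The strings are then easily listed: in the linear part every reduced walk is an interval $[a,b]$ with $1\le a\le b\le p$, and the only further strings are the ``$V$-shaped'' walks obtained by gluing two intervals $[a,p]$ and $[b,p]$ at the vertex $p$ through $\alpha$ or through $\alpha^{-1}$.

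I would then impose the condition defining $\rep_K^{\inj}(\Q_p,I_2)$, that each $M_{\alpha_i}$ be injective. For the interval $[a,b]$ the map $M_b\to M_{b+1}$ equals $K\to 0$ unless $b=p$, so exactly the strings $[a,p]$ survive; these are the representations $\V_a$ (the loop acting as $0$). A ``$V$-shaped'' string, by contrast, has both strands running up to $p$, so each $M_{\alpha_i}$ is block-diagonal with identity blocks and is injective. It remains to match the ``$V$-shaped'' string modules with the $\U_{i,j}$: writing the two strands as $[a,p]$, $[b,p]$ with $a\le b$, the dimension vector is $(0,\dots,0,1,\dots,1,2,\dots,2)$ with the $1$'s in positions $a,\dots,b-1$, the loop acts at $p$ as a Jordan block on the two strand-tops, which we name $e_1,e_2$ so that $\alpha e_1=e_2$ and $\alpha e_2=0$, and the $1$-part of the flag, carried by the $a$-strand, lies in $\langle e_1\rangle$ when the loop of the string points out of the $a$-strand — giving $\U_{i,j}$ with $j\le i$ (put $j=a$, $i=b$) — and in $\langle e_2\rangle=\ker\alpha$ when it points into the $a$-strand — giving $\U_{i,j}$ with $j>i$ (put $i=a$, $j=b$). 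For $a<b$ these two are non-isomorphic since $N$ is nonzero on the $1$-line in the one and zero in the other, whereas for $a=b$ the reversal $w\mapsto w^{-1}$ identifies them, which is exactly why the statement records only the alternative ``$j\le i$ or $j>i$''.

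Checking that $\A(p,2)$ is special biserial and admits no bands is routine; the step that needs genuine care is the index translation in the last paragraph — converting the strand data $(a,b)$ of a ``$V$-shaped'' walk correctly into $(i,j)$ and confirming that the surviving strings biject with the $\U_{i,j}$ and the $\V_i$ without repetition or omission (a convenient check being that both sides have cardinality $p^2+p$). Alternatively one can avoid string-algebra theory and argue directly: a representation in $\rep_K^{\inj}(\Q_p,I_2)$ is a flag $M_1\subseteq\cdots\subseteq M_p=V$ together with $N\in\End_K(V)$, $N^2=0$ (so $\im N\subseteq\ker N$), and one may decompose it by induction on $\dim_K V$, splitting off the $\V_a$ along a flag-compatible complement on which $N$ vanishes and then resolving each remaining two-dimensional Jordan block according to whether the flag line meeting it equals $\langle e_1\rangle$ or $\langle e_2\rangle$.
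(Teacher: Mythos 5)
Your proof is correct, but it does not follow the paper's route, because the paper has none: Lemma \ref{indec2nilp} is stated ``following [BoRe, Theorem 3.3]'' with no proof given, and where the paper does prove an analogous classification itself (Proposition \ref{indec3nilp}, for $x=3$ and $p=2$) it works by covering theory and an explicit knitting of the Auslander--Reiten quiver rather than by the structure theory of special biserial algebras. Your argument is a clean alternative: $K\Q_p/(\alpha^2)$ is indeed a gentle string algebra, and the band/string dichotomy reduces the lemma to a walk enumeration. Two points deserve tightening. First, in the ``no bands'' step, the assertion that $\alpha^{\pm1}$ occurs at most once in a reduced walk is not a consequence of $\alpha^2\in I_2$ alone --- one must also rule out a walk that leaves $p$ along $\alpha_{p-1}^{-1}$ and later returns; this follows because at every vertex $i<p$ there is exactly one incoming and one outgoing arrow, so a string cannot form a peak or a valley on the subquiver $1\to\cdots\to p$ and every walk there is monotone. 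You do make the tree observation, so the ingredient is present, but the two facts should be combined explicitly. Second, the index translation you flag as delicate is in fact consistent with the paper's conventions: the strand whose top generates the non-kernel line $\langle e_1\rangle$ (so $\alpha e_1=e_2\neq0$) starts at the source $j$ of the arrow $j\to i$, and the strand carrying $e_2=\alpha e_1$ reaches down to the target $i$; this matches both the dimension-vector formula $\dim(\U_{i,j})_k=\delta_{i\leq k}+\delta_{j\leq k}$ implicit in Theorem \ref{paraboliccase} and the homomorphism counts of Proposition \ref{dimhom}, and your cardinality check $p^2+p$ confirms there is no omission or repetition. What the string-algebra route buys is a conceptual explanation of representation-finiteness (no bands) for free; what the paper's preferred AR-quiver/covering method buys is the explicit homomorphism data (as in Figure \ref{fig2}) that the degeneration arguments of Section \ref{closures} rely on.
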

 An \textit{enhanced oriented link pattern} of type $(b_1,\punkte,b_p)$ is an oriented graph on the vertices $\{1,\punkte ,p\}$ together with a (possibly empty) set of of dots at each vertex, such that the sum of the numbers of sources, targets and dots at every vertex $i$ equals $b_i$. Clearly, an enhanced oriented link pattern of a fixed type is far from being unique. We denote an enhanced oriented link pattern as a sequence of tuples $(i_1,j_1)...(i_k,j_k)$, such that there are arrows $j_x \rightarrow i_x$ for alle $x$ and $j_1\leq j_2 \leq ...$.\\[1ex]
For example, an enhanced oriented link pattern of type $(3,2,6,2,6)$ is given by
\begin{center}{\small\begin{tikzpicture}[descr/.style={fill=white,inner sep=2.5pt}]
  \matrix (m) [matrix of math nodes, row sep=1.01em, column sep=1.5em, text height=1.5ex, text depth=0.25ex]
 {\bullet & \bullet & \ddddot{\bullet} & \bullet & \dddot{\bullet} \\
1 & 2 & 3 & 4 &5\\};
\path[->,font=\scriptsize]
(m-1-1) edge [bend left=60]  (m-1-3)
(m-1-3) edge [bend left=45]   (m-1-1)
(m-1-4) edge [bend left=45]   (m-1-2)
(m-1-1) edge [bend left=45]   (m-1-2)
(m-1-5) edge [loop]   (m-1-5)
(m-1-5) edge [bend left=45]   (m-1-4);
  \end{tikzpicture}.}\end{center} 

\begin{theorem}\label{paraboliccase}
There are natural bijections between 
\begin{enumerate}
\item $P$-orbits in $\N_n^{(2)}$,
\item  isomorphism classes in $\rep^{\inj}_{K}(\Q_p,I_2)$ of dimension vector $\dfp$,
\item matrices $N=(p_{i,j})_{i,j}\in \mathbf{N}^{p\times p}$, such that
$\sum_j (p_{i,j}+ p_{j,i}) \leq b_i$ for all $i\in\{1,\punkte,p\}$,
\item   and enhanced oriented link patterns of type $(b_1,\punkte,b_p)$.
\end{enumerate}
Moreover, if the isomorphism class of $M$ corresponds to a matrix $N$ under this bijection, the orbit $\Orb_M\subset R_{\dfp}^{\inj}(\Q_p,I_2)$ and the orbit $P.N\subset\N_n^{(2)}$ correspond to each other via the bijection $\Phi$ of Lemma \ref{bijection}.
\end{theorem}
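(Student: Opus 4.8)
The strategy is to establish the bijections one link at a time, with (2) serving as the hub. The equivalence of (1) and (2) is essentially free: it is exactly the content of Lemma~\ref{bijection}, whose bijection $\Phi$ already sends $P$-orbits in $\N_n^{(2)}$ to $\GL_{\dfp}$-orbits in $R_{\dfp}^{\inj}(\Q_p,I_2)$, and the latter are in bijection with isomorphism classes in $\rep^{\inj}_{K}(\Q_p,I_2)(\dfp)$ by the general correspondence recalled in Section~\ref{theory}. So the real work is the chain (2) $\leftrightarrow$ (3) $\leftrightarrow$ (4).

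For (2) $\leftrightarrow$ (3): by the Krull--Remak--Schmidt theorem every $M\in\rep^{\inj}_K(\Q_p,I_2)$ decomposes uniquely (up to order and isomorphism) into indecomposables, and Lemma~\ref{indec2nilp} lists these as the $\U_{i,j}$ (cases (1) and (2)) and the $\V_i$. Thus an isomorphism class is the same datum as a multiplicity function: for each ordered pair $(i,j)$ record how many copies of the indecomposable supported "from $i$ to $j$" occur, and for each vertex $i$ record the multiplicity of $\V_i$. Assembling the $\U$-multiplicities into a matrix $N=(p_{i,j})$ — with the convention from Lemma~\ref{indec2nilp} that $p_{i,j}$ with $j\le i$ uses type~(1) and $j>i$ uses type~(2) — gives a map from isomorphism classes to $\mathbf{N}^{p\times p}$. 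The point is to show the image is cut out exactly by the inequalities $\sum_j(p_{i,j}+p_{j,i})\le b_i$ and that the $\V_i$-multiplicities are then forced (hence redundant). First I would compute, for each indecomposable $\U_{i,j}$ and $\V_i$, its dimension vector in terms of the $d_k=b_1+\dots+b_k$; then the condition that $\dimv M=\dfp$ translates, after telescoping the differences $d_k-d_{k-1}=b_k$, precisely into: at vertex $k$, the number of indecomposable summands "passing through" $k$ with a $1$-dimensional contribution there, plus twice the number with a $2$-dimensional contribution, is controlled by the $b_k$'s. Working this out shows the $\V_i$-count at vertex $i$ is exactly $b_i-\sum_j(p_{i,j}+p_{j,i})$, which must be $\ge 0$ — giving the stated inequalities — and conversely any $N$ satisfying them yields a legitimate $M$ of the right dimension vector by taking the corresponding direct sum and padding with $\V_i$'s. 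This is the step I expect to be the main obstacle: not conceptually deep, but it requires bookkeeping care to check that the dimension-vector constraint at \emph{every} vertex $k$, not just the endpoints $i,j$, is automatically satisfied once one imposes the inequality only at vertex-level totals — i.e. that the "staircase" shapes of the $\U_{i,j}$ fit together without further obstruction.

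For (3) $\leftrightarrow$ (4): an enhanced oriented link pattern of type $(b_1,\dots,b_p)$ is, by definition, an oriented graph on $\{1,\dots,p\}$ (loops allowed) together with dots at vertices so that sources $+$ targets $+$ dots at vertex $i$ equals $b_i$. Reading off, for each ordered pair, the number of arrows $j\to i$ gives exactly a matrix $N=(p_{i,j})$, and the defining balance condition at vertex $i$ says that $b_i$ minus the number of arrow-ends at $i$ equals the number of dots, which is $\ge 0$; the number of arrow-ends at $i$ is $\sum_j(p_{i,j}+p_{j,i})$ (counting both arrows into $i$ and out of $i$, with a loop at $i$ contributing via $p_{i,i}$ appearing in both sums — one must fix the loop convention consistently with Lemma~\ref{indec2nilp}). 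Hence the inequalities in (3) are exactly the condition that the leftover dot-counts are non-negative, and the map $N\mapsto$ (arrows with multiplicities $p_{i,j}$, dots filling the slack) is the inverse. Finally, the "moreover" clause is immediate once the chain is in place: tracing an isomorphism class $M$ with matrix $N$ back through $\Phi$ is precisely the assertion of Lemma~\ref{bijection} that $\Phi(P.N)=\Orb_M$ — one only needs to verify that the matrix $N$ attached to $M$ by the decomposition above is the \emph{same} $N$ whose $P$-orbit $\Phi$ was built from, which follows by checking the claim on the generating indecomposables and extending additively over direct sums.
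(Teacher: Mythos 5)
Your plan is correct and follows essentially the same route as the paper, which defers the details to \cite[Theorem 3.4]{BoRe}: (1)$\leftrightarrow$(2) via Lemma \ref{bijection}, then Krull--Remak--Schmidt together with the classification of indecomposables in Lemma \ref{indec2nilp} to turn an isomorphism class into the multiplicity matrix $N$, with the dimension-vector constraint at each vertex forcing $n_i=b_i-\sum_j(p_{i,j}+p_{j,i})\geq 0$, and the link-pattern dictionary (arrows $j\to i$ with multiplicity $p_{i,j}$, dots for the $\V_i$) giving (3)$\leftrightarrow$(4). Your bookkeeping of the per-vertex dimension jumps, including the loop convention at $p_{i,i}$, is exactly the intended argument.
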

The proof is similar to the proof of \cite[Theorem 3.4]{BoRe}.
Note that the multiplicity of the indecomposable $\V_i$ is obtained as the number of dots at the vertex $i$ which we call ``fixed vertices''. The multiplicity of the indecomposable $\U_{i,j}$ is given as the number of arrows $j\rightarrow i$. 
We define $\eolp(X)$ to be the enhanced oriented link pattern corresponding to both the isomorphism class of \mbox{$X\in\rep^{\inj}_{K}(\Q_p,I_2)(\dfp)$} and the $P$-orbit of $X\in\N_n^{(2)}$. Furthermore, we say that a matrix as in Theorem \ref{paraboliccase} (3.) is \textit{in normal form} for the $P$-action. Then the set of matrices in $P$-normal form is defined to be $R_P$.\\[1ex]
An \textit{oriented link pattern} of size $n$ is an  enhanced oriented link pattern of type $(1,\punkte ,1)$. Thus, every vertex is incident with at most one arrow.
The concrete classification of the Borel-orbits is then given by the  oriented link patterns of size $n$ and is easily obtained from Theorem \ref{paraboliccase} (see, for the detailed proof, \cite[Theorem 3.4]{BoRe}). 
As before, we define $\olp(X)$ to be the oriented link pattern corresponding to both the isomorphism class of \mbox{$X\in\rep^{inj}_{K}(\Q_n,I_2)(\dfs)$} and the $B$-orbit of $X\in\N_n^{(2)}$.
\subsubsection[Interrelation between B- and P-orbits of 2-nilpotent matrices]{Interrelation between $B$-orbits and $P$-orbits in $\N_n^{(2)}$}
Our aim is to verify an algorithm in order to determine each $B$-orbit contained in a given $P$-orbit.\\[1ex]
The idea is the following: Since each $P$-orbit is represented by a matrix $N\in K^{p\times p}$ in normal form, we can show that all $B$-orbits contained in this $P$-orbit are (as $B$-orbits) represented by matrices, which are obtained by extending $N$ to matrices in $K^{n\times n}$ and thereby translating and interpreting the entries of $N$. In this way, we obtain the above mentioned algorithm and a precise classification.\\[1ex]
Let $N=(n_{i,j})_{i,j}\in K^{n\times n}$, then define its inner sum to be \[\textrm{sum}_{i,j}(N)\coloneqq\sum_{\begin{subarray}{l}
d_{i-1}< x\leq d_i \\ 
d_{j-1}< y\leq d_j \end{subarray}} n_{x,y}.\]
 Let $\textbf{B}\coloneqq(e_1,\punkte,e_n)$ be the basis of coordinate vectors of $K^n$.
\begin{proposition}\label{pconj}
 Two matrices $N$ and $N'$ in $R_{B}$ are $P$-conjugate if and only if $ \textrm{sum}_{i,j}(N)= \textrm{sum}_{i,j}(N')$ for $i,j\in\{1,\punkte, p\}$.
\end{proposition}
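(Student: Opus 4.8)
The plan is to pass through the representation-theoretic translation of Lemma~\ref{bijection} and Theorem~\ref{paraboliccase}, and to identify the invariant $\textrm{sum}_{i,j}$ as a complete invariant of the $P$-orbit in $\N_n^{(2)}$ by matching it, block by block, with the normal-form matrix $N_P \in \mathbf{N}^{p\times p}$ from Theorem~\ref{paraboliccase}~(3). First I would fix a matrix $N \in R_B$, which is in Borel-normal form, and regard it as a point of $\N_n^{(2)}$. Since $P \supseteq B$, the $P$-orbit $P.N$ decomposes into finitely many $B$-orbits, each represented by some matrix in $R_B$; by Theorem~\ref{paraboliccase} applied to $P$, the $P$-orbit corresponds to a unique matrix $N_P = (p_{i,j}) \in \mathbf{N}^{p\times p}$ in $P$-normal form, equivalently to an enhanced oriented link pattern of type $(b_1,\dots,b_p)$. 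The key claim is then: for $N \in R_B$, the entry $p_{i,j}$ of the associated $P$-normal-form matrix equals $\textrm{sum}_{i,j}(N)$ for $i \neq j$, and the diagonal entries (numbers of fixed vertices, i.e. multiplicities of $\V_i$) are likewise determined by the $\textrm{sum}_{i,i}(N)$ together with the block sizes. Granting this claim, two matrices $N, N' \in R_B$ are $P$-conjugate iff they have the same $P$-normal form iff $\textrm{sum}_{i,j}(N) = \textrm{sum}_{i,j}(N')$ for all $i,j \in \{1,\dots,p\}$, which is the assertion.

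To prove the claim I would argue via the representation $M^N$ attached to $N$ by Lemma~\ref{bijection}, living in $\rep^{\inj}_K(\Q_p,I_2)(\dfp)$. Decompose $M^N$ into indecomposables using Lemma~\ref{indec2nilp}: it is a direct sum of copies of the $\U_{i,j}$ and $\V_i$, and by Theorem~\ref{paraboliccase} the multiplicity of $\U_{i,j}$ equals $p_{i,j}$ (the number of arrows $j \to i$ in $\eolp$) and the multiplicity of $\V_i$ equals the number of dots at vertex $i$. Now I would compute, directly from the block form of $N$ with respect to the flag $0 \subset K^{b_1} \subset \cdots \subset K^n$, the rank of the composite maps encoding how $\im N$ meets the successive subspaces; concretely, the contribution of a summand $\U_{i,j}$ to the $(i,j)$-block-rank data is exactly one, and the quantity $\textrm{sum}_{i,j}(N)$ reads off precisely these multiplicities because $N$, being in Borel-normal form, is a $0/1$ matrix whose support is a partial permutation adapted to the coordinate basis $\mathbf{B}$. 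Thus $\textrm{sum}_{i,j}(N)$ counts the $\U_{i,j}$-summands, i.e. equals $p_{i,j}$; and since the block sizes $b_i$ are fixed and $\sum_j(p_{i,j} + p_{j,i})$ plus the number of dots at $i$ equals $b_i$, the diagonal/fixed-vertex data is recovered as well. Both implications of the Proposition follow: equal inner sums force identical decompositions of $M^N$ and $M^{N'}$, hence isomorphic representations, hence $\GL_{\dfp}$-equivalent points, hence (by $\Phi$) $P$-conjugate matrices; conversely $P$-conjugate matrices give isomorphic $M^N$, hence the same multiplicities, hence the same inner sums.

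The main obstacle I expect is the bookkeeping in the claim's forward direction: showing cleanly that $\textrm{sum}_{i,j}(N)$ literally equals the multiplicity of $\U_{i,j}$ in $M^N$, rather than merely being bounded by it or congruent to it. This requires a careful normal-form argument for matrices in $R_B$ — that each such $N$ can be taken with support a partial injective map on $\mathbf{B}$ with no ``collisions'' inside a single block pair $(i,j)$ — so that summing the entries over the $(i,j)$-block genuinely counts summands and does not overcount. Once that normal form is pinned down (it is implicit in the $B$-case of Theorem~\ref{paraboliccase}, via oriented link patterns of size $n$), the rest is a direct translation through $\Phi$ and Krull--Remak--Schmidt. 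The remaining verifications — that inner sums are $P$-conjugation invariants, which is clear since $\textrm{sum}_{i,j}$ only depends on the block-filtration that $P$ preserves, and that the diagonal data is forced by the off-diagonal data together with the $b_i$ — are routine.
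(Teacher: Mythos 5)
Your proof is correct, but it takes a genuinely different route from the paper's. The paper proves both directions by hand inside $\N_n^{(2)}$: for the ``only if'' direction it observes that any conjugating $S\in P$ permutes the coordinate basis within blocks and hence cannot change the block sums, and for the ``if'' direction it explicitly constructs a conjugating permutation matrix $S\in P$ by matching up, block by block, the fixed vertices, sources and targets of $\olp(N)$ and $\olp(N')$, then verifies $S^{-1}NS=N'$ column by column. You instead route everything through Lemma \ref{bijection} and Krull--Remak--Schmidt: since a matrix in $R_B$ is a $0/1$ partial-permutation matrix with at most one incidence per basis vector, the basis splits into $N$-stable pairs and singletons, each spanning a summand of $M^N$ as a $\Q_p$-representation, so the multiplicity of $\U_{i,j}$ is exactly $\textrm{sum}_{i,j}(N)$ and the $\V_i$-multiplicities are forced by the block sizes; $P$-conjugacy then becomes isomorphism of $M^N$ and $M^{N'}$, i.e.\ equality of all multiplicities. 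This is precisely the alternative the paper itself flags in the remark following the proof (``the description of the $P$-orbits can also be deduced directly from the bijection given in \ref{bijection}''): your argument is shorter and more conceptual, while the paper's explicit $S$ is what powers the subsequent algorithm for listing the $B$-orbits inside a given $P$-orbit. The one step you should not leave implicit is the direct-sum claim itself --- that the flag $0\subset K^{d_1}\subset\cdots\subset K^n$ restricts correctly to each pair/singleton so that the pair spanned by an arrow from block $j$ to block $i$ really gives $\U_{i,j}$ and not some other indecomposable --- but as you note this is exactly the $B$-to-$P$ coarsening of link patterns and is routine once written out.
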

\begin{proof} A matrix $S\in P$ with $S^{-1}\cdot N\cdot S=N'$ is induced by a permutation of $\textbf{B}$, say \[\sigma\textbf{B}\coloneqq(e_{\sigma(1)},\punkte, e_{\sigma(n)}),\] such that if $d_{i-1}< x\leq d_i$, then $d_{i-1} <\sigma(x) \leq d_i$ for all $i\in\{1,\punkte, p\}$.\\[1ex] 
Let $i$ and $j$ be two indices, such that $x\coloneqq \textrm{sum}_{i,j}(N)> \textrm{sum}_{i,j}(N')$ and assume there is a matrix $S\in P$ with $S^{-1}\cdot N\cdot S=N'$. Denote the corresponding non-zero entries of $N$ by $(i_s,j_s)$ for $1\leq s\leq x$; they fulfill $d_{i-1}< i_s\leq d_i$ and   $d_{j-1}< j_s\leq d_j$. Of course, $N\cdot e_{j_s}=e_{i_s}$ and due to $d_{i-1}<\sigma(i_s) \leq d_i$ and $d_{j-1}<\sigma(j_s) \leq d_j$, we obtain $x\leq \textrm{sum}_{i,j}(N')$, a contradiction.\\[1ex] 
Given $N$ and $N'$ in $R_{B}$ fulfilling $ \textrm{sum}_{i,j}(N)= \textrm{sum}_{i,j}(N')$ for $i,j\in\{1,\punkte, p\}$, we have to define a matrix $S\in P$ such that $S^{-1}\cdot N\cdot S =N'$. We, therefore, define a permutation $\sigma\in S_n$, such that the $i$-th column $S_{\cdot,i}$ of $S$ equals $e_{\sigma(i)}$. Without loss of generality we assume the oriented link patterns of $N$ and $N'$ to have $x$ arrows.\\[0.5ex]
First, we define $\sigma$ on fixed vertices.\\ 
Let $\Fa_{i}$  be the set of fixed vertices $f$ with $d_{i-1} < f\leq d_i$ in $\olp(N)$ and $\Fa'_{i}$ be the set of fixed vertices $f'$ with $d_{i-1} < f'\leq d_i$ in $\olp(N')$. Of course, the number of elements in $\Fa_i$ and $\Fa'_i$ coincides for all $i$. Given $\Fa_{i}=\{f_1,\punkte,f_{l_i}\}$ and $\Fa'_{i}=\{f'_1,\punkte,f'_{l_i}\}$, we define $\sigma(f'_k)=f_k$ for all $1\leq k\leq l_i$.\\[0.5ex]
Next, we define $\sigma$ on the source vertices of $\olp(N')$.Let $\Sa_i$ be the set of source vertices $s$ with $d_{i-1}< s\leq d_i$ in $\olp(N)$ and $\Sa'_i$ be the set of source vertices $s'$ with $d_{i-1}< s'\leq d_i$ in $\olp(N')$. We order them in the following way:\\[0.5ex]
 Let $(\Sa_{i})_{j}$ be the set of source vertices of arrows with targets $t$, such that $d_{j-1}< t\leq d_j$ in $\olp(N)$ and let $(\Sa'_{i})_{j}$ be the set of source vertices of arrows with targets $t'$, such that $d_{j-1}< t'\leq d_j$ in $\olp(N')$.
 Of course, the number of elements in $(\Sa_i)_j$ and $(\Sa'_i)_j$ coincides for $i,j\in\{1,\punkte, p\}$.\\
Given $(\Sa_{i})_j=\{s_1,\punkte,s_l\}$ and $(\Sa'_{i})_j=\{s'_1,\punkte,s'_l\}$, define $\sigma(s'_k)=s_k$ for all $k\in\{1,\punkte, l\}$.\\[0.5ex]
Finally, we define $\sigma$ on target vertices.\\
 Let $y'\in(\Sa'_i)_j$ be mapped to $y\in(\Sa_i)_j$ by $\sigma$. Let $x$ be the target of the arrow $y\rightarrow x$ in $\olp(N)$ and $x'$ be the target of the arrow $y'\rightarrow x'$ in $\olp(N')$. Then we define $\sigma(x')=x$. We have, thus, defined $\sigma$ on each vertex of the oriented link pattern and, therefore, on $S_n$. In the same way, we have defined the aforementioned basis $\sigma \textbf{B}=(e_{\sigma(i)})_{1\leq i\leq n}$.\\[1ex]
It now suffices to show $S^{-1}\cdot N\cdot S=N'$, that is, the representing matrix $M_{\sigma \textbf{B}}^{\sigma \textbf{B}}(l_{N})$ equals $N'$, here we denote by $l_N$ and $l_{N'}$ the induced linear maps.\\[0.5ex]
 If $i$ is a fixed vertex in $\olp(N')$, then $\sigma(i)$ is a fixed vertex in $\olp(N)$ and $N e_{\sigma(i)}=0$. Then the $i-$th column of $M_{\sigma \textbf{B}}^{\sigma \textbf{B}}(l_{N})$ as well as of $N'$ equals $0$.\\[0.5ex]
 If $i$ is a source vertex of an arrow in $\olp(N')$ with a target $j'$, then $\sigma(i)$ is a source vertex of an arrow in $\olp(N)$ with a target $j$. Thus, $N\cdot e_{\sigma(i)}=e_{j}$ and since $\sigma(j')=j$,  the $i-$th column of $M_{\sigma \textbf{B}}^{\sigma \textbf{B}}(l_{N})$ and of $N'$ coincide.\\[0.5ex]
If $i$ is a target vertex of an arrow in $\olp(N')$ with a source $j$, then $\sigma(i)$ is a target vertex of an arrow in $\olp(N)$ with a source $i'$. Thus, $N e_{\sigma(i)}=0$ and the $i-$th column of $N'$ equals $0$ as well.
\end{proof}
Note that the description of the $P$-orbits can also be deduced directly from the bijection given in \ref{bijection}. The proof of the theorem however gives an explicit conjugation matrix and therefore presents more details about the connection.\\[1ex]
We have proven an explicit description of the $P$-orbits and derive a natural algorithm to obtain each $B$-orbit contained in a given $P$-orbit. The interpretation in terms of oriented link patterns is quite easy.\\[1ex]
 Given an enhanced oriented link pattern of $k$ vertices, we construct oriented link patterns belonging to the $P$-orbit as follows:\\
 We draw $n$ vertices numbered by $1$, $2$ up to $n$, such that we mark the first $b_1$ vertices, then the vertices $b_1+1$ up to $b_1+b_2$ and so on. In this way, we obtain $n$ numbered vertices which are ordered in $p$ sets by the block sizes of the parabolic. Now all oriented link patterns have to be constructed, such that the number of arrows from the $j$-th tuple of vertices to the $i$-th tuple of vertices equals the number of arrows from $j$ to $i$ in the enhanced oriented link pattern. In this way, it becomes obvious why it is necessarily allowed to draw loops in an enhanced oriented link pattern.
\begin{example}
 Consider $n=4, p=2$ and the parabolic $P$ of block sizes $(3,1)$ with
\[R_{P}=\left\lbrace A:=\left( \begin{array}{ll}1 & 1 \\ 0 & 0  \end{array}\right) ,\left( \begin{array}{ll} 1 & 0 \\ 1 & 0  \end{array}\right), \left( \begin{array}{ll} 1 & 0 \\ 0 & 0  \end{array}\right), \left( \begin{array}{ll} 0 & 1 \\ 0 & 0  \end{array}\right), \left( \begin{array}{ll} 0 & 0 \\ 1 & 0\end{array}\right), \left( \begin{array}{ll} 0 & 0 \\ 0 & 0  \end{array}\right)\right\rbrace. \]
We discuss the $P$-orbit of $A$ with the enhanced oriented link pattern {\small\[\begin{tikzpicture}
  \matrix (m) [matrix of math nodes, row sep=0.1em, column sep=2.5em, text height=1.5ex, text depth=0.25ex]
 { \bullet & \bullet \\
1& 2\\};
\path[->,font=\scriptsize]
(m-1-2) edge [thick,bend right=20] (m-1-1)
(m-1-1) edge [thick,loop] (m-1-1);
  \end{tikzpicture}\]}
and express a system of representatives of the Borel-orbits contained in it. These are obtained from $\eolp(A)$:
\[\left( \begin{array}{llll}
0 & 0 & 0 & 0 \\ 
0 & 0 & 0 & 1 \\ 
1 & 0 & 0 & 0 \\ 
0 & 0 & 0 & 0
        \end{array}\right): \xygraph{ !{<0cm,0cm>;<1cm,0cm>:<0cm,1cm>::} 
!{(-0.8,0) }*+{\underset{1}\bullet}="1" 
!{(-0.275,0)}*+{\underset{2}\bullet}="2"
!{(0.275,0)}*+{\underset{3} \bullet}="3" 
!{(1.2,0) }*+{\underset{4}\bullet}="4"
"1":@/^0.7cm/"3" 
"4":@/_0.7cm/"2" 
 }\]
\[ \xygraph{ !{<0cm,0cm>;<1cm,0cm>:<0cm,1cm>::} 
!{(-0.8,0) }*+{\underset{1}\bullet}="1" 
!{(-0.275,0)}*+{\underset{2}\bullet}="2"
!{(0.275,0)}*+{\underset{3} \bullet}="3" 
!{(1.2,0) }*+{\underset{4}\bullet}="4"
"2":@/^0.55cm/"3" 
"4":@/_0.7cm/"1" 
 }:\left( \begin{array}{llll}
0 & 0 & 0 & 1 \\ 
0 & 0 & 0 & 0 \\ 
0 & 1 & 0 & 0 \\ 
0 & 0 & 0 & 0
        \end{array}\right), \left( \begin{array}{llll}
0 & 0 & 0 & 0 \\ 
1 & 0 & 0 & 0 \\ 
0 & 0 & 0 & 1 \\ 
0 & 0 & 0 & 0
        \end{array}\right):\xygraph{ !{<0cm,0cm>;<1cm,0cm>:<0cm,1cm>::} 
!{(-0.8,0) }*+{\underset{1}\bullet}="1" 
!{(-0.275,0)}*+{\underset{2}\bullet}="2"
!{(0.275,0)}*+{\underset{3} \bullet}="3" 
!{(1.2,0) }*+{\underset{4}\bullet}="4"
"1":@/^0.7cm/"2" 
"4":@/_0.7cm/"3" 
 }\]
\[\xygraph{ !{<0cm,0cm>;<1cm,0cm>:<0cm,1cm>::} 
!{(-0.8,0) }*+{\underset{1}\bullet}="1" 
!{(-0.275,0)}*+{\underset{2}\bullet}="2"
!{(0.275,0)}*+{\underset{3} \bullet}="3" 
!{(1.2,0) }*+{\underset{4}\bullet}="4"
"4":@/_0.7cm/"1" 
"3":@/_0.55cm/"2" 
 }:\left( \begin{array}{llll}
0 & 0 & 0 & 1 \\ 
0 & 0 & 1 & 0 \\ 
0 & 0 & 0 & 0 \\ 
0 & 0 & 0 & 0
        \end{array}\right),  \left( \begin{array}{llll}
0 & 1 & 0 & 0 \\ 
0 & 0 & 0 & 0 \\ 
0 & 0 & 0 & 1 \\ 
0 & 0 & 0 & 0
        \end{array}\right):\xygraph{ !{<0cm,0cm>;<1cm,0cm>:<0cm,1cm>::} 
!{(-0.8,0) }*+{\underset{1}\bullet}="1" 
!{(-0.275,0)}*+{\underset{2}\bullet}="2"
!{(0.275,0)}*+{\underset{3} \bullet}="3" 
!{(1.2,0) }*+{\underset{4}\bullet}="4"
"2":@/_0.7cm/"1" 
"4":@/_0.7cm/"3" 
 }\]
\[\xygraph{ !{<0cm,0cm>;<1cm,0cm>:<0cm,1cm>::} 
!{(-0.8,0) }*+{\underset{1}\bullet}="1" 
!{(-0.275,0)}*+{\underset{2}\bullet}="2"
!{(0.275,0)}*+{\underset{3} \bullet}="3" 
!{(1.2,0) }*+{\underset{4}\bullet}="4"
"3":@/_0.7cm/"1" 
"4":@/_0.7cm/"2" 
 }: \left( \begin{array}{llll}
0 & 0 & 1 & 0 \\ 
0 & 0 & 0 & 1 \\ 
0 & 0 & 0 & 0 \\ 
0 & 0 & 0 & 0
        \end{array}\right)\]
\end{example}
We denote the parabolic subgroup of block sizes $(i_1,\punkte,i_k)$ by $P_{i_1,\punkte,i_k}$.
\begin{example} Let $n=3$ and consider the actions of the Borel subgroup $B$, the parabolic subgroups $P_{2,1}$ (and $P_{1,2}$, which is the symmetric case) and the general linear group $\GL_3$ on the variety $\N_3^{(2)}$ of $2$-nilpotent matrices.\\[1ex]
Oriented link patterns representing the $B_3$-orbits in $\N_3^{(2)}$:
\begin{center}
{\small\begin{tabular}{|c|c|c|c|c|c|c|}
\hline 
\parbox[0pt][4.9em][c]{0cm}{}
 $\xygraph{ !{<0cm,0cm>;<0.75cm,0cm>:<0cm,0.75cm>::} 
!{(-0.6,0) }*+{\bullet}="1" 
!{(0,0) }*+{\bullet}="2" 
!{(0.6,0) }*+{\bullet}="3" 
 }$  &  $\xygraph{ !{<0cm,0cm>;<0.75cm,0cm>:<0cm,0.75cm>::} 
!{(-0.6,0) }*+{\bullet}="1" 
!{(0,0) }*+{\bullet}="2" 
!{(0.6,0) }*+{\bullet}="3" 
"1":@/^0.7cm/"3" 
 }$ & $\xygraph{ !{<0cm,0cm>;<0.75cm,0cm>:<0cm,0.75cm>::} 
!{(-0.6,0)}*+{\bullet}="1" 
!{(0,0)}*+{\bullet}="2" 
!{(0.6,0)}*+{\bullet}="3" 
"1":@/^0.7cm/"2" 
 }$ &  $ \xygraph{ !{<0cm,0cm>;<0.75cm,0cm>:<0cm,0.75cm>::} 
!{(-0.6,0) }*+{\bullet}="1" 
!{(0,0) }*+{\bullet}="2" 
!{(0.6,0) }*+{\bullet}="3" 
"2":@/^0.7cm/"3" 
 }$ 
 & $\xygraph{ !{<0cm,0cm>;<0.75cm,0cm>:<0cm,0.75cm>::} 
!{(-0.6,0) }*+{\bullet}="1" 
!{(0,0) }*+{\bullet}="2" 
!{(0.6,0) }*+{\bullet}="3" 
"2":@/_0.7cm/"1" 
 }$ &  $ \xygraph{ !{<0cm,0cm>;<0.75cm,0cm>:<0cm,0.75cm>::} 
!{(-0.6,0) }*+{\bullet}="1" 
!{(0,0) }*+{\bullet}="2" 
!{(0.6,0) }*+{\bullet}="3" 
"3":@/_0.7cm/"2" 
 }$ &
$\xygraph{ !{<0cm,0cm>;<0.75cm,0cm>:<0cm,0.75cm>::} 
!{(-0.6,0) }*+{\bullet}="1" 
!{(0,0) }*+{\bullet}="2" 
!{(0.6,0) }*+{\bullet}="3" 
"3":@/_0.7cm/"1"}$ \\[-3.6ex] \hline
\end{tabular}}
\end{center}
Enhanced oriented link patterns representing the $P_{2,1}$-orbits in $\N_3^{(2)}$ and the corresponding oriented link patterns:
\begin{center}
{\small\begin{tabular}{|c|c|c|c|c|c|c|}
 \hline
$ \xygraph{ !{<0cm,0cm>;<0.75cm,0cm>:<0cm,0.75cm>::} 
!{(-0.4,0.04) }*+{\ddot{\bullet}}="1" 
!{(0.4,0.04) }*+{\dot{\bullet}}="3" 
 }$ &  \multicolumn{2}{c|}{$\xygraph{ !{<0cm,0cm>;<0.75cm,0cm>:<0cm,0.75cm>::} 
!{(-0.4,0.04) }*+{\dot{\bullet}}="1" 
!{(0.4,0) }*+{\bullet}="3" 
"1":@/^0.7cm/"3" }$}    &   
 \multicolumn{2}{c|}{$ \xygraph{ !{<0cm,0cm>;<0.75cm,0cm>:<0cm,0.75cm>::} 
!{(-0.4,0.04) }*+{\dot{\bullet}}="1" 
!{(0.4,0) }*+{\bullet}="3" 
"3":@/_0.7cm/"1" }$}   &    \multicolumn{2}{c|}{$ \xygraph{ !{<0cm,0cm>;<0.75cm,0cm>:<0cm,0.75cm>::} 
!{(-0.4,0) }*+{\bullet}="1" 
!{(+0.4,0.04) }*+{\dot{\bullet}}="2" 
"1":@(ul,ur) "1"
}$} 
 \\ \hline
\parbox[0pt][4.9em][c]{0cm}{}
 $\xygraph{ !{<0cm,0cm>;<0.75cm,0cm>:<0cm,0.75cm>::} 
!{(-0.5,0) }*+{\bullet}="1" 
!{(0,0) }*+{\bullet}="2" 
!{(0.5,0) }*+{\bullet}="3" 
 }$  &   $\xygraph{ !{<0cm,0cm>;<0.75cm,0cm>:<0cm,0.75cm>::} 
!{(-0.5,0) }*+{\bullet}="1" 
!{(0,0) }*+{\bullet}="2" 
!{(0.5,0) }*+{\bullet}="3" 
"1":@/^0.7cm/"3" 
 }$ &  $ \xygraph{ !{<0cm,0cm>;<0.75cm,0cm>:<0cm,0.75cm>::} 
!{(-0.5,0) }*+{\bullet}="1" 
!{(0,0) }*+{\bullet}="2" 
!{(0.5,0) }*+{\bullet}="3" 
"2":@/^0.7cm/"3" 
 }$  & $\xygraph{ !{<0cm,0cm>;<0.75cm,0cm>:<0cm,0.75cm>::} 
!{(-0.5,0) }*+{\bullet}="1" 
!{(0,0) }*+{\bullet}="2" 
!{(0.5,0) }*+{\bullet}="3" 
"3":@/_0.7cm/"1" 
 }$
 &  $\xygraph{ !{<0cm,0cm>;<0.75cm,0cm>:<0cm,0.75cm>::} 
!{(-0.5,0) }*+{\bullet}="1" 
!{(0,0) }*+{\bullet}="2" 
!{(0.5,0) }*+{\bullet}="3" 
"3":@/_0.7cm/"2" 
 }$ &  $\xygraph{ !{<0cm,0cm>;<0.75cm,0cm>:<0cm,0.75cm>::} 
!{(-0.5,0)}*+{\bullet}="1" 
!{(0,0)}*+{\bullet}="2" 
!{(0.5,0)}*+{\bullet}="3" 
"1":@/^0.7cm/"2" 
 }$ & 
 $\xygraph{ !{<0cm,0cm>;<0.75cm,0cm>:<0cm,0.75cm>::} 
!{(-0.5,0) }*+{\bullet}="1" 
!{(0,0) }*+{\bullet}="2" 
!{(0.5,0) }*+{\bullet}="3" 
"2":@/_0.7cm/"1" 
 }$ \\[-3.6ex]\hline
\end{tabular}}
\end{center}
\vspace{0.5ex}
Enhanced oriented link patterns representing the $\GL_3$-orbits in $\N_3^{(2)}$ and the corresponding oriented link patterns:
\begin{center}
{\small\begin{tabular}{|c|c|c|c|c|c|c|}\hline
$\xygraph{ !{<0cm,0cm>;<0.75cm,0cm>:<0cm,0.75cm>::} 
!{(0,0.04) }*+{\dddot{\bullet}}="1" 
 }$ &  \multicolumn{6}{c|}{$\xygraph{ !{<0cm,0cm>;<0.75cm,0cm>:<0cm,0.75cm>::} 
!{(0,0) }*+{\dot{\bullet}}="1" 
"1":@(ul,ur) "1"
 }$} 
 \\ \hline\parbox[0pt][4.9em][c]{0cm}{}
$\xygraph{ !{<0cm,0cm>;<0.75cm,0cm>:<0cm,0.75cm>::} 
!{(-0.5,0) }*+{\bullet}="1" 
!{(0,0) }*+{\bullet}="2" 
!{(0.5,0) }*+{\bullet}="3" 
 }$  &   $\xygraph{ !{<0cm,0cm>;<0.75cm,0cm>:<0cm,0.75cm>::} 
!{(-0.5,0) }*+{\bullet}="1" 
!{(0,0) }*+{\bullet}="2" 
!{(0.5,0) }*+{\bullet}="3" 
"1":@/^0.7cm/"3" 
 }$ & $\xygraph{ !{<0cm,0cm>;<0.75cm,0cm>:<0cm,0.75cm>::} 
!{(-0.5,0)}*+{\bullet}="1" 
!{(0,0)}*+{\bullet}="2" 
!{(0.5,0)}*+{\bullet}="3" 
"1":@/^0.7cm/"2" 
 }$   & $\xygraph{ !{<0cm,0cm>;<0.75cm,0cm>:<0cm,0.75cm>::} 
!{(-0.5,0) }*+{\bullet}="1" 
!{(0,0) }*+{\bullet}="2" 
!{(0.5,0) }*+{\bullet}="3" 
"3":@/_0.7cm/"1" 
 }$
 &   $\xygraph{ !{<0cm,0cm>;<0.75cm,0cm>:<0cm,0.75cm>::} 
!{(-0.5,0) }*+{\bullet}="1" 
!{(0,0) }*+{\bullet}="2" 
!{(0.5,0) }*+{\bullet}="3" 
"2":@/_0.7cm/"1" 
 }$&  $ \xygraph{ !{<0cm,0cm>;<0.75cm,0cm>:<0cm,0.75cm>::} 
!{(-0.5,0) }*+{\bullet}="1" 
!{(0,0) }*+{\bullet}="2" 
!{(0.5,0) }*+{\bullet}="3" 
"2":@/^0.7cm/"3" 
 }$ & 
$\xygraph{ !{<0cm,0cm>;<0.75cm,0cm>:<0cm,0.75cm>::} 
!{(-0.5,0) }*+{\bullet}="1" 
!{(0,0) }*+{\bullet}="2" 
!{(0.5,0) }*+{\bullet}="3" 
"3":@/_0.7cm/"2" 
 }$ \\[-3.8ex] \hline

\end{tabular}}\end{center} 
\end{example}
\subsubsection[U-orbits of 2-nilpotent matrices and labelled oriented link patterns]{$U$-orbits in $\N_n^{(2)}$ and labelled oriented link patterns}
For completeness, we discuss the orbits of the unipotent subgroup $U$ in $\N_n^{(2)}$ briefly. The action is of infinite type, but the orbits can be rederived from the classification of the $B$-orbits in Theorem \ref{paraboliccase}.\\[1ex]
 A labelled oriented link pattern of size $n$ is defined to be a tuple $\olp_{\lambda}\coloneqq(\olp,\lambda)$ where $\olp$ is an oriented link pattern of size $n$ and $\lambda\in (K^*)^s$, such that the arrow $j_k\rightarrow i_k$ is labelled by $\lambda_k$; here $s$ equals the number of arrows in $\olp$.
We can illustrate the labelled oriented link pattern given by $\lambda=(3,6,1)$ and the oriented link pattern $(3,1)(5,6)(2,7)$:
\begin{center}{\small\begin{tikzpicture}[descr/.style={fill=white,inner sep=2.5pt}]
  \matrix (m) [matrix of math nodes, row sep=0.01em, column sep=1.5em, text height=1.5ex, text depth=0.25ex]
 {\bullet & \bullet & \bullet & \bullet & \bullet & \bullet & \bullet\\
1 & 2 & 3 & 4 & 5 & 6 & 7\\};
\path[->,font=\scriptsize]
(m-1-1) edge [bend left=60] node[descr] {$[3]$} (m-1-3)
(m-1-6) edge [bend right=80] node[descr] {$[6]$}  (m-1-5)
(m-1-7) edge [bend right=40] node[descr] {$[1]$}  (m-1-2);
  \end{tikzpicture}.}\end{center} 

Given a labelled oriented link pattern $\olp_{\lambda}$  of size $n$, we can define the matrix $N(\olp_{\lambda})\in \N_n^{(2)}$ by 
\[N(\olp_{\lambda})_{i,j}= \left\{ \begin{array}{ll} \lambda_k, & \hbox{$ \textrm{if}~ i=i_k~ \textrm{and}~j=j_k$;} \\ 
0, & \hbox{$ \textrm{otherwise}$.} \end{array} \right.\]
Denote furthermore $N(\olp)\coloneqq N(\olp_{(1,\punkte,1)})$.
\begin{lemma}\label{classU}
 There are natural bijections between
\begin{enumerate}
\item  $U$-orbits in $\N_n^{(2)}$,
 \item matrices $N(\olp_{\lambda})$ where $\olp_{\lambda}$ is a labelled oriented link pattern of size $n$ and
\item labelled oriented link patterns of size $n$.
\end{enumerate}
\end{lemma}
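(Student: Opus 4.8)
The plan is to reduce everything to the classification of $B$-orbits already obtained in Theorem \ref{paraboliccase} (in the guise of oriented link patterns of size $n$) and then to keep track of the extra scaling data that $U$, being smaller than $B$, can no longer absorb. The equivalence of (2) and (3) is immediate from the definition of $N(\olp_\lambda)$: the map $\olp_\lambda \mapsto N(\olp_\lambda)$ is injective because the positions $(i_k,j_k)$ of the nonzero entries recover the underlying oriented link pattern $\olp$ and the values $\lambda_k \in K^*$ at those positions recover the label tuple; it is surjective onto the indicated set of matrices by construction. So the content is the bijection between (1) and (3).

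First I would observe that every matrix $N \in \N_n^{(2)}$ is $U$-conjugate to one of the form $N(\olp_\lambda)$. Indeed, $N$ is in particular $B$-conjugate to some $N(\olp)$ by Theorem \ref{paraboliccase} applied to the Borel (the $B$-orbits are classified by oriented link patterns of size $n$, with representatives having a single $1$ in each relevant position). Writing a conjugating element of $B$ as $t u$ with $t$ a diagonal torus element and $u \in U$, conjugation by $t$ rescales the entry in position $(i_k,j_k)$ by $t_{i_k}/t_{j_k} \in K^*$; hence $u N u^{-1}$ already has the shape $N(\olp_\lambda)$ for a suitable label vector $\lambda \in (K^*)^s$. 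This shows the map from (3) to $U$-orbits is surjective.

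For injectivity I would show that $N(\olp_\lambda)$ and $N(\olp'_{\lambda'})$ are $U$-conjugate only if $\olp = \olp'$ and $\lambda = \lambda'$. The equality $\olp = \olp'$ follows because $U \subseteq B$, so $U$-conjugate matrices are $B$-conjugate, and $B$-conjugacy already forces the underlying oriented link patterns to agree by Theorem \ref{paraboliccase}. Given $\olp = \olp'$, suppose $u N(\olp_\lambda) u^{-1} = N(\olp_{\lambda'})$ with $u \in U$ upper unitriangular. Here I would argue directly with the structure of an oriented link pattern: since every vertex is incident to at most one arrow, the source columns $j_k$ are distinct, the target rows $i_k$ are distinct, and a source index is never also a target index. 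Reading off the $j_k$-th column of the matrix equation $u N(\olp_\lambda) = N(\olp_{\lambda'}) u$ and using that the only nonzero entry of column $j_k$ of each $N$ is $\lambda_k$ (resp.\ $\lambda'_k$) in row $i_k$, one gets $\lambda_k \, u_{\cdot, i_k} = \lambda'_k \, u_{\cdot, i_k}$ after comparing the relevant coordinates, whence $\lambda_k = \lambda'_k$ because $u_{i_k,i_k}=1$.

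The main obstacle is the bookkeeping in this last step: one must be careful that the entries of $u$ appearing when multiplying out $u N(\olp_\lambda)$ versus $N(\olp_{\lambda'}) u$ do not introduce hidden relations that would allow $\lambda$ and $\lambda'$ to differ — in other words, that the "disjointness" of sources and targets in an oriented link pattern really does decouple the labels. I would handle this by choosing, for each arrow $j_k \to i_k$, the matrix coefficient in position $(i_k, j_k)$ of the identity $u N(\olp_\lambda) u^{-1} = N(\olp_{\lambda'})$; since $u$ is unitriangular and $i_k \ne j_\ell$ for all $k,\ell$, the diagonal contributions dominate and yield exactly $\lambda_k = \lambda'_k$. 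Once both the underlying pattern and all labels agree, the two matrices are literally equal, so the map in (3) is injective on $U$-orbits, completing the three-way bijection.
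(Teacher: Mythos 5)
Your argument is correct and follows the same route as the paper, which simply declares the bijection between (2) and (3) immediate and the bijection between (1) and (2) a direct consequence of Theorem \ref{paraboliccase}; your surjectivity step (factoring a $B$-conjugator as $tu$ and absorbing the torus rescaling into the labels) and your injectivity step (comparing the $(i_k,j_k)$ entries of $uN(\olp_\lambda)=N(\olp_{\lambda'})u$, using that in an oriented link pattern sources and targets are disjoint so column $j_k$ of $N(\olp_\lambda)$ and row $i_k$ of $N(\olp_{\lambda'})$ each have a single nonzero entry) are exactly the details the paper leaves to the reader. No gaps.
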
 
\begin{proof}
 The bijection between 2. and 3. is immediately clear. The bijection between 1. and 2. is a direct consequence of Theorem \ref{paraboliccase}.
\end{proof}
Each $U$-orbit is closed itself, see for example \cite{Kr}.

\section[P-orbit closures in the variety of 2-nilpotent matrices]{$P$-orbit closures in $\N_n^{(2)}$}\label{closures}
Given representations $M,M'\in \rep_{K}(\Q_p,I_2)$, we set $[M, M'] \coloneqq \dim_{K} \Hom(M, M' )$.
Proposition \ref{dimhom} can be found in \cite[Lemma 4.2]{BoRe}; here $\delta_{x\leq y} \coloneqq 1$ if $x\leq y$ and $\delta_{x\leq y} \coloneqq 0$ otherwise.
\begin{proposition}\label{dimhom}
 Let $i, j, k, l \in \{1,\punkte, p\}$. Then
\begin{enumerate}
\item $[\V_k , \V_i ] = [\V_k , \U_{i,j} ] =\delta_{i\leq k}$,
\item $[\U_{k,l} , \V_i ] = \delta_{i\leq l}$,
\item $[\U_{k,l} , \U_{i,j} ] = \delta_{i\leq l} + \delta_{j\leq l} \cdot \delta_{i\leq k}$,
\end{enumerate}
\end{proposition}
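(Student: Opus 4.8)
The plan is to compute each of the three Hom-spaces directly from the explicit normal forms of the indecomposables $\V_i$ and $\U_{i,j}$ given in Lemma \ref{indec2nilp}, since a morphism of representations is just a compatible tuple of linear maps and all the spaces involved are at most $2$-dimensional at each vertex. First I would set up notation: write $\V_i$ as the representation supported on vertices $i, i+1, \dots, p$ with each space $K$, all connecting maps the identity, and the loop acting as $0$; write $\U_{i,j}$ (in both cases $j \le i$ and $j > i$) as having support starting at $\min(i,j)$, being $1$-dimensional from there up to the point where the support of the ``second coordinate'' begins, then $2$-dimensional up to vertex $p$, with the loop $\alpha$ acting on $K^2$ by $e_1 \mapsto e_2 \mapsto 0$, and the crucial embedding $K \hookrightarrow K^2$ being $e_1$ (case $j\le i$) or $e_2$ (case $j>i$).

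The core of the argument is the following observation: because all the horizontal maps $\epsilon_t$ are injective (these are representations in $\rep^{\inj}_K$), a morphism $f\colon M \to M'$ is completely determined by its component $f_p\colon M_p \to M'_p$ at the last vertex, and conversely a linear map $f_p$ extends to a morphism if and only if (a) it intertwines the loop maps, i.e.\ $f_p N = N' f_p$ where $N, N'$ are the nilpotent operators at vertex $p$, and (b) it carries the image of the ``incoming'' subspace of $M_p$ coming from vertex $t$ into the corresponding subspace of $M'_p$ for every $t$ — equivalently, the restriction of $f_p$ to each $M_t$ (viewed inside $M_p$) lands inside $M'_t$ (viewed inside $M'_p$). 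So each $[X,Y]$ becomes a dimension count of a space of $1\times 1$, $1\times 2$, $2\times 1$ or $2\times 2$ matrices subject to one or two linear constraints, and the $\delta$-symbols simply record which of those constraints are vacuous — a constraint ``$M_t \subseteq M'_t$'' is automatically satisfied (contributing nothing) exactly when the source indexing inequality, such as $i \le k$ or $j \le l$, holds.

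Concretely: for item (1), $\Hom(\V_k, -)$ from the simple-socle representation $\V_k$ forces $f_p$ to be a scalar multiple of the inclusion of the socle line into the socle of the target, which is nonzero and unique up to scalar precisely when that socle line of $\V_k$ sits inside the target, i.e.\ when $i \le k$; both $[\V_k,\V_i]$ and $[\V_k, \U_{i,j}]$ reduce to the same count since the socle of $\U_{i,j}$ is the line $\langle e_2\rangle$ which behaves like the socle of $\V_i$. For item (2), $\Hom(\U_{k,l}, \V_i)$: a map out of $\U_{k,l}$ must kill $e_1$ at vertex $p$ (since $N' = 0$ on $\V_i$ but $N e_1 = e_2 \ne 0$ on $\U_{k,l}$, the image of $e_1$ must be in $\ker N' = M'_p$ and also $f_p(e_2) = N' f_p(e_1) = 0$, forcing $f_p$ to factor through the top $K$ of $\U_{k,l}$ and through $e_2 \mapsto 0$), so $f$ is determined by a scalar on the line $\langle e_1\rangle$, nonzero iff that line's vertex-of-origin is $\ge$ the support start of $\V_i$, i.e.\ iff $i \le l$. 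Item (3) is the genuinely bookkeeping-heavy case: a map $\U_{k,l} \to \U_{i,j}$ is a $2\times 2$ matrix at vertex $p$ commuting with the Jordan block, hence upper-triangular Toeplitz $\begin{pmatrix} a & b \\ 0 & a\end{pmatrix}$, giving at most a $2$-dimensional space; the parameter $b$ survives iff the ``long'' embedded line $\langle e_1 \rangle$ of the source is allowed to map into the socle $\langle e_2\rangle$ of the target, which needs both that the source's $e_1$-line originates late enough ($j \le l$, say, after matching conventions) and that the target has room ($i \le k$) — this is the product $\delta_{i\le k}\cdot\delta_{j\le l}$ — while the parameter $a$ survives iff $i \le l$, matching the two support-start comparisons.

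The main obstacle I expect is not any single computation but getting the index conventions exactly right: Lemma \ref{indec2nilp} parametrizes $\U_{i,j}$ by an ordered pair with $i$ recording the ``target'' vertex (where the $2$-dimensional part behaves like a copy carrying a relation) and $j$ the ``source'' vertex where the extra line is born, and these play asymmetric roles in cases (1) ($j \le i$) and (2) ($j > i$); I must track which of $i,j$ controls the start of support versus the start of the $K^2$-part in each case, and verify that the resulting inequalities collapse uniformly to the clean formula stated, independent of the case split. Once the dictionary ``embedded line originating at vertex $t$ maps into a representation $Y$ iff $Y$ is supported at $t$, i.e.\ iff the start-index of $Y \le t$'' is pinned down, each $\delta$ in the statement is read off mechanically, and the short computations with the Jordan block of size $2$ finish the proof.
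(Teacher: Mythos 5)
The paper itself offers no proof of this proposition — it is quoted verbatim from \cite[Lemma 4.2]{BoRe} — so there is no internal argument to compare against. Your direct computation from the normal forms of Lemma \ref{indec2nilp} is the standard argument, and your key reduction is correct: since all $\epsilon_t$ are injective, a morphism is determined by its component $f_p$ at the last vertex, subject to (a) intertwining the loop operators and (b) mapping the subspace of $M_p$ born at vertex $t$ into the corresponding subspace of $M'_p$, for every $t$. Carried out with the convention that $\U_{k,l}$ has its generator line $\langle e_1\rangle$ appearing at the source vertex $l$ and its socle line $\langle e_2\rangle$ appearing at the target vertex $k$, this yields all three formulas, and your treatment of items (1) and (2) is accurate.

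One slip in case (3), which you partly anticipate when you flag the convention-matching as the delicate point: with $\alpha e_1=e_2$, the two-dimensional space of intertwiners consists of $e_1\mapsto a e_1+b e_2$, $e_2\mapsto a e_2$. The off-diagonal parameter $b$ (generator of the source $\mapsto$ socle of the target, $e_2\mapsto 0$) is constrained only at vertex $l$, where it requires the target's socle to already be present, i.e.\ $i\le l$; it therefore contributes the summand $\delta_{i\le l}$, not the product. It is the scalar parameter $a$ that must respect both lines, needing $j\le l$ (so that $e_1$ of the target exists at vertex $l$) and $i\le k$ (so that $e_2$ of the target exists at vertex $k$), contributing $\delta_{j\le l}\cdot\delta_{i\le k}$. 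You have these two attributions swapped. The total dimension — and hence the stated formula — is unaffected, but the swap would matter in any later argument that needs to know which morphisms are the nilpotent ones in $\End$ or which factor through $\V_i$, so it is worth correcting before writing the proof out in full.
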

These dimensions are linked with (enhanced) oriented link patterns as follows (see \cite{BoRe}).
\begin{proposition}\label{combihom}
 Let $M\in \rep^{\inj}_{K}(\Q_p,I_2)(\dfp)$ and let $i, j, k, l \in \{1,\punkte, p\}$. Then by considering $X:=\eolp(M)$:\\[1ex]
 1. $a_k(M)\coloneqq[\V_k,M]= \sharp\{ fixed~vertices~\leq k~{\rm in}~X\}~+~\sharp\{targets~of~arrows~\leq k~{\rm in}~X\}$, \\[1ex]
2. $b_{k,l}(M)\coloneqq[\U_{k,l},M]= a_l(M) + \sharp\{ \textrm{arrows~with~source}~\leq l~  \textrm{and~target}~\leq k~{\rm in}~X\}$,\\[1ex]
3. $\overline{a_i}(M)\coloneqq[M,\V_i]= \sharp\{ \textrm{fixed~vertices}~\geq i~{\rm in}~X\}~+~\sharp\{ \textrm{sources~of~arrows}~\geq i~{\rm in}~X\}$,\\[1ex]
4. $\overline{b_{i,j}}(M)\coloneqq[M,\U_{i,j}]= \overline{a_i}(M) + \sharp\{ \textrm{arrows~with~source}~\geq j~  \textrm{and~target}~\geq i~{\rm in}~X\}$.
\end{proposition}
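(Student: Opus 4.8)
The plan is to reduce everything to additivity of $\Hom$ over direct sums, combined with the explicit values computed in Proposition \ref{dimhom}. First I would invoke the Krull--Remak--Schmidt theorem: since $M\in\rep^{\inj}_{K}(\Q_p,I_2)(\dfp)$ and the subcategory $\rep^{\inj}_{K}(\Q_p,I_2)$ is closed under direct summands (a block-diagonal linear map is injective if and only if each of its blocks is), Lemma \ref{indec2nilp} yields a decomposition $M\cong\bigoplus_i\V_i^{c_i}\oplus\bigoplus_{i,j}\U_{i,j}^{m_{i,j}}$, unique up to isomorphism. By the remark following Theorem \ref{paraboliccase}, $c_i$ is the number of dots (fixed vertices) at vertex $i$ in $X=\eolp(M)$, and $m_{i,j}$ is the number of arrows $j\to i$ in $X$.

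For part 1 I would expand $[\V_k,M]=\sum_i c_i[\V_k,\V_i]+\sum_{i,j}m_{i,j}[\V_k,\U_{i,j}]$ and substitute $[\V_k,\V_i]=[\V_k,\U_{i,j}]=\delta_{i\leq k}$ from Proposition \ref{dimhom}(1). This gives $\sum_{i\leq k}c_i+\sum_{i\leq k}\sum_j m_{i,j}$; the first sum is the number of fixed vertices $\leq k$ in $X$, and the second counts exactly the arrows of $X$ whose target $i$ satisfies $i\leq k$, i.e. the number of targets of arrows $\leq k$. For part 2 the same expansion together with Proposition \ref{dimhom}(2),(3) gives $[\U_{k,l},M]=\sum_i c_i\delta_{i\leq l}+\sum_{i,j}m_{i,j}(\delta_{i\leq l}+\delta_{j\leq l}\delta_{i\leq k})$; grouping the terms carrying $\delta_{i\leq l}$ reconstructs $a_l(M)$ by part 1, and the leftover sum $\sum_{i,j}m_{i,j}\delta_{j\leq l}\delta_{i\leq k}$ counts arrows of $X$ with source $\leq l$ and target $\leq k$.

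Parts 3 and 4 are handled symmetrically, now with $M$ in the first argument. Using Proposition \ref{dimhom}(1),(2) one gets $[M,\V_i]=\sum_k c_k[\V_k,\V_i]+\sum_{k,l}m_{k,l}[\U_{k,l},\V_i]=\sum_{k\geq i}c_k+\sum_{k,l}m_{k,l}\delta_{i\leq l}$, which is the number of fixed vertices $\geq i$ plus the number of arrows whose source $l$ satisfies $l\geq i$. Similarly $[M,\U_{i,j}]=\sum_k c_k\delta_{i\leq k}+\sum_{k,l}m_{k,l}(\delta_{i\leq l}+\delta_{j\leq l}\delta_{i\leq k})$, where the $\delta_{i\leq l}$-terms together give $\overline{a_i}(M)$ by part 3, and the remaining $\sum_{k,l}m_{k,l}\delta_{j\leq l}\delta_{i\leq k}$ is the number of arrows with source $\geq j$ and target $\geq i$.

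The only genuinely delicate point is the bookkeeping: one must keep the convention that $\U_{i,j}$ encodes an arrow $j\to i$ (source $j$, target $i$) consistent throughout, so that the Kronecker deltas in Proposition \ref{dimhom} translate correctly into the intended conditions ``source (resp.\ target) $\leq k$'' and ``source (resp.\ target) $\geq i$'' on $X$; and one must note the closure of $\rep^{\inj}_{K}(\Q_p,I_2)$ under direct summands, so that Lemma \ref{indec2nilp} genuinely applies to the indecomposable summands of $M$. Beyond that, every step is a routine substitution, so I do not expect any serious obstacle.
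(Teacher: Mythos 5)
Your proposal is correct and is exactly the argument the paper intends: the paper gives no written proof here, deferring to \cite{BoRe}, but the statement is meant to follow from the Krull--Remak--Schmidt decomposition of $M$ into the indecomposables of Lemma \ref{indec2nilp}, additivity of $\Hom$ over direct sums, and the values in Proposition \ref{dimhom}, with multiplicities read off as dots and arrows of $\eolp(M)$ as in the remark after Theorem \ref{paraboliccase}. Your bookkeeping of the Kronecker deltas (source $j$, target $i$ for $\U_{i,j}$) is consistent with the paper's conventions, so there is nothing to add.
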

For two representations $M=\bigoplus_{i,j=1}^p\U_{i,j}^{m_{i,j}}\oplus \bigoplus_{i=1}^p\V_i^{n_i}$ and $M'=\bigoplus_{i,j=1}^p\U_{i,j}^{m'_{i,j}}\oplus \bigoplus_{i=1}^p\V_i^{n'_i}$ in $\rep^{\inj}_{K}(\Q_p,I_2)(\dfp)$, we obtain
\[[M,M']=\sum_{i,j=1}^p m_{i,j}b_{i,j}(M')+ \sum_{k=1}^p n_k a_k(M')=\sum_{i,j=1}^p m'_{i,j}\overline{b_{i,j}}(M)+ \sum_{k=1}^p n'_k \overline{a_k}(M).\]
Let $N\in\N_n^{(2)}$ be a $2$-nilpotent matrix that corresponds to the representation $M$ via the bijection of Lemma \ref{bijection}.
\begin{proposition}\label{stabpar}
\[\dim \overline{P.N}=\dim P.N=\sum\limits_{i=1}^p \sum\limits_{x=1}^i (b_i\cdot b_x)-\sum\limits_{i,j=1}^p m_{i,j}b_{i,j}(N)-\sum\limits_{i=1}^p n_i a_i(N). \]
\end{proposition}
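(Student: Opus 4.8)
The plan is to compute $\dim P.N$ via the orbit–stabilizer theorem in the representation-theoretic picture, using the bijection $\Phi$ of Lemma \ref{bijection}, which preserves dimensions of stabilizers. So let $M = M^N$ be the representation in $\rep^{\inj}_K(\Q_p,I_2)(\dfp)$ corresponding to $N$. Since $\dim \overline{P.N} = \dim P.N$ (orbits are locally closed, hence open in their closure, of the same dimension), it suffices to identify $\dim P.N$. By Lemma \ref{bijection} the orbit $\Orb_M \subset R^{\inj}_{\dfp}(\Q_p,I_2)$ has the same codimension (inside the relevant ambient space) and the same stabilizer dimension as $P.N$ inside $\N_n^{(2)}$; concretely $\dim P.N = \dim P - \dim \mathrm{Stab}_P(N)$, and $\dim \mathrm{Stab}_P(N) = \dim \mathrm{Stab}_{\GL_{\dfp}}(M)$.

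The first step is to compute $\dim P = \sum_{i=1}^p \sum_{x=1}^i b_i b_x$: this is just the dimension of the space of block-upper-triangular matrices with block sizes $(b_1,\dots,b_p)$, since the $(i,x)$-block contributes $b_i b_x$ to $\dim P$ exactly when $x \le i$ (block below-or-on the diagonal in the chosen convention). The second, and main, step is to show
\[
\dim \mathrm{Stab}_{\GL_{\dfp}}(M) = [M,M] = \sum_{i,j=1}^p m_{i,j}\,b_{i,j}(N) + \sum_{i=1}^p n_i\,a_i(N).
\]
The right-hand equality is exactly the formula displayed just before the proposition, applied with $M' = M$ and using Proposition \ref{combihom} to rewrite $b_{i,j}(M)$ and $a_k(M)$ combinatorially in terms of $\eolp(N)$; so the crux is the first equality $\dim \mathrm{Stab}_{\GL_{\dfp}}(M) = \dim_K \mathrm{End}(M)$. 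For a representation of a quiver with relations, the stabilizer of the point $m \in R_{\dfp}(\Q_p,I_2)$ under $\GL_{\dfp}$ is precisely the group of units of $\mathrm{End}(M)$, which is a dense open (nonempty, as it contains $\mathrm{id}$) subset of the affine space $\mathrm{End}(M)$; hence the stabilizer is an open subvariety of a vector space of dimension $[M,M]$, so $\dim \mathrm{Stab}_{\GL_{\dfp}}(M) = [M,M]$. One should note $M$ lies in the \emph{injective} locus, so the ambient group is all of $\GL_{\dfp}$ acting on an open set, and nothing is lost in passing between $R^{\inj}$ and the $P$-action.

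Putting the pieces together: $\dim P.N = \dim P - \dim \mathrm{Stab}_P(N) = \sum_{i=1}^p\sum_{x=1}^i b_i b_x - [M,M]$, and substituting the combinatorial expression for $[M,M]$ gives the claimed formula. The main obstacle is really bookkeeping rather than conceptual: one must check that the decomposition $M = \bigoplus \U_{i,j}^{m_{i,j}} \oplus \bigoplus \V_i^{n_i}$ from Theorem \ref{paraboliccase} feeds correctly into the bilinear Hom-formula preceding the proposition, and that Proposition \ref{combihom} is invoked with the right inequalities so that $\sum_{i,j} m_{i,j} b_{i,j}(N) + \sum_i n_i a_i(N)$ matches $[M,M]$ term by term; the identification of $\dim \mathrm{Stab}$ with $[M,M]$ is standard and the computation of $\dim P$ is immediate. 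I would also remark that, since $\Phi$ preserves stabilizer dimensions, one could alternatively phrase the whole argument on the $\N_n^{(2)}$ side, but the $\mathrm{End}(M)$ computation is cleanest on the quiver side.
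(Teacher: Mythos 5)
Your proposal is correct and follows essentially the same route as the paper: orbit--stabilizer for $\dim P.N$, transfer of the stabilizer dimension through the bijection of Lemma \ref{bijection}, the identification $\dim\Iso_{\GL_{\dfp}}(M)=[M,M]$ (the automorphism group being open dense in $\End(M)$), and the displayed Hom-formula together with $\dim P=\sum_{i=1}^p\sum_{x=1}^i b_ib_x$. The paper's proof is just a terser version of the same chain of equalities; your added justifications are the standard facts it leaves implicit.
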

\begin{proof}The equalities
\[ \dim P.N~= \dim P-\dim\Iso_{P}(N) = \dim P- \dim\Iso_{\GL_{\dfp}}(M) = \dim P - [M,M] \]
yield the claim, here $\Iso_{\GL_{\dfp}}(M)$ is the isotropy group of $m\in R_{\dfp}^{\inj}(\Q_p,I_2)$ in $\GL_{\dfp}$.
\end{proof}
Let $M$ and $M'$ be two representations in $\rep_{K}(\Q_p,I_2)(\df)$.  Since the correspondence of Lemma \ref{bijection} preserves orbit closure relations, we know that $M\leq_{\rm deg}M'$ if and only if the corresponding $2$-nilpotent matrices, denoted by $N=(m_{i,j})_{i,j}$ and $N'=(m'_{i,j})_{i,j}$, respectively, fulfill $P.N'\subset\overline{P.N}$ in $\N_n^{(2)}$. The following theorem is a slightly generalized version of \cite[Theorem 4.3]{BoRe}; the proof is similar, though.
\begin{theorem}\label{pq} We have $M\leq_{\rm deg} M'$ if and only if $a_k(M)\leq a_k(M')$ and $b_{k,l}(M)\leq b_{k,l}(M')$ for all $k,l\in\{1,\punkte,p\}$.
\end{theorem}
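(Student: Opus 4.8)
The plan is to characterize the degeneration order $\leq_{\deg}$ on $\rep^{\inj}_K(\Q_p,I_2)(\dfp)$ by the $\Hom$-functions $a_k$ and $b_{k,l}$, exploiting the fact — valid because the algebra $\A(p,x)$ has, for the relevant dimension vectors, only finitely many indecomposables — that the degeneration order here coincides with the $\Hom$-order. Since by Proposition \ref{combihom} the functions $[\V_k,M]=a_k(M)$ and $[\U_{k,l},M]=b_{k,l}(M)$ together with $[M,\V_i]=\overline{a_i}(M)$ and $[M,\U_{i,j}]=\overline{b_{i,j}}(M)$ are all combinatorially computable, and since $[M,M']$ for arbitrary $M,M'$ is expressed via these (the displayed formula preceding Proposition \ref{stabpar}), it suffices to establish two things: first that $M\leq_{\deg}M'$ forces $a_k(M)\leq a_k(M')$ and $b_{k,l}(M)\leq b_{k,l}(M')$ for all $k,l$; and second the converse.

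For the forward direction I would use the standard semicontinuity of $\dim\Hom(Z,-)$ along orbit closures: for any fixed representation $Z$ the function $m'\mapsto\dim\Hom(Z,M')$ is upper semicontinuous on $R_{\dfp}(\Q_p,I_2)$, hence $M\leq_{\deg}M'$ (meaning $\Orb_{M'}\subset\overline{\Orb_M}$) gives $\dim\Hom(Z,M)\leq\dim\Hom(Z,M')$. Taking $Z=\V_k$ and $Z=\U_{k,l}$ yields the stated inequalities immediately from Proposition \ref{combihom}. (Dually one gets $\overline{a_i}$- and $\overline{b_{i,j}}$-monotonicity using $\dim\Hom(-,Z)$, which will be needed below.)

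For the converse — the substantive direction — I would follow the Bongartz/Abeasis–Del Fra style argument used in \cite[Theorem 4.3]{BoRe}: assume $M\not\cong M'$ but $a_k(M)\leq a_k(M')$, $b_{k,l}(M)\leq b_{k,l}(M')$ for all $k,l$, and exhibit an explicit one-step degeneration $M<_{\deg}L$ with $L$ still satisfying $a_k(L)\leq a_k(M')$, $b_{k,l}(L)\leq b_{k,l}(M')$, then induct on $\dim_K\Hom(M,M)-\dim_K\Hom(M',M')$ (or on $\sum(a_k(M')-a_k(M))+\sum(b_{k,l}(M')-b_{k,l}(M))$, which strictly decreases). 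Concretely, writing $M=\bigoplus\U_{i,j}^{m_{i,j}}\oplus\bigoplus\V_i^{n_i}$, the inequalities translate into numerical conditions on the $(m_{i,j},n_i)$ versus $(m'_{i,j},n'_i)$; one then locates a pair of indecomposable summands of $M$ admitting a short exact sequence $0\to A\to B\oplus C\to D\to 0$ with $B\oplus C$ a summand of $M$ and $A\oplus D$ ``closer'' to $M'$ (the basic moves being: merging $\V_i\oplus\U_{k,l}$-type configurations, or shortening a $\U_{i,j}$ to a shorter $\U$ plus a $\V$, or the analogous loop-degenerations $\U_{i,i}\leadsto\V_i\oplus\V_i$-flavored steps), yielding the minimal degeneration. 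Each such elementary degeneration is realized by an explicit one-parameter family in $R_{\dfp}^{\inj}(\Q_p,I_2)$, so stays inside the relevant variety.

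The main obstacle is the combinatorial bookkeeping in the converse: showing that whenever $M\not\cong M'$ the $a$- and $b$-inequalities \emph{force} the existence of a usable pair of summands for the elementary move, and that the move can be chosen to keep all inequalities against $M'$ intact (so the induction closes). This is where the precise shape of the indecomposables from Lemma \ref{indec2nilp} and the exact formulas of Proposition \ref{combihom} — in particular the ``staircase'' contributions counting arrows with source $\leq l$ and target $\leq k$ — must be used to rule out the degenerate case where no move exists; once one knows the list of indecomposables is this short and linearly ordered in the obvious way, the case analysis is finite and parallels \cite{BoRe} verbatim, which is why the statement is only a ``slightly generalized version'' with an essentially identical proof.
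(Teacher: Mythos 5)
Your proposal is correct and follows essentially the same route as the paper, which gives no independent argument but defers to the proof of \cite[Theorem 4.3]{BoRe}: necessity via upper semicontinuity of $\dim\Hom(Z,-)$ along orbit closures with $Z=\V_k,\U_{k,l}$, and sufficiency by realizing the combinatorial inequalities through a chain of elementary degenerations coming from short exact sequences among the indecomposables of Lemma \ref{indec2nilp} (equivalently, by Zwara's identification of the degeneration order with the hom-order for representation-finite algebras). The only caution is that representation-finiteness alone does not let you restrict the hom-order tests to the injective-type indecomposables $\V_k,\U_{k,l}$ without the explicit construction you sketch, so the combinatorial induction you describe is genuinely needed rather than optional.
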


We describe all minimal, disjoint degenerations analogously to \cite[Theorem 4.6]{BoRe}, where they were described for the Borel-action.
\begin{theorem}\label{mindisjpar}
Let $D<_{\mdeg}D'$ be a minimal, disjoint degeneration in $\rep^{\inj}_{K}(\Q_p,I_2)$. Then it either appears in \cite[Theorem 4.6]{BoRe} or in one of the following chains.

\fbox{$\xygraph{ !{<0cm,-0.13cm>;<0.63cm,-0.13cm>:<0cm,0.63cm>::} 
!{(0,0)}*+{\bullet}="1" 
"1":@(ul,ur) "1" }<_{\mdeg}
\xygraph{ !{<0cm,-0.13cm>;<0.63cm,-0.13cm>:<0cm,0.63cm>::} 
!{(0,0.03) }*+{\ddot{\bullet}}="1" 
 }$} \fbox{$\xygraph{ !{<0cm,-0.13cm>;<0.63cm,-0.13cm>:<0cm,0.63cm>::} 
!{(-0.6,0.03) }*+{\dot{\bullet}}="1" 
!{(0,0) }*+{\bullet}="2" 
"1":@/^0.45cm/"2" 
 }<_{\mdeg}\xygraph{ !{<0cm,-0.13cm>;<0.63cm,-0.13cm>:<0cm,0.63cm>::} 
!{(-0.6,0)}*+{\bullet}="1" 
!{(0,0.03)}*+{\dot{\bullet}}="2" 
"1":@(ul,ur) "1" }<_{\mdeg}\xygraph{ !{<0cm,-0.13cm>;<0.63cm,-0.13cm>:<0cm,0.63cm>::} 
!{(-0.6,0.03) }*+{\dot{\bullet}}="1" 
!{(0,0) }*+{\bullet}="2" 
"2":@/_0.45cm/"1" 
 }$} 
\fbox{$\xygraph{ !{<0cm,-0.13cm>;<0.63cm,-0.13cm>:<0cm,0.63cm>::} 
!{(-0.6,0) }*+{\bullet}="1" 
!{(0,0.03) }*+{\dot{\bullet}}="2" 
"1":@/^0.45cm/"2" 
 }<_{\mdeg}\xygraph{ !{<0cm,-0.13cm>;<0.63cm,-0.13cm>:<0cm,0.63cm>::} 
!{(-0.6,0.03)}*+{\dot{\bullet}}="1"
!{(0,0) }*+{\bullet}="2"
 "2":@(ul,ur) "2" }<_{\mdeg}\xygraph{ !{<0cm,-0.13cm>;<0.63cm,-0.13cm>:<0cm,0.63cm>::} 
!{(-0.6,0) }*+{\bullet}="1" 
!{(0,0.03) }*+{\dot{\bullet}}="2" 
"2":@/_0.45cm/"1" 
 }$}\begin{center}
\fbox{$\xygraph{ !{<0cm,-0.13cm>;<0.63cm,-0.13cm>:<0cm,0.63cm>::} 
!{(-0.6,0) }*+{\bullet}="1" 
!{(0,0) }*+{\bullet}="2" 
"1":@/^0.45cm/"2" 
"1":@/_0.45cm/"2" 
 }<_{\mdeg} \xygraph{ !{<0cm,-0.13cm>;<0.63cm,-0.13cm>:<0cm,0.63cm>::} 
!{(-0.6,0) }*+{\bullet}="1" 
!{(0,0) }*+{\bullet}="2" 
"1":@/_0.45cm/"2" 
"2":@/_0.45cm/"1"}<_{\mdeg}
\xygraph{ !{<0cm,-0.13cm>;<0.63cm,-0.13cm>:<0cm,0.63cm>::} 
!{(-0.6,0)}*+{\bullet}="1"
!{(0,0) }*+{\bullet}="2"
 "1":@(ul,ur) "1"
 "2":@(ul,ur) "2" }<_{\mdeg}
\xygraph{ !{<0cm,-0.13cm>;<0.63cm,-0.13cm>:<0cm,0.63cm>::} 
!{(-0.6,0) }*+{\bullet}="1" 
!{(0,0) }*+{\bullet}="2" 
"2":@/^0.45cm/"1" 
"2":@/_0.45cm/"1" 
 }$}\\
\fbox{$\xygraph{!{<0cm,-0.13cm>;<0.63cm,-0.13cm>:<0cm,0.63cm>::} 
!{(-0.6,0) }*+{\bullet}="1" 
!{(0,0) }*+{\bullet}="2" 
!{(0.6,0) }*+{\bullet}="3" 
"1":@/^0.45cm/"3" 
"2":@/^0.45cm/"1"}<_{\mdeg} \xygraph{!{<0cm,-0.13cm>;<0.63cm,-0.13cm>:<0cm,0.63cm>::} 
!{(-0.6,0)}*+{\bullet}="1" 
!{(0,0)}*+{\bullet}="2" 
!{(0.6,0)}*+{\bullet}="3" 
 "1":@(ul,ur) "1"
"2":@/^0.45cm/"3"}
 <_{\mdeg}\xygraph{!{<0cm,-0.13cm>;<0.63cm,-0.13cm>:<0cm,0.63cm>::} 
!{(-0.6,0) }*+{\bullet}="1" 
!{(0,0) }*+{\bullet}="2" 
!{(0.6,0) }*+{\bullet}="3" 
"3":@/_0.45cm/"1" 
"1":@/_0.45cm/"2"}<_{\mdeg} \xygraph{!{<0cm,-0.13cm>;<0.63cm,-0.13cm>:<0cm,0.63cm>::} 
!{(-0.6,0) }*+{\bullet}="1"
!{(0,0) }*+{\bullet}="2"  
!{(0.6,0)}*+{\bullet}="3" 
 "1":@(ul,ur) "1"
"3":@/_0.45cm/"2"} 
 <_{\mdeg}\xygraph{!{<0cm,-0.13cm>;<0.63cm,-0.13cm>:<0cm,0.63cm>::} 
!{(-0.6,0) }*+{\bullet}="1" 
!{(0,0) }*+{\bullet}="2" 
!{(0.6,0) }*+{\bullet}="3" 
"2":@/^0.45cm/"1" 
"3":@/_0.45cm/"1"}$}\\
\fbox{$\xygraph{!{<0cm,-0.13cm>;<0.63cm,-0.13cm>:<0cm,0.63cm>::} 
!{(-0.6,0) }*+{\bullet}="1" 
!{(0,0) }*+{\bullet}="2" 
!{(0.6,0) }*+{\bullet}="3" 
 "2":@(ul,ur) "2"
"1":@/_0.45cm/"3" }<_{\mdeg}\xygraph{!{<0cm,-0.13cm>;<0.63cm,-0.13cm>:<0cm,0.63cm>::} 
!{(-0.6,0) }*+{\bullet}="1" 
!{(0,0) }*+{\bullet}="2" 
!{(0.6,0) }*+{\bullet}="3" 
"1":@/^0.45cm/"2" 
"2":@/_0.45cm/"3"}<_{\mdeg}
\left\lbrace \begin{array}{l}
\xygraph{!{<0cm,-0.13cm>;<0.63cm,-0.13cm>:<0cm,0.63cm>::} 
!{(-0.6,0) }*+{\bullet}="1" 
!{(0,0) }*+{\bullet}="2" 
!{(0.6,0) }*+{\bullet}="3" 
"2":@/_0.45cm/"1" 
"2":@/_0.45cm/"3"} \\
\xygraph{!{<0cm,-0.13cm>;<0.63cm,-0.13cm>:<0cm,0.63cm>::} 
!{(-0.6,0) }*+{\bullet}="1" 
!{(0,0) }*+{\bullet}="2" 
!{(0.6,0) }*+{\bullet}="3" 
"1":@/^0.45cm/"2" 
"3":@/^0.45cm/"2"}             \end{array}\right\rbrace <_{\mdeg}\xygraph{!{<0cm,-0.13cm>;<0.63cm,-0.13cm>:<0cm,0.63cm>::} 
!{(-0.6,0) }*+{\bullet}="1" 
!{(0,0) }*+{\bullet}="2" 
!{(0.6,0) }*+{\bullet}="3" 
"2":@/_0.45cm/"1" 
"3":@/^0.45cm/"2"} <_{\mdeg}\xygraph{!{<0cm,-0.13cm>;<0.63cm,-0.13cm>:<0cm,0.63cm>::} 
!{(-0.6,0) }*+{\bullet}="1" 
!{(0,0) }*+{\bullet}="2" 
!{(0.6,0) }*+{\bullet}="3" 
 "2":@(ul,ur) "2"
"3":@/^0.45cm/"1" }$}\\
\fbox{$\xygraph{!{<0cm,-0.13cm>;<0.63cm,-0.13cm>:<0cm,0.63cm>::} 
!{(-0.6,0) }*+{\bullet}="1" 
!{(0,0) }*+{\bullet}="2" 
!{(0.6,0) }*+{\bullet}="3" 
"1":@/^0.45cm/"3" 
"2":@/_0.45cm/"3"}<_{\mdeg} 
\xygraph{!{<0cm,-0.13cm>;<0.63cm,-0.13cm>:<0cm,0.63cm>::} 
!{(-0.6,0)}*+{\bullet}="1"
!{(0,0)}*+{\bullet}="2" 
!{(0.6,0)}*+{\bullet}="3" 
"1":@/^0.45cm/"2"
 "3":@(ul,ur) "3"}
 <_{\mdeg}
\xygraph{!{<0cm,-0.13cm>;<0.63cm,-0.13cm>:<0cm,0.63cm>::} 
!{(-0.6,0) }*+{\bullet}="1" 
!{(0,0) }*+{\bullet}="2" 
!{(0.6,0) }*+{\bullet}="3" 
"1":@/^0.45cm/"3" 
"3":@/^0.45cm/"2"}<_{\mdeg} 
\xygraph{!{<0cm,-0.13cm>;<0.63cm,-0.13cm>:<0cm,0.63cm>::} 
!{(-0.6,0) }*+{\bullet}="1" 
!{(0,0) }*+{\bullet}="2" 
!{(0.6,0) }*+{\bullet}="3" 
"2":@/_0.45cm/"3" 
"3":@/_0.45cm/"1"}<_{\mdeg} 
\xygraph{!{<0cm,-0.13cm>;<0.63cm,-0.13cm>:<0cm,0.63cm>::} 
!{(-0.6,0) }*+{\bullet}="1" 
!{(0,0) }*+{\bullet}="2" 
!{(0.6,0) }*+{\bullet}="3" 
"2":@/_0.45cm/"1"
 "3":@(ul,ur) "3"}
 <_{\mdeg}
\xygraph{!{<0cm,-0.13cm>;<0.63cm,-0.13cm>:<0cm,0.63cm>::} 
!{(-0.6,0) }*+{\bullet}="1" 
!{(0,0) }*+{\bullet}="2" 
!{(0.6,0) }*+{\bullet}="3" 
"3":@/^0.45cm/"2" 
"3":@/_0.45cm/"1"}$}\end{center}
\end{theorem}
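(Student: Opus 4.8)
\emph{Proof proposal.} The argument is planned to run in parallel with \cite[Theorem 4.6]{BoRe}. The first step is to pass, via Lemma \ref{bijection} and Theorem \ref{paraboliccase}, to the combinatorial model, recording a representation $D\in\rep^{\inj}_{K}(\Q_p,I_2)(\dfp)$ by its enhanced oriented link pattern $\eolp(D)$; by Theorem \ref{pq} together with the explicit formulae of Proposition \ref{combihom}, the relation $D\leq_{\deg}D'$ becomes the finite system of inequalities $a_k(D)\leq a_k(D')$ and $b_{k,l}(D)\leq b_{k,l}(D')$ for all $k,l\in\{1,\punkte,p\}$, each side being a count of dots, sources, targets and arrows in the two patterns, and the degeneration is disjoint exactly when $\eolp(D)$ and $\eolp(D')$ share no indecomposable summand, i.e.\ no common arrow and no common dot. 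The key reduction is then: if the degeneration involves no loop $\U_{i,i}$, no dot of multiplicity $\geq 2$ and no vertex carrying more than one arrow-end in either pattern, then it takes place among configurations that already occur for ordinary oriented link patterns, hence is one of the minimal disjoint degenerations classified in \cite[Theorem 4.6]{BoRe}. So it remains to classify the minimal disjoint degenerations in which at least one genuinely parabolic local feature — a loop, a dot of multiplicity $\geq 2$, or a vertex with $\geq 2$ arrow-ends — appears, and to show that these are precisely the chains listed.

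For that I would proceed in two steps. First, prove a \emph{bounded-support lemma}: such a minimal disjoint degeneration is supported on at most three vertices of $\Q_p$. Since the vertices of $\Q_p$ are linearly ordered and the counting functions $a_k$, $b_{k,l}$ of Proposition \ref{combihom} are non-decreasing in $k$ and in $l$, if more than three vertices carried structure one could perform only the ``lowest'' part of the degeneration and, by the inequalities of Theorem \ref{pq}, obtain a representation $L$ with $D<_{\deg}L<_{\deg}D'$, contradicting minimality. Second, with the support bounded, run the finite enumeration: list the disjoint pairs $(\eolp(D),\eolp(D'))$ on $\leq 3$ vertices exhibiting a parabolic feature; for each candidate step verify $D\leq_{\deg}D'$ by the $a_k$- and $b_{k,l}$-inequalities, and verify minimality by computing $\dim\overline{P.N}-\dim\overline{P.N'}$ from Proposition \ref{stabpar} and checking it equals $1$ (this is the codimension of $\Orb_{D'}$ in $\overline{\Orb_D}$ since $\Phi$ preserves codimensions). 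A codimension-one degeneration is automatically minimal: an intermediate $D<_{\deg}L<_{\deg}D'$ would force $\dim\Orb_L$ to equal $\dim\Orb_D$ or $\dim\Orb_{D'}$, hence $\Orb_L=\Orb_D$ or $\Orb_L=\Orb_{D'}$, a contradiction. Grouping the surviving steps into maximal chains should reproduce exactly the boxed chains; conversely these together with \cite[Theorem 4.6]{BoRe} are seen to exhaust all minimal disjoint degenerations.

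I expect the main obstacle to be the bounded-support lemma together with the completeness of the enumeration. Making the lemma airtight requires a uniform monotonicity argument showing that \emph{every} configuration on four or more vertices genuinely using a parabolic feature admits a proper intermediate, rather than an ad hoc split over the possibilities; the awkward configurations are those mixing a loop with an ordinary arrow and those with a doubled source or target, where one must additionally verify that the intermediate terms displayed in each chain are the only representations lying strictly between its two ends in the degeneration order. All of this is elementary once Propositions \ref{combihom} and \ref{stabpar} are available, which is why one would, as the author does, only record the new chains and remark that the proof is analogous to \cite[Theorem 4.6]{BoRe}.
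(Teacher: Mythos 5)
Your overall plan — reduce to enhanced oriented link patterns, isolate the degenerations exhibiting a genuinely parabolic local feature, bound the support, and finish by a finite check — is the natural one, and the paper itself gives no more than a pointer to the analogy with \cite[Theorem 4.6]{BoRe}. But two steps of your verification scheme do not go through as stated. First, your bounded-support lemma should not be derived from the monotonicity of the $a_k$ and $b_{k,l}$: the argument ``perform only the lowest part of the degeneration'' would equally well manufacture an intermediate for the four-vertex minimal disjoint degenerations of the Borel case, which do exist in \cite[Theorem 4.6]{BoRe}, so as phrased it proves too much. The correct source of the bound is the extension structure of Proposition \ref{mindisj} (i.e.\ \cite[Corollary 4.5]{BoRe}): $D'$ has at most two indecomposable summands, each supported on at most two vertices of $\Q_p$, and a parabolic feature forces at least two of these at most four vertex slots to coincide, whence at most three distinct vertices.

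Second, and more seriously, your minimality test ``compute the codimension and check it equals $1$'' fails for several of the chains you are supposed to certify. Unlike the Borel case, the parabolic minimal disjoint degenerations are \emph{not} all of codimension one: by Proposition \ref{dimhom}, for $D=\U_{i,i}$ and $D'=\V_i\oplus\V_i$ one has $[D,D]=\delta_{i\leq i}+\delta_{i\leq i}\delta_{i\leq i}=2$ and $[D',D']=4\delta_{i\leq i}=4$, so
\[
\codim(\Orb_{D'},\overline{\Orb_D})=[D',D']-[D,D]=2,
\]
and the same happens for the step $\U_{i,i}\oplus\U_{j,j}<_{\mdeg}\U_{i,j}\oplus\U_{i,j}$ in the fourth box. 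These are minimal (in the first case simply because $\U_{i,i}$ and $\V_i\oplus\V_i$ are the only isomorphism classes with that dimension vector), but your criterion would either reject them or leave them unverified. For the codimension-two steps you must instead enumerate all isomorphism classes $L$ with $\dimv L=\dimv D$ supported on the relevant vertices and check, via the inequalities of Theorem \ref{pq} and Proposition \ref{combihom}, that none satisfies $D<_{\deg}L<_{\deg}D'$. With these two repairs the plan does reproduce the stated chains.
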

These minimal, disjoint degenerations yields concrete descriptions of the orbit closures in terms of enhanced oriented link patterns right away. 
\subsubsection{Dimensions of orbits}\label{dimsubpar}
The same reasoning as in the previous section yields the following results about the dimensions of the $P$-orbits.
Let $N\in\N^{(2)}$ be a $2$-nilpotent matrix that corresponds to a representation in $\rep_{K}^{\inj}(\Q_p,I_2)(\dfp)$ via the bijection of Lemma \ref{bijection}:
\[M=\bigoplus\limits_{i,j=1}^p\U_{i,j}^{m_{i,j}}\oplus \bigoplus\limits_{i=1}^p\V_i^{n_i}.\]  
Since  $\dim\overline{\Orb_M}=\dim \Orb_M=\dim P-\dim \Iso_{P}(M)=\dim P-[M,M]$, we have
 \[\dim\overline{\Orb_M}=\dim\Orb_M=\sum\limits_{i=1}^p \left(\sum\limits_{x=1}^i b_i\right)^2-\sum\limits_{i,j=1}^p m_{i,j}b_{i,j}(M)-\sum\limits_{i=1}^p n_i a_i(M).\]
There is a unique $\GL_{\dfp}$-orbit of minimal dimension in $R_{\dfp}(\Q_p,I)$, represented by $M_0\coloneqq\bigoplus\limits_{i=1}^p\V_i^{b_i}$. 
It corresponds naturally to the $P$-orbit of minimal dimension in $\N_n^{(2)}$, which is represented by the zero-matrix and has dimension $0$. Thus,
\[\dim P.N=\sum\limits_{i=1}^p \sum\limits_{x=1}^i (b_i\cdot b_x)-\sum\limits_{i,j=1}^p m_{i,j}b_{i,j}(N)-\sum\limits_{i=1}^p n_i a_i(N). \]
We describe the open orbits for the parabolic actions. In case $\GL_n$ acts, the open orbit is clearly given by the highest rank matrices. In case of a parabolic action, the description is slightly more difficult, though.\\[1ex]
 Let $M$ be a representation in $R_{\dfp}^{\inj}(\Q_p,I_2)$ and consider the enhanced oriented link pattern corresponding to $M$. As has been seen in Proposition \ref{pconj}, this enhanced oriented link pattern can be extended to an oriented link pattern by splitting each vertex $k$ into $b_k$ vertices $k^{(1)},\punkte,k^{(b_k)}$ and drawing arrows accordingly. Without loss of generality, we denote the vertices by $1_{P},\punkte,n_{P}$ and can read off the open orbit directly.\\[1ex]
We define $\U^{P}_{i_{P},j_{P}}\coloneqq\U_{x,y}$ if there exist $1\leq s\leq b_x$ and $1\leq t\leq b_y$, such that $i_{P}=b_x^{(s)}$ and $j_{P}=b_y^{(t)}$. Furthermore, set $\V^{P}_{i_{P}}\coloneqq\V_{x}$ if there exists an integer $1\leq s\leq b_x$, such that $i_{P}=b_x^{(s)}$.
\begin{proposition}
The open orbit is represented by (1) for even integers $n$ and by (2) for odd integers $n$: 
\[(1)~M_{open}=\bigoplus\limits_{k=1}^{n/2}\U^P_{(n-k+1)_{P},k_{P}};~~ (2)~M_{open}=\bigoplus\limits_{k=1}^{(n-1)/2}\U^P_{(n-k+1)_{P},k_{P}}\oplus \V^P_{(\frac{n-1}{2}+1)_{P}}.\]
\end{proposition}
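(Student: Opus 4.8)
The plan is to identify the open orbit by a dimension count, using the formula for $\dim \Orb_M$ established just above together with the Hom-dimension combinatorics of Proposition~\ref{combihom}. Since $R_{\dfp}^{\inj}(\Q_p,I_2)$ is irreducible (it is an open subset of the vector bundle $\GL_{\dfp}\times^P\N_n^{(2)}$ via Lemma~\ref{bijection}, and $\N_n^{(2)}$ is irreducible), there is a unique orbit of maximal dimension, and showing that the claimed $M_{open}$ has no proper degeneration suffices to pin it down. First I would translate everything to the ``split'' oriented link pattern picture: via Proposition~\ref{pconj}, an $M\in R_{\dfp}^{\inj}(\Q_p,I_2)$ becomes, after splitting vertex $k$ into $b_k$ vertices $k^{(1)},\punkte,k^{(b_k)}$, an ordinary oriented link pattern on the $n$ vertices $1_P,\punkte,n_P$, where each arrow, fixed vertex, source and target carries a ``block label'' recording which original vertex it came from. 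In this picture $\U^P_{i_P,j_P}$ is just an arrow $j_P\rightarrow i_P$ and $\V^P_{i_P}$ a dot at $i_P$.

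Next I would make the dimension count explicit. Using $\dim\Orb_M=\dim P-[M,M]$ and the bilinear expansion of $[M,M]$ from the displayed formula before Proposition~\ref{stabpar}, one sees that maximizing $\dim\Orb_M$ is the same as minimizing $[M,M]$ over all $M\in R_{\dfp}^{\inj}(\Q_p,I_2)$. By Proposition~\ref{dimhom}, the Hom-space between two indecomposables of the same ``chain type'' is controlled by $\delta_{x\le y}$-terms; the upshot is that $[M,M]$ is minimized when the enhanced oriented link pattern of $M$ has as many arrows as possible (each arrow contributes a $\V$ of smaller total multiplicity than two dots would), and, among maximal-arrow configurations, when the arrows are nested as ``long'' as possible — i.e.\ an arrow from the lowest available vertex to the highest available vertex, then the next lowest to the next highest, and so on. This is exactly the configuration $\bigoplus_{k=1}^{\lfloor n/2\rfloor}\U^P_{(n-k+1)_P,k_P}$, with, when $n$ is odd, a single leftover dot that must be placed at the middle vertex $\bigl(\tfrac{n-1}{2}+1\bigr)_P$ to keep $[M,M]$ minimal. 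Concretely I would verify, using Theorem~\ref{pq} (degenerations are governed by the functions $a_k$ and $b_{k,l}$), that any move away from this configuration — merging two arrows' endpoints, shortening an arrow, or relocating the surviving dot — produces a proper degeneration, hence strictly decreases $\dim\Orb_M$.

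The verification that $M_{open}$ is $\le_{\deg}$-minimal is where the real work sits. I would argue as follows: for the claimed $M_{open}$ one computes $a_k(M_{open})$ and $b_{k,l}(M_{open})$ directly from Proposition~\ref{combihom} — for the nested pattern, the number of fixed-or-target vertices $\le k$ and the number of arrows with source $\le l$ and target $\le k$ are both as small as the constraints $\sum_j(p_{i,j}+p_{j,i})\le b_i$ allow. Then for an arbitrary $M'\in R_{\dfp}^{\inj}(\Q_p,I_2)$ I would show $a_k(M_{open})\le a_k(M')$ and $b_{k,l}(M_{open})\le b_{k,l}(M')$ for all $k,l$, so that $M_{open}\le_{\deg}M'$ by Theorem~\ref{pq}; since this holds for \emph{every} $M'$, $\Orb_{M_{open}}$ is dense, hence open. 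The main obstacle, and the step I expect to require the most care, is precisely these two inequalities: one must check that the greedy ``longest-nesting'' pattern simultaneously minimizes every $a_k$ and every $b_{k,l}$, which is a combinatorial exchange/rearrangement argument (if some $M'$ beat $M_{open}$ on some $a_k$ or $b_{k,l}$, one derives a contradiction with the count of fixed vertices plus arrow-endpoints below $k$, respectively the count of arrows confined below $(k,l)$). The odd-$n$ case needs the extra observation that the lone $\V$ must sit at the central vertex: placing it anywhere else would increase some $a_k$. Once these inequalities are in hand, uniqueness of the dense orbit (irreducibility of the ambient variety) finishes the proof.
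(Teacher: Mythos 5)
Your plan is correct and lands on the same strategic skeleton as the paper's proof --- namely, show that $M_{open}$ degenerates to \emph{every} other representation of dimension vector $\dfp$, so that $\Orb_{M_{open}}$ is the unique dense orbit --- but you implement the decisive step by a different mechanism. The paper argues combinatorially through the covering relations of the degeneration order: every other (enhanced) oriented link pattern is obtained from that of $M_{open}$ by a chain of the ``decreasing minimal changes'' of Theorem \ref{mindisjpar}, whence $M_{open}<_{\deg}M$ and $\dim\Orb_{M_{open}}-\dim\Orb_M\geq 1$ for all $M\ncong M_{open}$. You instead verify the numerical criterion of Theorem \ref{pq} directly, checking $a_k(M_{open})\leq a_k(M')$ and $b_{k,l}(M_{open})\leq b_{k,l}(M')$ for all $k,l$ and all $M'$; these inequalities do hold (e.g.\ $b_{k,l}(M)=d_l-\sharp\{\textrm{arrows with source}\leq d_l \textrm{ and target}>d_k\}$ in the split picture, and the nested pattern attains the upper bound $\min(d_l,\,n-d_k,\,\lfloor n/2\rfloor)$ on that count), so your route closes. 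What your version buys is independence from the explicit classification of minimal disjoint degenerations --- you only need Proposition \ref{combihom} and Theorem \ref{pq} --- at the price of the exchange-type counting argument you correctly flag as the real work; the paper's version is shorter because it leans on the already-catalogued minimal moves. Two small remarks: your parenthetical heuristic that maximizing $\dim\Orb_M$ amounts to minimizing $[M,M]$ is a redundant second pass at the same conclusion and can be dropped; and in the odd case the position of the lone $\V^P$ only matters at the level of blocks (any split position inside the block containing $\frac{n-1}{2}+1$ gives the same representation), which is worth saying when you argue that relocating it increases some $a_k$.
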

\begin{proof}  
Regardless of $n$ being even or odd, every oriented link pattern corresponding to an arbitrary representation $M\in\rep^{\inj}_{K}(\Q_p,I_2)(\dfp)$ is obtained by applying ``decreasing minimal changes'' to the oriented link pattern of $M_{open}$. Thus, for each representation $M_{open}\ncong M\in\rep^{\inj}_{K}(\Q_p,I_2)(\dfp)$, the degeneration $M_{open}<_{\deg}M$ is a proper chain of minimal degenerations and $\dim\Orb_{M_{open}}-\dim\Orb_{M}\geq 1$.
\end{proof} 
\subsubsection[Minimal degenerations in B-orbit closures]{Minimal degenerations in $B$-orbit closures}
 The key to calculating all minimal degenerations is obtained by the following proposition (see \cite[Corollary 4.5]{BoRe}).
\begin{proposition}\label{mindisj}
Let $D<_{\mdeg}D'$ be a minimal, disjoint degeneration in $\rep^{\inj}_{K}(\Q_n,I_2)$. Then either $D'$ is indecomposable or $D'\cong U\oplus V$, where $U$ and $V$ are indecomposables and there exists an exact sequence $0\rightarrow U\rightarrow D\rightarrow V\rightarrow 0$ or $0\rightarrow V\rightarrow D\rightarrow U\rightarrow 0$.
\end{proposition}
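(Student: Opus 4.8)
This is \cite[Corollary 4.5]{BoRe}, and the plan is to reproduce its proof in the present notation; the only ingredients are general facts about module degenerations over the finite-dimensional algebra $\A(n,2)=K\Q_n/I_2$. The basic tool is the description of the degeneration order by short exact sequences: by Zwara's theorem, building on work of Riedtmann, for two modules of a fixed dimension vector one has $M\leq_{\deg}M'$ if and only if there is a short exact sequence $0\to Z\to M\oplus Z\to M'\to 0$ for some module $Z$, and conversely every such sequence yields a degeneration. The sharper input, due to Bongartz, concerns \emph{minimal} degenerations: if $M<_{\deg}M'$ admits no intermediate orbit and $M$ and $M'$ have no common nonzero direct summand, then $Z$ may be chosen to be a direct summand of $M'$, so that $M'\cong Z\oplus V$ and the sequence collapses to a non-split sequence $0\to Z\to M\to V\to 0$.

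Granting this, the proposition is almost immediate. If $D'$ is indecomposable we are in the first alternative and there is nothing to prove, so assume $D'$ decomposes. Applying the above to the minimal disjoint degeneration $D<_{\mdeg}D'$ produces a decomposition $D'\cong U\oplus V$ with both summands nonzero, together with a short exact sequence $0\to U\to D\to V\to 0$ which is non-split (a split sequence would force $D\cong D'$, contradicting $D<_{\mdeg}D'$); after possibly interchanging the roles of $U$ and $V$ this is one of the two sequences in the statement. It remains to see that $U$ and $V$ can be taken indecomposable, and here one argues by minimality. If, say, $U\cong U_1\oplus U_2$ with both summands nonzero, then pushing the sequence out along the projection $U\twoheadrightarrow U_2$ yields $0\to U_2\to D/U_1\to V\to 0$ together with $0\to U_1\to D\to D/U_1\to 0$, hence a chain $D\leq_{\deg}U_1\oplus(D/U_1)\leq_{\deg}U_1\oplus U_2\oplus V\cong D'$. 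By minimality the middle term is isomorphic to $D$ or to $D'$: in the first case $U_1$ is a common direct summand of $D$ and $D'$, contradicting disjointness, while in the second case the extension of $D$ acquires the single indecomposable $U_1$ on its left. Iterating this together with the mirror-image pullback reduction for a nontrivial summand of $V$ forces $U$ and $V$ indecomposable.

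I expect the only real obstacle to be this last reduction: the pushout and pullback moves have to be organised so that at each minimality dichotomy the ``collapsing'' branch is ruled out by disjointness while the other branch makes genuine progress, and so that the procedure terminates. This is precisely the bookkeeping carried out in \cite[Corollary 4.5]{BoRe}, and it transfers without change here because $\A(n,2)$ enters only through the Krull--Remak--Schmidt theorem and the short-exact-sequence criterion for $\leq_{\deg}$. As a check independent of that machinery, once the decomposition $D'\cong U\oplus V$ has been produced, indecomposability of $U$ and $V$ can also be read off directly from the explicit list of indecomposables $\U_{i,j}$ and $\V_i$ of Lemma \ref{indec2nilp} together with the $\Hom$-dimensions of Proposition \ref{dimhom}; these are in any case the data that feed the subsequent complete enumeration of minimal disjoint degenerations.
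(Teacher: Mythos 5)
The paper itself gives no argument for this proposition---it is quoted from \cite[Corollary 4.5]{BoRe} without proof---so your opening citation already reproduces everything the paper does. The problem lies in your reconstruction, and it sits exactly where you flagged it: the reduction to \emph{both} $U$ and $V$ indecomposable does not follow from the pushout/pullback game. Write $D'\cong X_1\oplus\dots\oplus X_r$ with all $X_i$ indecomposable; a presentation $0\to U\to D\to V\to 0$ with $U\oplus V\cong D'$ amounts to a partition of the $X_i$ into two nonempty groups. Your pushout move sends $(U_1\oplus U_2,\,V)$ to $(U_1,\,U_2\oplus V)$, and the mirror pullback move sends $(U,\,V_1\oplus V_2)$ to $(U\oplus V_2,\,V_1)$: in each case the collapsing branch of the minimality dichotomy is indeed excluded by disjointness, so the other branch always goes through. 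But every move merely transfers summands from one side of the sequence to the other. If $r\geq 3$, one side always carries at least two summands, a move is always available, no contradiction ever arises, and the process cycles indefinitely among partitions with a singleton on one side; it can never reach a partition with singletons on both sides, because none exists. So the iteration cannot prove $r=2$, and $r=2$ is precisely the content of the proposition that goes beyond the general Riedtmann--Zwara--Bongartz theory (which only guarantees $D'\cong U\oplus V$ with the quotient term indecomposable).

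Closing this gap requires input specific to $\rep^{\inj}_K(\Q_n,I_2)$---in \cite{BoRe} it comes from the hom-order description of $\leq_{\deg}$ (Theorem \ref{pq} here), which lets one exhibit an explicit, strictly intermediate orbit whenever the subobject term of the extension decomposes, rather than only the two candidates your minimality dichotomy offers. Your proposed fallback---reading off the indecomposability of $U$ and $V$ from Lemma \ref{indec2nilp} and Proposition \ref{dimhom}---is circular: deciding that no direct sum of three or more indecomposables occurs as a minimal disjoint degeneration is exactly the enumeration that this proposition is designed to enable.
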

A method to construct all orbits contained in a given orbit closure is described in \cite[Theorem 4.6]{BoRe}, since Proposition \ref{mindisj} ``localizes`` the problem to sequences of changes at at most four vertices of the corresponding oriented link pattern. All these minimal, disjoint degenerations are explicitly listed (in terms of oriented link patterns as well) in \cite[Theorem 4.6]{BoRe}.\\[1ex]
Our aim is to describe all minimal degenerations in detail.
Consider an arbitrary minimal, disjoint degeneration $D<_{\mdeg}D'$ in $\rep^{\inj}_{K}(\Q_n,I_2)$. To classify the minimal degenerations in $\rep^{\inj}_{K}(\Q_n,I_2)(\dfs)$, let us (if possible) consider a representation $W$, such that $D\oplus W<_{\deg}D'\oplus W$ is a degeneration in $\rep^{\inj}_{K}(\Q_n,I_2)(\dfs)$. We give an explicit criterion as to whether this degeneration is minimal. 
\begin{theorem}\label{indecgeneral}
The degeneration $D\oplus W<_{\deg}D'\oplus W$ is minimal if and only if every indecomposable direct summand $X$ of $W$ fulfills $[X,D]-[X,D']=0$ and $[D,X]-[D',X]=0$.
\end{theorem}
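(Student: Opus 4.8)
The plan is to reduce everything to the fact, implicit in Theorem~\ref{pq}, that the degeneration order on $\rep^{\inj}_{K}(\Q_n,I_2)$ is a two-sided Hom-order. Since the $\V_k$ and $\U_{k,l}$ of Lemma~\ref{indec2nilp} are \emph{all} the indecomposables, Theorem~\ref{pq} says that $M\leq_{\deg}M'$ (for $M,M'$ of the same dimension vector) is equivalent to $[X,M]\leq[X,M']$ for every indecomposable $X$; and upper semicontinuity of $\dim\Hom(X,-)$ and of $\dim\Hom(-,X)$ shows that every degeneration $M\leq_{\deg}M'$ in addition satisfies $[M,X]\leq[M',X]$ for all $X$. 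In particular the two quantities $[X,D]-[X,D']$ and $[D,X]-[D',X]$ in the statement are automatically $\leq 0$, so the criterion asks precisely that they vanish. A further free consequence of the Hom-order characterization is cancellation of common summands: $M\oplus Z\leq_{\deg}N\oplus Z$ if and only if $M\leq_{\deg}N$. These observations, together with Propositions~\ref{combihom} and~\ref{mindisj}, are the only tools I would use.

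For sufficiency, assume $[X,D]=[X,D']$ and $[D,X]=[D',X]$ for every indecomposable summand $X$ of $W$. Then $D\oplus W\leq_{\deg}D'\oplus W$, and the degeneration is proper because $D\ncong D'$. Suppose $D\oplus W\leq_{\deg}L\leq_{\deg}D'\oplus W$; I would first show that $W$ is a direct summand of $L$. For any indecomposable $X$ dividing $W$ the chain $[X,D]+[X,W]\leq[X,L]\leq[X,D']+[X,W]$ collapses to the equality $[X,L]=[X,D]+[X,W]$, and dually $[L,X]=[D,X]+[W,X]$; feeding these equalities into Proposition~\ref{combihom}, so that they become statements about numbers of fixed vertices, sources and targets in $\olp(L)$, one reads off that every arrow and every fixed vertex of $\olp(W)$ already appears in $\olp(L)$, i.e. $\add W\subseteq\add L$. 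Writing $L\cong L_0\oplus W$ and cancelling $W$ gives $D\leq_{\deg}L_0\leq_{\deg}D'$; minimality of $D<_{\mdeg}D'$ then forces $L_0\cong D$ or $L_0\cong D'$, hence $L\cong D\oplus W$ or $L\cong D'\oplus W$.

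For necessity, I argue by contraposition: suppose some indecomposable summand $X_0$ of $W$ has $[X_0,D]<[X_0,D']$, or $[D,X_0]<[D',X_0]$. Writing $W\cong X_0\oplus W'$ and using cancellation, it suffices to produce $L_1$ strictly between $D\oplus X_0$ and $D'\oplus X_0$, since then $L:=L_1\oplus W'$ is strictly between $D\oplus W$ and $D'\oplus W$. By Proposition~\ref{mindisj} the degeneration $D<_{\mdeg}D'$ has one of a short list of shapes: $D'$ indecomposable, or $D'\cong U\oplus V$ with a non-split exact sequence $0\to U\to D\to V\to 0$ (up to interchanging $U$ and $V$). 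The strict Hom-inequality detects a nonzero homomorphism linking $X_0$ to an indecomposable constituent of $D$ (a map out of $X_0$ in the first case, into $X_0$ in the second), and from it one builds $L_1$ as the middle term of a suitable non-split extension in which only part of the passage from $D$ to $D'$ is carried out while $X_0$ is absorbed. That $D\oplus X_0\leq_{\deg}L_1\leq_{\deg}D'\oplus X_0$ and that $L_1$ is isomorphic to neither end is then verified by comparing the invariants $a_k$, $b_{k,l}$ of Theorem~\ref{pq}.

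The hard part, in both directions, is the passage from the clean Hom-order bookkeeping to the combinatorial or homological model: proving $W\mid L$ in the sufficiency part requires translating the pinned Hom-dimensions through Proposition~\ref{combihom} and reading off the link pattern of $L$, while constructing the interpolating module $L_1$ in the necessity part requires a case analysis over the shapes of minimal disjoint degenerations (Proposition~\ref{mindisj}, with the explicit list in Theorem~\ref{mindisjpar} and \cite[Theorem~4.6]{BoRe}) and care about the direction in which $X_0$ interacts with $D$. Everything else is the formal manipulation of the first paragraph.
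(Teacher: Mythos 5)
Your overall plan (reduce to the minimal disjoint piece $D<_{\mdeg}D'$, prove necessity by exhibiting explicit interpolating modules, prove sufficiency by showing any intermediate $L$ can be compared with $D\oplus W$ and $D'\oplus W$ after stripping off $W$) is the right shape, and your necessity sketch is essentially the paper's argument: for the extension-type degenerations of Proposition \ref{mindisj} the paper builds the interpolating module as a pushout middle term, and for the remaining degeneration $\U_{t,s}<_{\mdeg}\U_{s,t}$ it writes down explicit intermediate link patterns such as $\U_{t,s}\oplus\V_k<_{\deg}\U_{k,s}\oplus\V_t<_{\deg}\U_{s,t}\oplus\V_k$. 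The genuine gap is in your sufficiency argument, at the step ``feeding these equalities into Proposition \ref{combihom} \dots one reads off that every arrow and every fixed vertex of $\olp(W)$ already appears in $\olp(L)$, i.e.\ $\add W\subseteq\add L$.'' The equalities you have at that point are $[X,L]=[X,D]+[X,W]$ and $[L,X]=[D,X]+[W,X]$ only for $X$ an indecomposable summand of $W$, and these do \emph{not} determine which indecomposables occur in $L$. Concretely, take $n=4$, $D=\U_{3,2}<_{\mdeg}\U_{2,3}=D'$ and $W=\U_{4,1}$ (which satisfies your hypotheses), and set $L_1=\U_{3,1}\oplus\U_{4,2}$: one checks from Proposition \ref{combihom} that $[\U_{4,1},L_1]=[\U_{4,1},M]=[\U_{4,1},M']=1$ and $[L_1,\U_{4,1}]=[M,\U_{4,1}]=[M',\U_{4,1}]=1$, yet $\U_{4,1}$ is not a summand of $L_1$. (Here $L_1$ happens to be excluded from the interval for a different reason, namely $b_{3,1}(L_1)=1>0=b_{3,1}(M')$; but that is exactly the point --- ruling out such $L$ requires the full sandwich of Hom-conditions over \emph{all} indecomposables, not just the summands of $W$, and converting that into ``$W$ divides $L$'' is the hard part, not a one-line read-off.)

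For comparison, the paper avoids claiming $W\mid L$ directly. In the extension cases it observes that your Hom-equalities force $\codim(D,D')=\codim(D\oplus W,D'\oplus W)$ and then invokes Bongartz's Cancellation Theorem from \cite{Bo1}, which transfers minimality of $D<_{\deg}D'$ to $D\oplus W<_{\deg}D'\oplus W$ under precisely this codimension hypothesis. In the one remaining case $D=\U_{t,s}<_{\mdeg}\U_{s,t}=D'$ (where $D'$ is indecomposable, so no extension structure is available) it assumes $M<_{\mdeg}L\leq_{\deg}M'$ and runs a case analysis over the possible minimal disjoint pieces of the step $M<_{\mdeg}L$, using Proposition \ref{combihom} to contradict $L\leq_{\deg}M'$ in each case; its Claims 1 and 2, which are the closest analogue of your ``$\add W\subseteq\add L$'' assertion, are proved only by this enumeration and only for arrows $l\to k$ with $k,l\notin\{s,t\}$. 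To repair your proof you would either need to supply an argument of comparable length establishing $W\mid L$ from the full degeneration order, or replace that step by the cancellation theorem plus the explicit case analysis, as the paper does.
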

\begin{proof}
Assume, the degeneration $D<_{\deg}D'$ is obtained from extensions as in Proposition \ref{mindisj}.
We extract the argumentation from \cite[Theorem 4]{Bo1}.\\[1ex]
Let $D\oplus W<_{\mdeg}U\oplus V\oplus W$ be a minimal degeneration in $\rep^{\inj}_{K}(\Q_n,I_2)(\dfs)$, such that $U$ and $V$ are indecomposables and there exists an exact sequence $0\rightarrow U\rightarrow D\rightarrow V\rightarrow 0$.\\[1ex]
Let $X$ be a direct summand of $W$, such that $[X,D']>[X,D]$. Then the exact sequence \[0\rightarrow U\rightarrow D\rightarrow V\rightarrow 0\] yields the existence of an exact sequence
\[0\rightarrow \Hom(X,U)\rightarrow \Hom(X,D)\rightarrow \Hom(X,V)\rightarrow \Ext^1(X,U)\rightarrow \Ext^1(X,D),\] such that the last map is not injective.\\[1ex]
 Thus, there exists a representation $Y$ and an exact sequence $0\rightarrow U\rightarrow Y\rightarrow X\rightarrow 0$, such that the pushout sequence splits and we obtain the commutative diagram 
\begin{displaymath}
\begin{array}{lllllllllll}
 &  & 0 &  & ~~~~0 &  &  &  &  &  &  \\ 

 &  & \downarrow &  & ~~~\downarrow &  &  &  &  &  &  \\ 
0 & \rightarrow & U & \rightarrow & ~~~~Y & \rightarrow & X & \rightarrow & 0 &  &  \\ 
 &  & \downarrow &  & ~~~\downarrow &  & \downarrow &  &  &  &  \\ 
0 & \rightarrow & D & \rightarrow & D\oplus X & \rightarrow & X & \rightarrow & 0 &  &  \\  
&  & \downarrow &  & ~~~\downarrow &  &  &  &  &  &  \\ 
 &  & V & \rightarrow & ~~~~V &  &  &  &  &  &  \\ 
 &  & \downarrow &  & ~~~\downarrow &  &  &  &  &  &  \\ 
 &  & 0 &  & ~~~~0 &  &  &  &  &  &  \\ 
\end{array}
\end{displaymath}
with $D\oplus X<_{\deg}V\oplus Y<_{\deg}V\oplus U\oplus X$. We denote by $Z$ the representation that fulfills $W=X\oplus Z$ and obtain
\[D\oplus W<_{\deg}V\oplus Y\oplus Z<_{\deg}D'\oplus W,\] a contradiction.
A dual argument contradicts the assumption  $[D',X]>[D,X]$ for a direct summand $X$ of $W$.\\[1ex]
Assume $[X,D]=[X,D']$ and $[D,X]=[D',X]$ holds true for every direct summand $X$ of $W$. Then $\codim(D,D')=\codim(D\oplus W,D'\oplus W)$ and the Cancellation Theorem of \cite{Bo1} yields that the degeneration $D\oplus W<_{\deg}D'\oplus W$ is minimal if and only if $D<_{\deg}D'$ is.\\[1ex]
The only minimal, disjoint degeneration left is $D:=\U_{s,t}<_{\mdeg}\U_{t,s}=:D'$, where $s<t$.
The theorem then reads as follows: $D\oplus W<_{\deg}D'\oplus W$ is minimal if and only if every indecomposable direct summand $\V_k$ of $W$ fulfills $\delta_{s<k<t}=0$ and if every indecomposable direct summand $\U_{k,l}$ of $W$ fulfills $\delta_{k<t}\delta_{s<l<t}+\delta_{s<k<t}\delta_{t<l}=0$.\\[1ex]
If $s<k<t$, then the degeneration $\U_{t,s}\oplus \V_{k}<_{\mdeg}\U_{s,t}\oplus\V_{k}$ is not minimal since
\[\U_{t,s}\oplus\V_{k} <_{\deg} \U_{k,s}\oplus\V_t <_{\deg}\U_{s,t}\oplus\V_{k}\]
are proper degenerations.\\[1ex]
If $s\neq k< t$ and $s< l<t$  (or $s<k < t$ and $l>t$, respectively), then the degeneration $\U_{t,s}\oplus \U_{k,l}<_{\deg}\U_{s,t}\oplus\U_{k,l}$ is not minimal, since 
\[\U_{t,s}\oplus\U_{k,l} <_{\deg} \U_{k,s}\oplus\U_{t,l} <_{\deg}\U_{s,t}\oplus\U_{k,l}\]
\[(\U_{t,s}\oplus\U_{k,l} <_{\deg} \U_{k,s}\oplus\U_{t,l} <_{\deg}\U_{s,t}\oplus\U_{k,l},~ \textrm{respectively})\] 
are proper degenerations.\\[1ex]
Consider $W\in\rep^{\inj}_{K}(\Q_n,I_2)$, such that $M\coloneqq\U_{t,s}\oplus W<_{\deg}M'\coloneqq\U_{s,t}\oplus W$ in $\rep^{\inj}_{K}(\Q_n,I_2)(\dfs)$ and such that every direct summand of $W$ fulfills the assumptions.\\[1ex] If the degeneration $M<_{\deg}M'$ is not minimal, then there exists a representation $L$ fulfilling $M<_{\deg}L<_{\deg}M$. Without loss of generality, we can assume $M<_{\mdeg}L$.\\[1ex]
Then $[\V_k,M]\leq [\V_k,L]\leq [\V_k,M']$ for all $k$ and we can translate the statement as follows: The source vertices to the left of $s-1$ and to the right of $t$ coincide in $\olp(M)$, $\olp(L)$ and $\olp(M')$. Also, the number of arrows coincides in all three link patterns, since  $[\V_n,M]= [\V_n,L]= [\V_n,M']$.\\[1ex]
\textbf{Claim 1}:
 Let $\U_{k,l}$ be a direct summand of $M$, $L$ or $M'$. If $l<s$ or ($k<s $ and $l> t$) or ($k>t$ and $l>t$), then $\U_{k,l}$ is a direct summand of $M$, $L$ and $M'$.\\[1ex]
The proof of Claim 1 follows directly from Proposition \ref{combihom}.\\[1ex]
\textbf{Claim 2}:
Let $\U_{k,l}$ be a direct summand of $M$, $L$ or $M'$. If $t< k$ and $s< l<t$, then $\U_{k,l}$ is a direct summand of $M$, $L$ and $M'$.
\begin{proof}[Proof of Claim 2]
Let $t< k$ and $s< l<t$ for two integers $k$ and $l$.\\[1ex]
First, we assume that $U_{k,l}$ is a direct summand of $M$, but not a direct summand of $L$. 
 Since $M<_{\mdeg}L$, the indecomposable $\U_{k,l}$ must be changed by some minimal, disjoint part of the degeneration. The only possibilities for a change like that are the following:\\[1ex]
\textbf{1st case:} The indecomposable $\U_{k',l}$ is a direct summand of $L$, such that $k\neq k'$.\\[0.4ex]
1.1. The minimal, disjoint part is $\U_{k,l}\oplus\V_{k'}<_{\mdeg}\U_{k',l}\oplus \V_k$, such that $k'<k$:\\[0.2ex]
 The indecomposable $\V_{k'}$ can only be a direct summand of $M$ if $k'<s$ or $k'>t$.\\
 If $k'<s$, we obtain $[\U_{k',t},M]<[\U_{k',t},L]$ and if $k'>t$, we obtain $[\U_{k',l},M]<[\U_{k',l},L]$, a contradiction.\\[0.4ex]
1.2. The minimal, disjoint part is $\U_{k,l}\oplus\U_{k',l'}<_{\mdeg}\U_{k',l}\oplus \U_{k,l'}$, such that $k<k'$ and $l'<l$, or such that $k'<k$ and $l<l'$:\\[0.2ex]
 The indecomposable $\U_{k',l'}$ can only be a direct summand of $M$ if $k'>t$ or $l'<s$, or if $k'<s$ and $l'>t$. As has been shown in claim 1, every indecomposable $\U_{i,j}$ with $j<s$, or with $j>t$ and $i<s$ is either a direct summand of $M$, $L$ and $M'$ or a direct summand of none of them. Thus, $k'>t$ and if $k<k'$ and $l'<l$, we obtain $[\U_{k,l'},M]<[\U_{k,l'},L]$. If $k'<k$ and $l<l'$, we obtain $[\U_{k',l},M]<[\U_{k',l},L]$, a contradiction.\\[0.4ex]
1.3. The minimal, disjoint part is $\U_{k,l}\oplus\U_{l',k'}<_{\mdeg}\U_{l',k}\oplus \U_{k',l}$:\\[0.2ex]
 The indecomposable $\U_{l',k'}$ can only be a direct summand of $M$ if $l'>t$ or $k'<s$, or if $l'<s$ and $k'>t$. As has been shown in claim 1, every indecomposable $\U_{i,j}$ with $j<s$, or with $j>t$ and $i<s$ is either a direct summand of $M$, $L$ and $M'$ or a direct summand of none of them.\\[1ex] Thus, $l'>t$ and the only cases possible are $l<l'<k'<k$ and $l<k'<k<l'$. We immediately obtain $[\U_{k',l},M]<[\U_{k',l},L]$, a contradiction.\\[0.4ex]
\textbf{2nd case:} The indecomposable $\U_{k,l'}$ is a direct summand of $L$, such that $l\neq l'$.\\[0.4ex]
2.1. The minimal, disjoint part is $\U_{k,l}\oplus\V_{l'}<_{\mdeg}\U_{k,l'}\oplus \V_l$, such that $l<l'$:\\[0.2ex]
 The indecomposable $\V_{l'}$ can only be a direct summand of $M$ if $l'<s$ or $l'>t$.\\
 Thus, $l'>t$ and we obtain  $[\U_{t,l},M]<[\U_{t,l},L]$, a contradiction.\\[0.4ex]
2.2. The minimal, disjoint part is $\U_{k,l}\oplus\U_{l',k'}<_{\mdeg}\U_{k,l'}\oplus \U_{l,k'}$:\\[0.2ex]
 The indecomposable $\U_{l',k'}$ can only be a direct summand of $M$ if $l'>t$ or $k'<s$, or if $l'<s$ and $k'>t$. As has been shown in claim 1, every indecomposable $\U_{i,j}$ with $j<s$, or with $j>t$ and $i<s$ is either a direct summand of $M$, $L$ and $M'$ or a direct summand of none of them, thus, $l'>t$. But then we obtain $[\U_{t,l},M]<[\U_{t,l},L]$ , a contradiction.\\[0.4ex]
\textbf{3rd case:} The indecomposable $\U_{l,k}$ is a direct summand of $L$.\\[0.4ex]
Then $[\U_{1,t},M]<[\U_{1,t},L]$ if $s>1$ and $[\U_{t,n},M]<[\U_{t,n},L]$ if $t<n$. Of course, if $s=1$ and $t=n>2$, no representation $W$ as given in the assumption can exist at all, a contradiction.\\[0.4ex]
The assumption that $U_{k,l}$ is a direct summand of $L$, but not a direct summand of $M$ can be contradicted by a similar argumentation.\qedhere
\end{proof}
Claim 1 and Claim 2 show that all arrows $l\rightarrow k$ with $b_{k,l}(M)=b_{k,l}(M')$ and $k,l\notin\{s,t\}$ coincide in $\olp(M)$, $\olp(L)$ and $\olp(M')$. 
The minimal, disjoint piece of the degeneration $D\oplus W<_{\mdeg}L$, therefore, has to be one of the following three.
\begin{itemize}
 \item $\U_{t,s} <_{\mdeg}\U_{s,t}$: Then $L\cong M'$, a contradiction to the assumption $L<_{\deg} M'$.
\item $\U_{t,s}\oplus \V_{k'} <_{\mdeg}\U_{t,k'}\oplus \V_s$ with $k'>t$: In this case $\U_{t,k'}\oplus \V_s\nless_{\deg}\U_{s,t}\oplus \V_{k'}$ and therefore  $L\nless_{\deg} M'$, a contradiction.
\item $\U_{t,s}\oplus \V_{k'} <_{\mdeg}\U_{k',s}\oplus \V_t$ with $k'<s$: In this case $\U_{k',s}\oplus \V_t\nless_{\deg}\U_{s,t}\oplus \V_{k'}$  and therefore  $L\nless_{\deg} M'$, a contradiction.
\end{itemize}
Since we obtain a contradiction in each case, the degeneration $M<_{\deg}M'$ is minimal.
\end{proof}
Note that in the setup of Theorem \ref{indecgeneral}, the condition $[X,D]-[X,D']=0$ is sufficient in most cases. The only exceptions are the minimal, disjoint degenerations 
 $D=\U_{s,t}<_{\mdeg}\V_s\oplus \V_t=D'$, such that $s<t$, and $D=\U_{r,t}\oplus\V_s<_{\mdeg}\U_{s,t}\oplus\V_r=D'$, such that $s<r$.\\[1ex]
 The concrete minimal degenerations are obtained easily from Proposition \ref{dimhom}. Furthermore, each minimal degeneration is of codimension $1$ (which is, as well, clear from the theory of spherical varieties, see \cite{Br1}; a concrete proof is given in \cite{Tim}).

\subsubsection[Minimal singularities in B-orbit closures]{Minimal singularities in $B$-orbit closures}\label{singularities}
Since the bijection of Lemma \ref{bijection} preserves types of singularities, we consider singularities in $R_{\dfs}^{\inj}(Q_n,I_2)$ in order to examine singularities in the $B$-orbit closures in $\N_n^{(2)}$.\\[1ex]
We denote a representation in $\rep^{\inj}_{K}(\Q_n,I_2)$ by a capital letter and the corresponding point in $R_{\dfs}^{\inj}(Q,I)$ by the same small letter.\\[1ex]
In the following, minimal singularities are discussed, that is, given a minimal degeneration $M<_{\mdeg}M'$, we examine if $m'$ is a singularity in $\overline{\Orb_M}$, where $M\in \rep^{\inj}_{K}(\Q_n,I_2)(\dfs)$. 
Note that if a point $m'$ is contained in the singular locus, then every $\GL_{\dfs}$-conjugate of $m'$ is contained as well. Therefore, it suffices to consider representations in normal form.\\[1ex]
Given a minimal degeneration $M<_{\mdeg}M'$ in $\rep^{\inj}_{K}(\Q_n,I_2)(\dfs)$, we know that \mbox{$M=D\oplus W$} and $M'=D'\oplus W$, such that $D$ and $D'$ are disjoint and $D<_{\mdeg}D'$ is a minimal, disjoint degeneration  and $\codim(M,M')=\codim(D,D')=1$.\\[1ex]
Furthermore, $[X,D]=[X,D']$ and $[D,X]=[D',X]$ for every indecomposable direct summand $X$ of $W$. Of course, then $[X,M]=[X,M']$ and $[M,X]=[M',X]$ holds true as well.\\[1ex]
 The following theorem is due to K. Bongartz (see \cite{Bo1}) and yields the reduction to minimal, disjoint degenerations; we formulate it for the setup given above. 
\begin{theorem}\label{cancelsing}
 Let $D<_{\deg}D'$ and $M=D\oplus U<_{\mdeg}D'\oplus U=M'$ be degenerations of the same codimension.
 Then the two pointed varieties $(\overline{\Orb_{D\oplus U}},d'\oplus u)$ and $(\overline{\Orb_D},d')$ are (very) smoothly equivalent.
\end{theorem}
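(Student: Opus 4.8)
The plan is to obtain the statement as a direct application of Bongartz's cancellation theorem \cite{Bo1}, which is established for module varieties over an arbitrary finite-dimensional $K$-algebra; here the algebra is $\A(n,2)=K\Q_n/I_2$ and the relevant variety is the affine module variety $R_{\dfs}(\Q_n,I_2)$ with its $\GL_{\dfs}$-action. A preliminary step is to observe that nothing is lost by working in this ambient affine variety rather than in $R_{\dfs}^{\inj}(\Q_n,I_2)$, which is a $\GL_{\dfs}$-stable open subvariety (injectivity of a linear map being an open condition): the subcategory $\rep^{\inj}_K(\Q_n,I_2)$ is closed under direct sums (so $D\oplus U$, $D'\oplus U$ and the base point $d'\oplus u$ all stay in it), and smooth equivalence of pointed varieties is a local notion, so the local geometry of $\overline{\Orb_{D\oplus U}}$ at $d'\oplus u$, and of $\overline{\Orb_D}$ at $d'$, is the same whether the closures are formed in $R_{\dfs}^{\inj}(\Q_n,I_2)$ or in $R_{\dfs}(\Q_n,I_2)$. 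I would also recall at this point that two pointed varieties $(X,x)$ and $(Y,y)$ are smoothly equivalent if there is a pointed variety $(Z,z)$ together with smooth morphisms $Z\to X$, $Z\to Y$ taking $z$ to $x$ and to $y$, the adjective ``very'' requiring in addition that the two relative dimensions coincide.

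The main step is then to match our hypotheses with those of \cite{Bo1}. Since $\dim\Orb_M=\dim\GL-[M,M]$ (Proposition \ref{stabpar} and Section \ref{dimsubpar}), the codimension of any degeneration $M\leq_{\deg}M'$ equals $[M',M']-[M,M]$; expanding $[D\oplus U,D\oplus U]=[D,D]+[D,U]+[U,D]+[U,U]$ and the analogous identity for $D'$ yields
\[\codim(D\oplus U,D'\oplus U)-\codim(D,D')=([D',U]-[D,U])+([U,D']-[U,D]).\]
By upper semicontinuity of $\dim\Hom(U,-)$ and of $\dim\Hom(-,U)$ along a degeneration, both summands on the right-hand side are nonnegative, so the hypothesis that $D\oplus U<_{\mdeg}D'\oplus U$ has the same codimension as $D\leq_{\deg}D'$ is equivalent to $[D,U]=[D',U]$ and $[U,D]=[U,D']$, and by additivity over the indecomposable direct summands of $U$ this is exactly the condition $[X,D]=[X,D']$, $[D,X]=[D',X]$ recalled just before the statement. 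This is precisely the situation in which Bongartz's theorem cancels the common summand $U$, and its conclusion is precisely that $(\overline{\Orb_{D\oplus U}},d'\oplus u)$ and $(\overline{\Orb_D},d')$ are very smoothly equivalent.

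For completeness I would indicate the mechanism underlying \cite{Bo1}: from the common summand $U$ one constructs a smooth $\GL$-equivariant morphism $f\colon O\to R'$, where $O$ is a suitable $\GL$-stable open subset of the larger module variety --- essentially the locus of representations admitting $U$ as a direct summand --- and $R'$ is the module variety in which $D$ and $D'$ live, with $f(d'\oplus u)=d'$. The equal-codimension, equivalently $\Hom$-equality, hypothesis is exactly what forces $f^{-1}(\overline{\Orb_D})=\overline{\Orb_{D\oplus U}}\cap O$, so that $f$ restricts to a morphism $\overline{\Orb_{D\oplus U}}\cap O\to\overline{\Orb_D}$ which, being a base change of the smooth morphism $f$, is itself smooth; together with the open immersion $\overline{\Orb_{D\oplus U}}\cap O\hookrightarrow\overline{\Orb_{D\oplus U}}$ this furnishes the common roof witnessing the very smooth equivalence. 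The main obstacle is thus not mathematical but a matter of careful bookkeeping: one must align the precise formulation of \cite{Bo1} with the present set-up --- that $D\leq_{\deg}D'$ need \emph{not} be minimal whereas $D\oplus U<_{\mdeg}D'\oplus U$ is, that $\A(n,2)$ lies within the generality in which \cite{Bo1} operates, and that the correct strength of conclusion to cite is \emph{very} smooth equivalence, not merely smooth equivalence. No new mathematics beyond Bongartz's result is needed; the theorem is recorded here only to fix the precise form in which it is applied below.
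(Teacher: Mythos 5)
Your proposal is correct and matches the paper's treatment: the paper gives no proof of Theorem \ref{cancelsing} at all, simply attributing it to Bongartz \cite{Bo1}, which is exactly the reduction you carry out. Your additional bookkeeping --- the identity $\codim(D\oplus U,D'\oplus U)-\codim(D,D')=([D',U]-[D,U])+([U,D']-[U,D])$ together with upper semicontinuity, showing the equal-codimension hypothesis is equivalent to the $\Hom$-equalities, and the remark that passing between $R_{\dfs}^{\inj}(\Q_n,I_2)$ and the ambient module variety is harmless since smooth equivalence is local and the injective locus is open and $\GL_{\dfs}$-stable --- is accurate and consistent with how the paper uses the result.
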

Thus, the pointed varieties $(\overline{\Orb_D},d')$ and $(\overline{\Orb_M},m')$ are (very) smoothly equivalent. Therefore, in order to classify the minimal singularities, it suffices to describe singularities arising from the minimal, disjoint degenerations in \cite[Theorem 4.6]{BoRe}.\\[1ex]
K. Bongartz  proves the following theorem (see \cite{Bo1}) which can easily be applied in the setup above.
\begin{theorem}\label{smoothext}
Let  $M<_{\mdeg}M'=U\oplus V$ be a minimal, disjoint degeneration of codimension one.
 Then $\overline{\Orb_M}$ is smooth at $m'$.
\end{theorem}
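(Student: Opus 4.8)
The plan is to reduce the statement to a local model at $m'$ and then to a single tangent-space estimate, in the spirit of Bongartz \cite{Bo1}. First I would invoke Proposition~\ref{mindisj}: since $M<_{\mdeg}M'$ is minimal and disjoint and the target $M'=U\oplus V$ is already written as a sum of indecomposables, the proposition forces a non-split short exact sequence, which after interchanging $U$ and $V$ if necessary may be taken to be $0\to U\to M\to V\to 0$; it is classified by a nonzero class $\eta\in\Ext^1(V,U)\subseteq\Ext^1(M',M')$. The codimension-one hypothesis together with the standard identity $\dim\Orb_N=\dim\GL_{\dfs}-[N,N]$ gives $\dim\overline{\Orb_M}=\dim\Orb_{M'}+1$, and since $\overline{\Orb_M}$ is irreducible, smoothness at $m'$ is equivalent to the single inequality $\dim_K T_{m'}\overline{\Orb_M}\le\dim\Orb_{M'}+1$ (the reverse inequality being automatic).

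To estimate this tangent space I would pass to a Luna slice $S$ at $m'$ transverse to $\Orb_{M'}$, so that étale-locally near $m'$ the variety $R_{\dfs}^{\inj}(\Q_n,I_2)$ looks like $\Orb_{M'}$ times $S$, with $T_0S$ canonically a subspace of the normal space $\Ext^1(M',M')$. Under this picture $\overline{\Orb_M}$ corresponds to $\Orb_{M'}$ times the closed curve $Z:=\overline{\Orb_M}\cap S$, which is one-dimensional and passes through the origin $0\leftrightarrow m'$, and $\dim T_{m'}\overline{\Orb_M}=\dim\Orb_{M'}+\dim T_0Z$. Scaling the extension class produces a morphism $t\mapsto E_{t\eta}$ from the affine line into $R_{\dfs}^{\inj}(\Q_n,I_2)$ (the middle term of the extension of $V$ by $U$ with class $t\eta$) with $E_{t\eta}\cong M$ for $t\neq 0$ and $E_0=M'$; this curve lies in $\overline{\Orb_M}$, and projected to $S$ it shows that the line $K\eta$ is contained in $Z$. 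Thus $Z$ is a one-dimensional curve through the origin containing a line, and the whole statement comes down to showing $\dim T_0Z\le 1$, i.e.\ that $Z$ is smooth at $0$ (hence equal to that line).

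I expect the smoothness of $Z$ at the origin to be the main obstacle: a priori the locus of directions along which $M'$ degenerates back to $M$ could be a singular curve, e.g.\ a cusp, and this must be excluded. Here I would use the full force of the hypotheses: minimality of $M<_{\mdeg}M'$ means no orbit lies strictly between $\Orb_M$ and $\Orb_{M'}$, which prevents $Z$ from acquiring an extra branch or a higher-order tangency at $0$; disjointness ensures the extension datum genuinely involves both $U$ and $V$; and the Cancellation Theorem~\ref{cancelsing} reduces the germ $(\overline{\Orb_M},m')$ to the model built from the disjoint pair $(U,V)$ and the class $\eta$ alone. Feeding the short exact sequence $0\to U\to M\to V\to 0$ into the functors $\Hom(-,U)$, $\Hom(V,-)$, $\Hom(M,-)$, $\Hom(-,M)$ and reading off dimensions via Proposition~\ref{dimhom} then pins $Z$ down to the line $K\eta$, so that $\dim T_{m'}\overline{\Orb_M}=\dim\Orb_{M'}+1=\dim\overline{\Orb_M}$, and $\overline{\Orb_M}$ is smooth at $m'$.
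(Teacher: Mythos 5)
The paper offers no proof of Theorem \ref{smoothext} at all: it is quoted as a result of K.~Bongartz and taken from \cite{Bo1}, so there is no argument in the text to compare yours against line by line. Judged on its own, your sketch has the right overall shape --- reduce via Proposition \ref{mindisj} to an exact sequence $0\to U\to M\to V\to 0$ with class $\eta\in\Ext^1(V,U)$, pass to a transversal slice $S$ at $m'$, note that the codimension-one hypothesis makes $Z=\overline{\Orb_M}\cap S$ a curve through the origin containing (the projection of) the line $K\eta$, and conclude by bounding $\dim T_0Z$ --- and this is indeed the spirit of Bongartz's argument. (Two smaller caveats: $\Orb_{M'}$ is not closed and the ambient variety $R_{\dfs}^{\inj}(\Q_n,I_2)$ need not be smooth at $m'$, so the Luna slice theorem does not apply as stated and must be replaced by the ad hoc transversal-slice/associated-fibration construction; and the identification of $T_0S$ inside $\Ext^1(M',M')$ needs justification for an algebra with relations.)

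The genuine gap is at the decisive step, $\dim T_0Z\le 1$, which is exactly the content of the theorem and which your proposed justifications do not deliver. Minimality of the degeneration only says that no orbit lies strictly between $\Orb_M$ and $\Orb_{M'}$; it does not exclude $Z$ having a singular germ at $0$. A cuspidal curve has a single branch and only two orbits under a suitable torus action, yet a two-dimensional tangent space at the origin; and nothing you say rules out $Z$ being a union of two distinct lines through $0$ (for instance one coming from a class in $\Ext^1(V,U)$ and one from $\Ext^1(U,V)$), each of whose punctured germs consists of points isomorphic to $M$, again with no intermediate orbit but with $\dim T_0Z=2$. Likewise, applying $\Hom(-,X)$ and $\Hom(X,-)$ to the exact sequence and reading off dimensions via Proposition \ref{dimhom} controls which isomorphism classes occur in $\overline{\Orb_M}$, i.e.\ the set-theoretic orbit stratification, but carries no information about the scheme structure or the tangent space of $Z$ at the origin. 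Closing this gap requires the actual analysis of \cite{Bo1}, which shows that in the codimension-one extension case the germ of $Z$ at $0$ is a single reduced line; as written, your proposal assumes the theorem at the point where it has to be proved.
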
 
\begin{corollary}\label{singext}
 For each minimal, disjoint degeneration $D<_{\mdeg}D'$ given in Proposition \ref{mindisj} by extensions, the point $d'$ is smooth in $\overline{\Orb_D}$.
\end{corollary}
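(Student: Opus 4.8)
The plan is to obtain this as a direct corollary of Bongartz's smoothness criterion, Theorem \ref{smoothext}, after checking that the degenerations in question are presented in exactly the form that theorem requires. First I would recall from Proposition \ref{mindisj} that a minimal, disjoint degeneration $D<_{\mdeg}D'$ which arises ``by extensions'' is precisely one where $D'\cong U\oplus V$ with $U$ and $V$ indecomposable, together with a non-split short exact sequence $0\to U\to D\to V\to 0$ (or the same with $U$ and $V$ interchanged). In particular $D'$ is a direct sum of two indecomposables, which matches the hypothesis ``$M'=U\oplus V$'' of Theorem \ref{smoothext} verbatim; and ``disjoint'' in Proposition \ref{mindisj} means exactly that $D$ and $D'=U\oplus V$ share no common indecomposable summand, so the disjointness hypothesis of Theorem \ref{smoothext} is also met.

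The second ingredient is that every such degeneration has codimension one. This has already been recorded in the preceding subsection: each minimal degeneration in $\rep^{\inj}_{K}(\Q_n,I_2)$ is of codimension one, either by the general theory of spherical varieties (\cite{Br1,Tim}) or, concretely, by computing $\codim(\Orb_{D'},\overline{\Orb_D})=[D',D']-[D,D]$ from the Hom-dimension formulas of Proposition \ref{dimhom} for each of the finitely many minimal, disjoint degenerations catalogued in \cite[Theorem 4.6]{BoRe}; in each case the difference is $1$. (Equivalently, in each extension case one checks that $\Ext^1(V,U)$, resp. $\Ext^1(U,V)$, is one-dimensional, but this is subsumed by the statement already cited.)

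Combining these, for each minimal, disjoint degeneration $D<_{\mdeg}D'$ given by extensions we are exactly in the situation of Theorem \ref{smoothext} — a minimal, disjoint degeneration of codimension one whose target is a sum of two indecomposables — and that theorem yields at once that $\overline{\Orb_D}$ is smooth at $d'$. I do not expect any real obstacle here: the only substantive point is the bookkeeping that the ``extension'' alternative of Proposition \ref{mindisj} indeed falls under the codimension-one statement and feeds correctly into Theorem \ref{smoothext}; everything else is a direct citation.
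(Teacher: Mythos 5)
Your proposal is correct and follows exactly the route the paper intends: the corollary is stated as an immediate consequence of Theorem \ref{smoothext}, using that the extension-type degenerations of Proposition \ref{mindisj} have the form $D<_{\mdeg}D'=U\oplus V$ with $U,V$ indecomposable and that all minimal degenerations here have codimension one, as recorded in the preceding subsection. The paper offers no further argument, so your bookkeeping of the hypotheses is precisely what is needed.
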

 In case of the minimal, disjoint degeneration $\U_{t,s}<_{\mdeg}\U_{s,t}$ for $s<t$, the question about minimal singularities is still open - we start the discussion in the following. Let us define $V_i = \langle e_1,...,e_i\rangle$ to be the span of the first $i$ and $V_{\geq i}\coloneqq\langle e_i,\punkte,e_n\rangle$ of the last $n-i+1$ coordinate vectors of $K^n$. 
\begin{proposition}\label{descrequ}
Let $N\in\N^{(2)}$, then $B.N$ is given by matrices $X$ fulfilling the equations $X^{2}=0$ and \mbox{$\dim(X\cdot V_j\cap V_{\geq i})=\dim(N\cdot V_j\cap V_{\geq i})$} for all $i,j\in\{1,\punkte,n\}$.
\end{proposition}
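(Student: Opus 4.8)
The statement is an orbit-equality: I want to show that the $B$-orbit $B.N$ coincides with the set of $X \in \N_n^{(2)}$ satisfying the rank conditions $\dim(X\cdot V_j \cap V_{\geq i}) = \dim(N\cdot V_j\cap V_{\geq i})$ for all $i,j$. The inclusion ``$\subseteq$'' is the easy direction: if $X = gNg^{-1}$ for some $g \in B$, then $g$ preserves the flag $V_1 \subset V_2 \subset \cdots$ (since $B$ is the upper-triangular Borel) and, because $g$ is upper-triangular and invertible, it also preserves each $V_{\geq i}$ only up to the lower part — so I should instead argue via $\dim(gN g^{-1}\cdot V_j \cap V_{\geq i})$. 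Here $g^{-1}V_j = V_j$, so $X\cdot V_j = g(N\cdot V_j)$, and although $g$ does not preserve $V_{\geq i}$, I can use that $g$ induces an isomorphism on the quotient $K^n/V_{i-1}$ respecting the image of $V_{\geq i}$; more cleanly, $\dim(g W \cap V_{\geq i}) = \dim W - \dim(\text{image of }W\text{ in }K^n/V_{\geq i})$ and the projection $K^n \to K^n/V_{\geq i} = K^{i-1}$ is $B$-equivariant for the quotient Borel action, so $\dim(\text{image of }gW) = \dim(\text{image of }W)$. This gives the numerical invariance, hence ``$\subseteq$''.

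**The hard direction and the strategy.** For ``$\supseteq$'' I would not attack it directly but instead invoke the machinery already assembled in the paper. By Theorem \ref{paraboliccase} (the Borel case, i.e.\ $P = B$, $p = n$), the $B$-orbits in $\N_n^{(2)}$ are in bijection with oriented link patterns of size $n$, and by Proposition \ref{combihom} and the formulas for $a_k$ and $b_{k,l}$, the numbers $[\V_k, M^N]$ and $[\U_{k,l}, M^N]$ are combinatorial read-offs from $\olp(N)$. The plan is: first, translate the quantities $\dim(N\cdot V_j \cap V_{\geq i})$ into these representation-theoretic $\Hom$-dimensions (or equivalently into the counting data of $\olp(N)$); second, observe that an oriented link pattern is completely determined by this data; third, conclude that if $X$ and $N$ have equal invariants then $\olp(X) = \olp(N)$, hence $B.X = B.N$. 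The key bridge is an identity of the shape: the number of arrows in $\olp(N)$ with source $> $ (something) and target $\geq i$, plus fixed-vertex contributions, equals a difference of two of the $\dim(N\cdot V_j \cap V_{\geq i})$ quantities — essentially the same bookkeeping underlying Proposition \ref{combihom} but phrased via the standard flag rather than via the indecomposables.

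**Key steps in order.** (1) Show $X \in B.N \Rightarrow X^2 = 0$ and the rank equalities, via the equivariance argument above. (2) Fix $X \in \N_n^{(2)}$ with $X^2 = 0$ and the rank equalities; since $X$ is $2$-nilpotent it lies in some $B$-orbit, so $\olp(X)$ is defined. (3) Express $\dim(X\cdot V_j \cap V_{\geq i})$ purely in terms of $\olp(X)$: an arrow $b \to a$ of $\olp(X)$ (meaning $X e_b = $ a vector supported appropriately, in normal form $X e_b = e_a$) contributes to $X\cdot V_j \cap V_{\geq i}$ exactly when its source $b \leq j$ and its target $a \geq i$; so $\dim(X\cdot V_j \cap V_{\geq i}) = \#\{\text{arrows }b\to a \text{ in }\olp(X) : b \leq j,\ a\geq i\}$ (here one uses $X^2 = 0$ so that $X\cdot V_j$ is spanned by the images of coordinate vectors and these images are linearly independent). (4) Observe that letting $i,j$ range over all pairs recovers, for each ordered pair $(a,b)$, whether $\olp(X)$ has an arrow $b \to a$ (take differences of consecutive values), and the fixed vertices are then forced by the dimension-vector constraint (each vertex incident to exactly $\leq 1$ arrow). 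Hence $\olp(X)$ is determined by the data, so $\olp(X) = \olp(N)$ and $X \in B.N$.

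**Main obstacle.** The only genuinely delicate point is step (3): making rigorous the claim that $\dim(X\cdot V_j \cap V_{\geq i})$ equals a clean count of arrows of $\olp(X)$. For a representative $N$ in normal form (the matrix $N(\olp)$ with $0/1$ entries) this is immediate, but $X$ itself need not be in normal form — it is an arbitrary element of the orbit. I would handle this by noting both sides of the desired equality are $B$-invariant: the left side by step (1)'s equivariance argument, and the right side because $\olp$ is a $B$-orbit invariant. So it suffices to verify step (3) for the normal-form representative, where $X\cdot V_j = \langle e_a : \exists\, b \leq j \text{ with } b\to a\rangle$ and the intersection with $V_{\geq i}$ just discards those $e_a$ with $a < i$; linear independence of the relevant $e_a$ is automatic. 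After that reduction the argument is bookkeeping, and the equality $\olp(X) = \olp(N) \Leftrightarrow B.X = B.N$ is exactly Theorem \ref{paraboliccase} for $p = n$.
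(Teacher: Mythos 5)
Your overall strategy is the same as the paper's: reduce to representatives in normal form by arguing that the data $\dim(X\cdot V_j\cap V_{\geq i})$ is constant on $B$-orbits, and then check that a normal-form matrix (equivalently, its oriented link pattern) is determined by this data. Your steps (3) and (4) --- reading off $\dim(N(\olp)\cdot V_j\cap V_{\geq i})$ as the number of arrows with source $\leq j$ and target $\geq i$, and recovering each individual arrow by taking double differences in $i$ and $j$ --- are correct and in fact supply the detail that the paper's one-line proof omits.

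The gap is in step (1). The group $B$ is upper triangular, so it preserves each $V_k=\langle e_1,\dots,e_k\rangle$ but does \emph{not} preserve $V_{\geq i}$; consequently $K^n/V_{\geq i}$ carries no induced $B$-action and the projection $K^n\to K^n/V_{\geq i}$ is not $B$-equivariant, so your justification of the invariance of $\dim(X\cdot V_j\cap V_{\geq i})$ breaks down. Indeed, read literally this quantity is not an orbit invariant: for $N=E_{2,1}$ and $g=\left(\begin{smallmatrix}1&1\\0&1\end{smallmatrix}\right)$ one gets $X=gNg^{-1}$ with $X\cdot V_1=\langle e_1+e_2\rangle$, hence $\dim(X\cdot V_1\cap V_{\geq 2})=0$ while $\dim(N\cdot V_1\cap V_{\geq 2})=1$. (The paper's own proof asserts the invariance without argument and is subject to the same objection.) The quantity that is genuinely $B$-invariant, and that the statement must be read as, is $\dim(X\cdot V_j)-\dim(X\cdot V_j\cap V_{i-1})$, i.e.\ the rank of the submatrix of $X$ on rows $i,\dots,n$ and columns $1,\dots,j$: this is preserved under conjugation by $g\in B$ because $g$ stabilizes both $V_j$ and $V_{i-1}$, and it coincides with $\dim(X\cdot V_j\cap V_{\geq i})$ exactly when $X\cdot V_j$ is spanned by coordinate vectors, in particular for $X$ in normal form. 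With this reformulation your steps (2)--(4) go through unchanged and the argument closes.
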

\begin{proof}
The datum $\dim(N\cdot V_j\cap V_{\geq i})$ is $B$-invariant, it therefore suffices to consider matrices in normal form.
Given two matrices $N,N'$ in normal form, 
$\dim(N\cdot V_j\cap V_{\geq i})=\dim(N'\cdot V_j\cap V_{\geq i})$ holds true  for all $i,j\in\{1,\punkte,n\}$ if and only if $N=N'$.
\end{proof}
We denote by $E_{i,j}$ the $n\times n$-matrix given by $(E_{i,j})_{i,j}=1$ and $(E_{i,j})_{k,l}=0$ otherwise.
\begin{example}\label{singmin}
The point $E_{1,2}$ is smooth in the closure of $B.E_{2,1}\subseteq\N_2^{(2)}$:\\
It follows from \cite[Theorem 4.6]{BoRe} that $\N_2^{(2)}=\overline{B.E_{2,1}}=B.E_{2,1}\cup B.E_{1,2}\cup\{0\}$. Then due to Proposition \ref{descrequ}:
\begin{itemize}
 \item $B.E_{2,1}=\left\lbrace \left(\begin{array}{cc}
n_{1,1} & n_{1,2} \\ 
n_{2,1} & n_{2,2}
                       \end{array}\right)\mid~ n_{2,1}\neq 0;~ n_{1,1}+n_{2,2}=0;~ n_{1,1}n_{2,2}-n_{1,2}n_{2,1}=0\right\rbrace $
\item $B.E_{1,2}=\left\lbrace \left(\begin{array}{cc}
0 & n_{1,2} \\ 
0 & 0
                       \end{array}\right)\mid~ n_{1,2}\neq 0\right\rbrace $
\item $B.0=\left\lbrace \left(\begin{array}{ll}
0 & 0 \\ 
0 & 0
                       \end{array}\right)\right\rbrace$
\end{itemize}
The ideal 
$\langle n_{1,1}+n_{2,2},n_{1,1}n_{2,2}-n_{1,2}n_{2,1} \rangle\subset K[n_{1,1},n_{1,2},n_{2,1},n_{2,2}]$  is reduced, and we can read off the smoothness of every point contained in $\overline{B.E_{2,1}}$, except the zero-matrix, in the associated Jacobian matrix
\[J=\left( \begin{array}{llll}
1 & 0 & 0 & 1 \\ 
n_{2,2} & -n_{2,1} & -n_{1,2} & n_{1,1}
            \end{array}\right).\]
\end{example}
In the example $n=3$, minimal singularities arise. 
\begin{example}
The orbits can due to proposition \ref{descrequ} be described by equations as follows.
\begin{itemize}
\item $B.E_{2,1}=\left\lbrace \left(
\begin{array}{ccc}
n_{1,1} & n_{1,2} & n_{1,3} \\ 
n_{2,1} & n_{2,2} & n_{2,3} \\ 
0 & 0 & 0
                                \end{array}\right)\mid~ 
\begin{array}{c}
n_{2,1}\neq 0;~ n_{1,1}n_{2,2}-n_{1,2}n_{2,1}=0; \\ 
n_{1,1}n_{2,3}-n_{1,3}n_{2,1}=0;~ n_{1,1}+n_{2,2}=0
                                                        \end{array}
\right\rbrace$
\item $\overline{B.E_{2,1}}=\left\lbrace \left(
\begin{array}{ccc}
n_{1,1} & n_{1,2} & n_{1,3} \\ 
n_{2,1} & n_{2,2} & n_{2,3} \\ 
0 & 0 & 0
                                \end{array}\right)\mid~ 
\begin{array}{c}
n_{1,1}+n_{2,2}=0;~ n_{1,1}n_{2,2}-n_{1,2}n_{2,1}=0; \\ 
 n_{1,1}n_{2,3}-n_{1,3}n_{2,1}=0
                                                        \end{array}\right\rbrace $
\end{itemize}

By using the computer algebra system ``Singular'', we can show that the induced ideal \[I_{2,1}\coloneqq\left\langle n_{1,1}+n_{2,2},~ n_{1,1}n_{2,2}-n_{1,2}n_{2,1},~n_{1,1}n_{2,3}-n_{1,3}n_{2,1},~n_{3,1},~n_{3,2},~n_{3,3}\right\rangle\]
is reduced in $K[n_{1,1},n_{1,2},n_{1,3},n_{2,1},n_{2,2},n_{2,3},n_{3,1},n_{3,2},n_{3,3}]$.\\[1ex]
Thus, the associated Jacobian matrices can be computed directly. Without loss of generality, we consider the shortened ideal, deleting zero-variables.\\[1ex]
The associated Jacobian matrix is
\[J=\left( \begin{array}{cccccc}
n_{2,2} & -n_{2,1} & 0 & -n_{1,2} & n_{1,1} & 0 \\ 
n_{2,3} & 0 & n_{2,1} & -n_{1,3} & 0 & n_{1,1} \\ 
1 & 0 & 0 & 0 & 1 & 0
                     \end{array}\right),\]
and we directly see that $E_{1,2}$, $E_{1,3}$ and $E_{2,3}$ are singular points in $\overline{B.E_{2,1}}$.
\end{example}

\section[Maximal parabolic actions in nilpotency degree 3]{Maximal parabolic actions on $\N_n^{(3)}$}\label{degree3}
The only case not considered so far where the algebra associated to the action of $P$ on $\N_n^{(x)}$ is representation-finite comes up for $x=3$ and a maximal parabolic subgroup (that is, it is given by $2$ blocks) $P$ of arbitrary block-sizes $\df:=(b_1,b_2)$. We classify this case in the following before proving that it is the only finite case in Section \ref{fincrit}.
By Lemma \ref{bijection}, we need to consider representations of the algebra $\A\coloneqq K\Q_2/I_3$.

\begin{proposition}\label{indec3nilp}
 The indecomposable representations in $\rep_K(\Q_2,I_3)$ are (up to isomorphism) of the form 
\begin{center} \begin{tikzpicture}
\matrix (m) [matrix of math nodes, row sep=0.05em,
column sep=2em, text height=1.5ex, text depth=0.2ex]
{U=& K^i  & K^j \\};
\path[->]
(m-1-2) edge node[above=0.05cm] {$e_{i,j}$} (m-1-3)
(m-1-3) edge [loop right] node{$N$} (m-1-3);\end{tikzpicture}\end{center}  
for certain integers $i,j$ and nilpotent matrices $N$, where $e_{i,j}$ is the natural embedding. They are listed explicitely here; we thereby denote the dimension vectors as in the proof. 
\begin{tabular}[h]{|l|l|l||l|l|l|}
\hline
Name & 
Dim.                
& Matrix & Name & 
Dim.                
& Matrix\\ 
\hline
\begin{footnotesize}$U_{0,1}$\end{footnotesize} &
\begin{footnotesize}$0 1 $  \end{footnotesize}& 
\begin{footnotesize}$0$ \end{footnotesize}  &
\begin{footnotesize}$U_{1,4}$\end{footnotesize}& 
\begin{footnotesize}  $0 1  1 2 0 1 $ \end{footnotesize} & 
\begin{footnotesize}$E^{(4)}_{2,1}+E^{(4)}_{2,3} +E^{(4)}_{3,4}$ \end{footnotesize}  \\
\hline

\begin{footnotesize}$U_{1,1}$\end{footnotesize}& 
\begin{footnotesize}   $1 1 $   \end{footnotesize} & 
\begin{footnotesize}$0$  \end{footnotesize} &
\begin{footnotesize}$U^{(1)}_{2,4}$\end{footnotesize}& 
\begin{footnotesize}   $1 1  1 2 0 1$  \end{footnotesize} &  
\begin{footnotesize}$E^{(4)}_{3,1}+E^{(4)}_{4,2} +E^{(4)}_{4,3}$ \end{footnotesize}\\
\hline

\begin{footnotesize}$U_{1,0}$\end{footnotesize}& 
\begin{footnotesize}   $1 0 $ \end{footnotesize} &   
\begin{footnotesize}$0$ \end{footnotesize}&
\begin{footnotesize}$U^{(2)}_{2,4}$\end{footnotesize}& 
\begin{footnotesize}  $0 1  1 2 1 1  $ \end{footnotesize} & 
\begin{footnotesize}$E^{(4)}_{1,2}+E^{(4)}_{1,3} +E^{(4)}_{3,4}$ \end{footnotesize}  \\
\hline

\begin{footnotesize}$U_{0,2}$\end{footnotesize} & 
\begin{footnotesize}   $0 1 0 1 $  \end{footnotesize} & 
\begin{footnotesize}$E^{(2)}_{2,1}$\end{footnotesize}&
\begin{footnotesize}$U_{3,4}$\end{footnotesize}& 
\begin{footnotesize}   $1 1  1 2 1 1  $ \end{footnotesize} & 
\begin{footnotesize}$E^{(4)}_{1,2}+E^{(4)}_{1,4}+E^{(4)}_{4,3}$ \end{footnotesize} \\

\hline
\begin{footnotesize}$U^{(1)}_{1,2}$\end{footnotesize} & 
\begin{footnotesize}   $1 1  0 1 $  \end{footnotesize}  &
\begin{footnotesize}$E^{(2)}_{2,1}$\end{footnotesize}&
\begin{footnotesize}$U^{(1)}_{2,5}$\end{footnotesize}& 
\begin{footnotesize}   $1 2 1 2 0 1 $   \end{footnotesize}  & 
\begin{footnotesize}$E^{(5)}_{3,1}+E^{(5)}_{4,2} +E^{(5)}_{3,4}+E^{(5)}_{1,5} $\end{footnotesize} \\
\hline

\begin{footnotesize}$U^{(2)}_{1,2}$\end{footnotesize}& 
\begin{footnotesize}   $0 1  1 1 $  \end{footnotesize} &
\begin{footnotesize}$E^{(2)}_{1,2}$ \end{footnotesize} &
\begin{footnotesize}$U^{(2)}_{2,5}$\end{footnotesize}& 
\begin{footnotesize}   $0 1  1 2 1 2 $   \end{footnotesize}  & 
\begin{footnotesize}$E^{(5)}_{3,2}+E^{(5)}_{1,4} +E^{(5)}_{2,5}+E^{(5)}_{4,5} $\end{footnotesize} \\
\hline

\begin{footnotesize}$U_{2,2}$\end{footnotesize}& 
\begin{footnotesize}   $1 1  1 1 $ \end{footnotesize}  & 
\begin{footnotesize}$E^{(2)}_{2,1} $\end{footnotesize}&
\begin{footnotesize}$U^{(1)}_{3,5}$\end{footnotesize}& 
\begin{footnotesize}   $1 2  1 2 1 1 $  \end{footnotesize} & 
\begin{footnotesize}$E^{(5)}_{4,2}+E^{(5)}_{2,3}+E^{(5)}_{5,3}+E^{(5)}_{1,5}$  \end{footnotesize} \\
\hline

\begin{footnotesize}$U_{0,3}$\end{footnotesize}
& \begin{footnotesize}   $0 1  0 1 0 1 $  \end{footnotesize} &
\begin{footnotesize} $E^{(3)}_{2,1}+E^{(3)}_{3,2}$ \end{footnotesize}&
 \begin{footnotesize}$U^{(2)}_{3,5}$\end{footnotesize}& 
\begin{footnotesize}   $1 1  1 2 1 2 $   \end{footnotesize} & 
\begin{footnotesize}$E^{(5)}_{1,2}+E^{(5)}_{4,3} +E^{(5)}_{1,4}+E^{(5)}_{2,5}$ \end{footnotesize} \\
\hline

\begin{footnotesize}$U^{(1)}_{1,3}$\end{footnotesize}& 
\begin{footnotesize}   $1 1  0 1 0 1 $   \end{footnotesize} &
\begin{footnotesize}$E^{(3)}_{2,1}+E^{(3)}_{3,2}$ \end{footnotesize}&
\begin{footnotesize}$U_{2,6}$\end{footnotesize} & 
\begin{footnotesize}   $0 1  1 2 1 2 0 1  $  \end{footnotesize} & 
\begin{footnotesize} $E^{(6)}_{1,3}+E^{(6)}_{2,1}+E^{(6)}_{2,4}-E^{(6)}_{4,3} +E^{(6)}_{5,1}+E^{(6)}_{6,2}$\end{footnotesize}
\\
\hline

\begin{footnotesize}$U^{(1)}_{2,3}$\end{footnotesize}& 
\begin{footnotesize}   $1 1 1 1 0 1 $  \end{footnotesize} &
\begin{footnotesize}$E^{(3)}_{2,1}+E^{(3)}_{3,2}$  \end{footnotesize} &
 \begin{footnotesize}$U_{3,6} $\end{footnotesize}& 
\begin{footnotesize}  $ 1 2  1 2 1 2 $ \end{footnotesize} & 
\begin{footnotesize} $E^{(6)}_{1,2}+E^{(6)}_{4,2} +E^{(6)}_{5,3}+E^{(6)}_{4,5} +E^{(6)}_{2,6}-E^{(6)}_{5,6}$\end{footnotesize}
\\
\hline

\begin{footnotesize}$U^{(2)}_{1,3}$\end{footnotesize} & 
\begin{footnotesize}   $0 1 1 1 0 1$  \end{footnotesize} & 
\begin{footnotesize}$E^{(3)}_{3,1}+E^{(3)}_{1,2}$\end{footnotesize}&
\begin{footnotesize}$U^{(1)}_{3,6}$\end{footnotesize}& 
\begin{footnotesize}   $1 1  1 2 1 2 0 1$          \end{footnotesize} & 
\begin{footnotesize}$E^{(6)}_{2,1}+E^{(6)}_{3,2} +E^{(6)}_{3,4}-E^{(6)}_{4,1} +E^{(6)}_{5,2}+E^{(6)}_{6,3}$\end{footnotesize}
\\
\hline

\begin{footnotesize}$U^{(2)}_{2,3}$\end{footnotesize} & 
\begin{footnotesize}   $1 1  0 1 1 1$  \end{footnotesize} & 
\begin{footnotesize}$E^{(3)}_{3,1}+E^{(3)}_{2,3}$  \end{footnotesize}&
\begin{footnotesize}$U^{(2)}_{3,6} $\end{footnotesize} & 
\begin{footnotesize}   $0 1  1 2 1 2 1 1 $  \end{footnotesize} & 
\begin{footnotesize}$E^{(6)}_{2,5}+E^{(6)}_{3,2} +E^{(6)}_{3,6}+E^{(6)}_{5,4}+E^{(6)}_{6,1}-E^{(6)}_{6,5}$\end{footnotesize}
\\
\hline

\begin{footnotesize}$U_{3,3}$\end{footnotesize}& 
\begin{footnotesize}   $1 1 1 1 1 1 $ \end{footnotesize} &
\begin{footnotesize}$E^{(3)}_{2,1}+E^{(3)}_{3,2}$  \end{footnotesize}&
\begin{footnotesize}$U_{4,6}$\end{footnotesize}& 
\begin{footnotesize}   $1 1  1 2 1 2 1 1 $   \end{footnotesize} & 
\begin{footnotesize} $E^{(6)}_{2,1}+E^{(6)}_{3,2} +E^{(6)}_{3,5}+E^{(6)}_{4,3}-E^{(6)}_{5,1}+E^{(6)}_{6,2}$\end{footnotesize}
\\
\hline

\begin{footnotesize}$U^{(3)}_{1,3}$\end{footnotesize} &
\begin{footnotesize}    $0 1  0 1 1 1  $ \end{footnotesize} & 
\begin{footnotesize}$E^{(3)}_{1,2}+E^{(3)}_{2,3}$\end{footnotesize} & 
\begin{footnotesize}$U_{3,7} $ \end{footnotesize}& 
\begin{footnotesize}   $1 2  1 3 1 2 $  \end{footnotesize} &
\begin{footnotesize} $E^{(7)}_{1,2}+E^{(7)}_{4,2} -E^{(7)}_{6,3}-E^{(7)}_{1,5} -E^{(7)}_{4,6}-E^{(7)}_{5,7}$\end{footnotesize}
\\
\hline

\begin{footnotesize}$U^{(3)}_{2,3}$\end{footnotesize}&
\begin{footnotesize}   $0 1 1 1 1 1  $  \end{footnotesize} & 
\begin{footnotesize}$E^{(3)}_{1,2}+E^{(3)}_{2,3}$  \end{footnotesize}&
\begin{footnotesize}$U_{4,7} $  \end{footnotesize}& 
\begin{footnotesize}   $1 2  2 3 1 2 $ \end{footnotesize} &
\begin{footnotesize} $E^{(7)}_{5,2}+E^{(7)}_{1,3} +E^{(7)}_{2,4}+E^{(7)}_{6,4}+E^{(7)}_{3,7}+E^{(7)}_{6,7} $ \end{footnotesize} \\
\hline

\end{tabular}

\end{proposition}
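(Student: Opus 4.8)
The plan is to reduce to a compact matrix problem and then resolve it by covering theory. A representation of $\Q_2$ bound by $I_3$ is a triple $(M_1\xrightarrow{f}M_2,N)$ with $N$ a nilpotent endomorphism of $M_2$ satisfying $N^3=0$. If $\ker f\neq 0$, choose a complement $M_1=\ker f\oplus M_1'$; then $M\cong(\ker f\to 0)\oplus(M_1'\hookrightarrow M_2)$, and the first summand is a direct sum of copies of the simple $U_{1,0}$. So an indecomposable $M\ncong U_{1,0}$ either has $M_1=0$, in which case it is one of the three indecomposable $K[\alpha]/(\alpha^3)$-modules $U_{0,1},U_{0,2},U_{0,3}$, or has $f$ injective; in the latter case I identify $M_1$ with $\im f$, so that $M$ becomes a subspace $M_1\subseteq M_2$ of a $K[\alpha]/(\alpha^3)$-module. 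What is left is to classify the indecomposable such pairs $(M_1\subseteq M_2,N)$ up to base change and to exhibit each $N$ in a Jordan-type basis adapted to the natural inclusion $K^i\hookrightarrow K^j$.

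For this I would pass to the universal cover. Putting $\deg\alpha=1$ and $\deg\alpha_1=0$ makes $I_3$ homogeneous and realizes $\A$ as an orbit algebra $\widetilde{\mathcal A}/\mathbb Z$ of a Galois covering $\widetilde{\mathcal A}=K\widetilde{\mathcal Q}/\widetilde I\to\A$ with group $\mathbb Z$, where $\widetilde{\mathcal Q}$ is the ``infinite comb'' $\cdots\to v_{-1}\to v_0\to v_1\to\cdots$ carrying a pendant arrow $w_n\to v_n$ at each spine vertex, and $\widetilde I$ is generated by the lifts $\alpha_{n+2}\alpha_{n+1}\alpha_n$ of $\alpha^3$. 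I would then show $\widetilde{\mathcal A}$ (which is locally bounded) is locally representation-finite: every indecomposable $\widetilde{\mathcal A}$-module has connected support meeting at most four consecutive indices $a,a+1,a+2,a+3$, and the finitely many ``window algebras'' obtained by restriction are representation-finite — the relation $\alpha^3=0$ is exactly what makes the knitting of their Auslander--Reiten quivers terminate. Gabriel's covering theorem then gives that the push-down $F_\lambda\colon\rep\widetilde{\mathcal A}\to\rep\A$ sends indecomposables to indecomposables, is surjective on isomorphism classes of indecomposables, and identifies those lying in one $\mathbb Z$-orbit (the action on indecomposables being free since their supports are finite). Thus the indecomposable $\A$-modules are precisely the push-downs of a system of $\mathbb Z$-orbit representatives of indecomposable $\widetilde{\mathcal A}$-modules; computing these window by window (by knitting), applying $F_\lambda$, and rewriting the induced loop in Jordan form produces the table. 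This is also the source of the dimension-vector notation promised in the statement: the displayed sequence records $(\dim M_{w_a},\dim M_{v_a},\dim M_{w_{a+1}},\dim M_{v_{a+1}},\ldots)$ along the window of the covering module $\widetilde M$ with $F_\lambda\widetilde M\cong U$.

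It then remains to confirm completeness and irredundancy. Completeness means that the knitting closes up — equivalently, that the Auslander--Reiten quiver of $\widetilde{\mathcal A}$ is periodic under the shift with the listed modules forming a fundamental domain — so that no further indecomposable arises; this simultaneously reproves representation-finiteness of $\A$. Irredundancy is routine: modules of different dimension vector cannot be isomorphic, and the ones sharing a dimension vector (for instance the three modules of $\A$-dimension $(1,3)$, or the families $U^{(k)}_{i,j}$) are distinguished by the relative position of $M_1$ inside the Jordan decomposition of $(M_2,N)$, equivalently by the Hom-dimensions $[U_{i,j},-]$ and $[-,U_{i,j}]$. The main obstacle is precisely the local-representation-finiteness input for $\widetilde{\mathcal A}$ — the four-index support bound and the finiteness of the window algebras (i.e.\ that the knitting both terminates and closes up) — after which the rest is the lengthy but mechanical bookkeeping of pushing down and normalizing each $N$.
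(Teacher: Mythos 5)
Your proposal follows essentially the same route as the paper: pass to the $\mathbb{Z}$-covering of $K\Q_2/I_3$ (the same comb-shaped quiver with relations $\alpha_{i+1}\alpha_i\alpha_{i-1}$), knit the Auslander--Reiten quivers of finite windows until the list stabilizes under the shift, and push down to obtain the table --- which is exactly what the paper does with the windows $\Q(4)$ and $\Q(5)$. Your preliminary splitting-off of $U_{1,0}$ and the $M_1=0$ summands is a harmless extra, and the four-row support bound you flag as the key input is precisely what the paper verifies by checking that $\Gamma(\Q(5),I(5))$ contributes nothing new beyond $\Gamma(\Q(4),I(4))$ up to the $\mathbb{Z}$-action.
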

\begin{proof}
In order to calculate the representatives of the isomorphism classes of indecomposable representations,  
we make use of covering theory and calculate the Auslander-Reiten quiver of
\begin{center}{\small\begin{tikzpicture}
\matrix (m) [matrix of math nodes, row sep=1em,
column sep=3em, text height=1.5ex, text depth=0.2ex]
{ & \vdots & \vdots \\
 & \bullet  & \bullet \\ 
\widehat{\Q}\colon & \bullet  & \bullet \\
 & \bullet  & \bullet \\
 & \vdots &\vdots  \\};
\path[->]
(m-2-2) edge  (m-2-3)
(m-3-2) edge   (m-3-3) 
(m-4-2) edge  (m-4-3)
(m-1-3) edge node[right=0.05cm] {$\alpha_{i-1}$} (m-2-3) 
(m-2-3) edge node[right=0.05cm] {$\alpha_i$} (m-3-3)
(m-3-3) edge node[right=0.05cm] {$\alpha_{i+1}$} (m-4-3)
(m-4-3) edge node[right=0.05cm] {$\alpha_{i+2}$} (m-5-3);\end{tikzpicture}}\end{center}
together with the induced ideal $\widehat{I}$, generated by all paths $\alpha_{i+1}\alpha_i\alpha_{i-1}$. The natural free action of the group $\mathbf{Z}$ on $\widehat{\Q}$ is given by shifting the rows. Due to covering theory (see \cite{BoGa} and \cite{Ga3}), there is a bijection between the indecomposables in $\A$ and the indecomposables in $\hat{\A}/\mathbf{Z}$. For every integer $k$, we consider the finite subquiver
\begin{center}{\small\begin{tikzpicture}
\matrix (m) [matrix of math nodes, row sep=1em,
column sep=3em, text height=1.5ex, text depth=0.2ex]
{ & \bullet_1  & \bullet_2 \\ 
 & \bullet_3  & \bullet_4 \\
\Q(k)\colon & \vdots  & \vdots \\
 & \bullet_{2k-3}  & \bullet_{2k-2} \\
 & \bullet_{2k-1}  & \bullet_{2k} \\};
\path[->]
(m-1-2) edge  (m-1-3)
(m-2-2) edge   (m-2-3)
(m-5-2) edge  (m-5-3)
(m-4-2) edge  (m-4-3)
(m-1-3) edge node[right=0.05cm] {$\alpha_{1}$} (m-2-3)
(m-4-3) edge node[right=0.05cm] {$\alpha_{k-1}$} (m-5-3);\end{tikzpicture}}\end{center}
together with the ideal $I(k)$ generated by the paths $\alpha_{i+1}\alpha_i\alpha_{i-1}$ for $i\in\{2,\punkte,k-2\}$.
 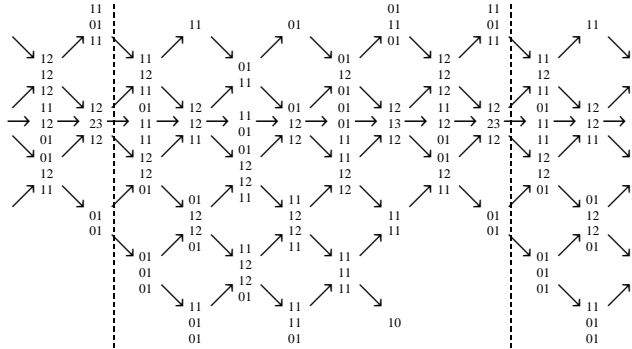
\begin{figure}[b]
\setlength{\unitlength}{0.55mm}
\hspace{1.5cm}\begin{picture}(30,20)(-8,6)
\multiput(21,-65)(0,2){42}{\line(0,1){1}}
\multiput(117,-65)(0,2){42}{\line(0,1){1}}
 \put(-4,6){$\searrow$}
\put(-4,-6){$\nearrow$}\put(-4,-18){$\searrow$}\put(-5,-12){$\rightarrow$}
\put(-4,-30){$\nearrow$}
  \put(0,0){\begin{tiny}$\begin{array}{l}
1 2 \\ 
1 2 \\ 
1 2  \end{array}$\end{tiny}}\put(8,6){$\nearrow$}\put(8,-6){$\searrow$}
  \put(0,-12){\begin{tiny}$\begin{array}{l}
1 1 \\ 
1 2 \\ 
0 1  \end{array}$\end{tiny}}\put(7,-12){$\rightarrow$}
  \put(0,-24){\begin{tiny}$\begin{array}{l}
0 1 \\ 
1 2 \\ 
1 1  \end{array}$\end{tiny}}\put(8,-18){$\nearrow$}\put(8,-30){$\searrow$}
  \put(12,12){\begin{tiny}$\begin{array}{l}
1 1 \\ 
0 1 \\ 
1 1  \end{array}$\end{tiny}}
  \put(12,-12){\begin{tiny}$\begin{array}{l}
1 2 \\ 
2 3 \\ 
1 2  \end{array}$\end{tiny}}
  \put(12,-36){\begin{tiny}$\begin{array}{l}
0 1 \\ 
0 1  \end{array}$\end{tiny}}
  \put(20,6){$\searrow$}

 \put(20,-6){$\nearrow$}\put(20,-18){$\searrow$}\put(19,-12){$\rightarrow$}

  \put(20,-30){$\nearrow$}\put(20,-42){$\searrow$}
    \put(24,0){\begin{tiny}$\begin{array}{l}
1 1 \\ 
1 2 \\ 
1 1  \end{array}$\end{tiny}}\put(32,6){$\nearrow$}\put(32,-6){$\searrow$}
  \put(24,-12){\begin{tiny}$\begin{array}{l}
0 1 \\ 
1 1 \\ 
1 1  \end{array}$\end{tiny}}\put(31,-12){$\rightarrow$}
  \put(24,-24){\begin{tiny}$\begin{array}{l}
1 2 \\ 
1 2 \\ 
0 1  \end{array}$\end{tiny}}\put(32,-18){$\nearrow$}\put(32,-30){$\searrow$}
  \put(24,-48){\begin{tiny}$\begin{array}{l}
0 1 \\ 
0 1 \\ 
0 1  \end{array}$\end{tiny}}\put(32,-42){$\nearrow$}\put(32,-54){$\searrow$}
   \put(36,12){\begin{tiny}$\begin{array}{l}
1 1   \end{array}$\end{tiny}}\put(44,6){$\searrow$}

  \put(36,-12){\begin{tiny}$\begin{array}{l}
1 2 \\ 
1 2 \\ 
1 1  \end{array}$\end{tiny}}\put(44,-6){$\nearrow$}\put(44,-18){$\searrow$}\put(43,-12){$\rightarrow$}

  \put(36,-36){\begin{tiny}$\begin{array}{l}
0 1 \\ 
1 2 \\ 
1 2 \\
0 1 \end{array}$\end{tiny}}\put(44,-30){$\nearrow$}\put(44,-42){$\searrow$}
  \put(36,-60){\begin{tiny}$\begin{array}{l}
1 1 \\ 
0 1 \\ 
0 1  \end{array}$\end{tiny}}\put(44,-54){$\nearrow$}
  \put(48,0){\begin{tiny}$\begin{array}{l}
0 1 \\ 
1 1   \end{array}$\end{tiny}}\put(56,6){$\nearrow$}\put(56,-6){$\searrow$}
  \put(48,-12){\begin{tiny}$\begin{array}{l}
1 1 \\ 
0 1   \end{array}$\end{tiny}}\put(55,-12){$\rightarrow$}
  \put(48,-24){\begin{tiny}$\begin{array}{l}
0 1 \\ 
1 2 \\ 
1 2 \\
1 1 \end{array}$\end{tiny}}\put(56,-18){$\nearrow$}\put(56,-30){$\searrow$}

  \put(48,-48){\begin{tiny}$\begin{array}{l}
1 1 \\ 
1 2 \\ 
1 2 \\
0 1 \end{array}$\end{tiny}}\put(56,-42){$\nearrow$}\put(56,-54){$\searrow$}
  \put(60,12){\begin{tiny}$\begin{array}{l}
0 1  \end{array}$\end{tiny}}\put(68,6){$\searrow$}

  \put(60,-12){\begin{tiny}$\begin{array}{l}
0 1 \\ 
1 2 \\ 
1 2  \end{array}$\end{tiny}}\put(68,-6){$\nearrow$}\put(68,-18){$\searrow$}\put(67,-12){$\rightarrow$}

\put(60,-36){\begin{tiny}$\begin{array}{l}
1 1 \\ 
1 2 \\ 
1 2 \\
1 1 \end{array}$\end{tiny}}\put(68,-30){$\nearrow$}\put(68,-42){$\searrow$}
  \put(60,-60){\begin{tiny}$\begin{array}{l}
1 1 \\ 
1 1 \\ 
0 1  \end{array}$\end{tiny}}\put(68,-54){$\nearrow$}
  \put(72,0){\begin{tiny}$\begin{array}{l}
0 1 \\ 
1 2 \\ 
0 1  \end{array}$\end{tiny}}\put(80,6){$\nearrow$}\put(80,-6){$\searrow$}
  \put(72,-12){\begin{tiny}$\begin{array}{l}
0 1 \\ 
0 1 \\ 
1 1  \end{array}$\end{tiny}}\put(79,-12){$\rightarrow$}
  \put(72,-24){\begin{tiny}$\begin{array}{l}
1 1 \\ 
1 2 \\ 
1 2  \end{array}$\end{tiny}}\put(80,-18){$\nearrow$}\put(80,-30){$\searrow$}

  \put(72,-48){\begin{tiny}$\begin{array}{l}
1 1 \\ 
1 1 \\ 
1 1  \end{array}$\end{tiny}}\put(80,-42){$\nearrow$}\put(80,-54){$\searrow$}
   \put(84,12){\begin{tiny}$\begin{array}{l}
0 1 \\ 
1 1 \\ 
0 1  \end{array}$\end{tiny}}\put(92,6){$\searrow$}

  \put(84,-12){\begin{tiny}$\begin{array}{l}
1 2 \\ 
1 3 \\ 
1 2  \end{array}$\end{tiny}}\put(92,-6){$\nearrow$}\put(92,-18){$\searrow$}\put(91,-12){$\rightarrow$}

\put(84,-36){\begin{tiny}$\begin{array}{l}
1 1 \\ 
1 1  \end{array}$\end{tiny}}\put(92,-30){$\nearrow$}
  \put(84,-60){\begin{tiny}$\begin{array}{l}
1 0   \end{array}$\end{tiny}}

  \put(96,0){\begin{tiny}$\begin{array}{l}
1 2 \\ 
1 2 \\ 
1 2  \end{array}$\end{tiny}}\put(104,6){$\nearrow$}\put(104,-6){$\searrow$}
  \put(96,-12){\begin{tiny}$\begin{array}{l}
1 1 \\ 
1 2 \\ 
0 1  \end{array}$\end{tiny}}\put(103,-12){$\rightarrow$}
  \put(96,-24){\begin{tiny}$\begin{array}{l}
0 1 \\ 
1 2 \\ 
1 1  \end{array}$\end{tiny}}\put(104,-18){$\nearrow$}\put(104,-30){$\searrow$}

   \put(108,12){\begin{tiny}$\begin{array}{l}
1 1 \\ 
0 1 \\ 
1 1  \end{array}$\end{tiny}}\put(116,6){$\searrow$}

  \put(108,-12){\begin{tiny}$\begin{array}{l}
1 2 \\ 
2 3 \\
1 2  \end{array}$\end{tiny}}\put(116,-6){$\nearrow$}\put(116,-18){$\searrow$}\put(115,-12){$\rightarrow$}

\put(108,-36){\begin{tiny}$\begin{array}{l}
0 1 \\ 
0 1  \end{array}$\end{tiny}}\put(116,-30){$\nearrow$}\put(116,-42){$\searrow$}
  
  \put(120,0){\begin{tiny}$\begin{array}{l}
1 1 \\ 
1 2 \\ 
1 1  \end{array}$\end{tiny}}\put(128,6){$\nearrow$}\put(128,-6){$\searrow$}
  \put(120,-12){\begin{tiny}$\begin{array}{l}
0 1 \\ 
1 1 \\ 
1 1  \end{array}$\end{tiny}}\put(127,-12){$\rightarrow$}
  \put(120,-24){\begin{tiny}$\begin{array}{l}
1 2 \\ 
1 2 \\ 
0 1  \end{array}$\end{tiny}}\put(128,-18){$\nearrow$}\put(128,-30){$\searrow$}

  \put(120,-48){\begin{tiny}$\begin{array}{l}
0 1 \\ 
0 1 \\ 
0 1  \end{array}$\end{tiny}}\put(128,-42){$\nearrow$}\put(128,-54){$\searrow$}

  \put(132,12){\begin{tiny}$\begin{array}{l}
1 1  \end{array}$\end{tiny}}\put(140,6){$\searrow$}

  \put(132,-12){\begin{tiny}$\begin{array}{l}
1 2 \\ 
1 2 \\ 
1 1  \end{array}$\end{tiny}}\put(140,-6){$\nearrow$}\put(140,-18){$\searrow$}\put(139,-12){$\rightarrow$}

  \put(132,-36){\begin{tiny}$\begin{array}{l}
0 1 \\ 
1 2 \\ 
1 2 \\
0 1  \end{array}$\end{tiny}}\put(140,-30){$\nearrow$}\put(140,-42){$\searrow$}
  \put(132,-60){\begin{tiny}$\begin{array}{l}
1 1 \\ 
0 1 \\ 
0 1  \end{array}$\end{tiny}}\put(140,-54){$\nearrow$}
\end{picture}\rule{3mm}{0mm}\vspace{3.8cm}
\caption{The Auslander-Reiten quiver $\Gamma(\Q,I)$}\label{fig1}
\end{figure}

By calculating the Auslander-Reiten quivers $\Gamma(\Q(4),I(4))$ and $\Gamma(\Q(5),I(5))$ with elementary methods (see \cite[IV.4]{ASS}), we realize that all isomorphism classes of indecomposables in $K\Q(5)/I(5)$ already appear (up to the action of $\textbf{Z}$) in the  quiver $\Gamma(\Q(4),I(4))$. The translation of the indecomposables between the algebras is deduced directly from the action of $\textbf{Z}$.\\[1ex]
 It, therefore, suffices to calculate the indecomposable representations of the quiver $\Q(4)$
with the associated ideal $I(4)$ generated by the path $\alpha_{3}\alpha_{2}\alpha_1$. Figure \ref{fig1} shows $\Gamma(\Q,I)$, the dotted lines mark the mentioned identifications. We denote the indecomposables by their dimension vectors and directly delete zero rows in these, such that the identifications by the action of $\textbf{Z}$ can be seen right away.

The representations given in the table are all indecomposable, which can, for example, be proved by showing that the corresponding endomorphism rings are local. Either the representations have been considered in the $2$-nilpotent case or the number of given representations coincides with the number of corresponding indecomposables with the same dimension vectors in the Auslander Reiten quiver $\Gamma(\Q,I)$.
\end{proof}
Following Lemma \ref{bijection}, the $P$-orbits of $3$-nilpotent matrices are in bijection to the isomorphism classes of representations in $\rep^{\inj}_K(\Q_2,I_3)$ of dimension vector $\underline{d}$. The orbits are represented by direct sums of the indecomposable representations of Proposition \ref{indec3nilp} and by their translations to matrices, respectively.\\[1ex]
As in Section \ref{closures}, the orbit closures can be calculated by considering the dimensions of certain homomorphism spaces. The concrete dimension table is shown in Figure \ref{fig2}.

\subsubsection{The open orbit} 
We denote the matrix in normal form in the open $P$-orbit in $\N_n^{(3)}$ by $N_{\textrm{open}}$ and the representation in normal form  in the open $\GL_{\underline{d}}$-orbit in $R_{\underline{d}}^{\inj}(\Q_2,I_3)$ by $M_{\textrm{open}}$.
\begin{proposition}
The open orbit is represented by
\begin{enumerate} 
 \item[1.1] If $b_1\leq b_2$, such that $b_1\leq r$, then
 \[M_{\textrm{open}}=(\U_{1,3}^{(1)})^{b_1}\oplus (\U_{0,3})^{r-b_1} \oplus \left\lbrace 
\begin{array}{ll}
0, &\textrm{if}~n=3r; \\ 
\U_{0,1}, & \textrm{if}~n=3r +1;  \\ 
\U_{0,2}, &  \textrm{if}~n=3r +2;
                                                                     \end{array}
  \right.\]
\item[1.2] If  $b_1\leq b_2$, such that $b_1>r$, then
 \[M_{\textrm{open}}=(\U_{3,6}^{(1)})^{b_1-r-1}\oplus(\U_{1,3}^{(1)})^{n-2b_1} \oplus \left\lbrace 
\begin{array}{ll}
\U^{(1)}_{3,6}, & \textrm{if}~n=3r; \\ 
\U^{(1)}_{2,4}, & \textrm{if}~n=3r +1;  \\ 
\U^{(1)}_{1,2}, &  \textrm{if}~n=3r +2;
                                                                     \end{array}
  \right.\]
\item[2.1] If  $b_1\geq b_2$, such that $b_2\leq r$, then
\[M_{\textrm{open}}=(\U_{2,3}^{(1)})^{b_2}\oplus (\U_{3,3})^{r-b_2} \oplus \left\lbrace 
\begin{array}{ll}
0, & \textrm{if}~n=3r; \\ 
\U_{1,1}, & \textrm{if}~n=3r +1;  \\ 
\U_{2,2}, &  \textrm{if}~n=3r +2;
                                                                     \end{array}
  \right.\]
\item[2.2] If $b_1\geq b_2$, such that $b_2>r$, then
\[M_{\textrm{open}}=(\U_{3,6}^{(1)})^{b_2-r-1}\oplus(\U_{2,3}^{(1)})^{n-2b_2} \oplus \left\lbrace 
\begin{array}{ll}
\U^{(1)}_{3,6}, & \textrm{if}~n=3r; \\ 
\U^{(1)}_{2,4}, & \textrm{if}~n=3r +1;  \\ 
\U^{(1)}_{1,2}, &  \textrm{if}~n=3r +2;
                                                                     \end{array}
  \right.\]

\end{enumerate}
\end{proposition}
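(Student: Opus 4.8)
The plan is to reduce the claim to a degeneration-minimality argument analogous to the $2$-nilpotent case. First I would fix the block sizes $\df=(b_1,b_2)$ and the dimension vector $\underline{d}_P=(b_1,n)$, and record which indecomposables from Proposition \ref{indec3nilp} lie in $\rep^{\inj}_K(\Q_2,I_3)(\underline{d}_P)$, i.e.\ those whose first-component map is injective; the point here is that only the ``injective'' indecomposables ($\U_{1,j}^{(\cdot)}$, $\U_{2,j}^{(\cdot)}$, $\U_{3,j}^{(\cdot)}$, $\U_{1,1}$, $\U_{2,2}$, $\U_{3,3}$, $\U_{0,k}$, and the higher ones $\U_{3,6}^{(1)}$, $\U_{2,4}^{(1)}$, etc.) can appear, and each carries a well-defined pair of ``ranks'' $(\dim M_1,\dim M_2)$ which sum over a direct-sum decomposition. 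So the first step is to check that the four candidate representations $M_{\textrm{open}}$ actually have dimension vector $\underline{d}_P=(b_1,n)$: in each of the four cases one simply adds up the first and second coordinates of the dimension vectors listed in Proposition \ref{indec3nilp} and verifies they give $b_1$ and $n$ respectively. This is the routine bookkeeping part and splits cleanly into the subcases $b_1\le b_2$ versus $b_1\ge b_2$ and $n\equiv 0,1,2\pmod 3$.

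Next I would establish that each $M_{\textrm{open}}$ is a \emph{generic} point, i.e.\ that $\Orb_{M_{\textrm{open}}}$ is dense in $R^{\inj}_{\underline{d}_P}(\Q_2,I_3)$. Since the algebra is representation-finite, there is a unique open orbit, so it suffices to show that $M_{\textrm{open}}$ admits no proper degeneration within $\rep^{\inj}_K(\Q_2,I_3)(\underline{d}_P)$, equivalently (by the representation-finite analogue of Theorem \ref{pq}, using that degenerations are detected by the $\Hom$-dimensions read off Figure \ref{fig2}) that $\dim\Orb_{M_{\textrm{open}}}\ge\dim\Orb_M$ for every $M$ of the same dimension vector, with equality only for $M\cong M_{\textrm{open}}$. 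Concretely I would mimic the proof of the open-orbit proposition in Section \ref{dimsubpar}: argue that any other $M$ with $\underline{\dim}M=\underline{d}_P$ is obtained from $M_{\textrm{open}}$ by a chain of ``decreasing'' minimal degenerations, so $\dim\Orb_{M_{\textrm{open}}}-\dim\Orb_M\ge 1$. For this one uses the dimension formula $\dim\Orb_M=\dim\GL_{\underline{d}_P}-[M,M]$ together with the $\Hom$-table of Figure \ref{fig2}; the key inequality to verify is that $[M_{\textrm{open}},M_{\textrm{open}}]$ is strictly minimal among all $[M,M]$ with $\underline{\dim}M=\underline{d}_P$.

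The heart of the argument, and the step I expect to be the main obstacle, is showing directly that $[M_{\textrm{open}},M_{\textrm{open}}]$ attains this minimum — equivalently, identifying which multiset of indecomposables of total dimension $(b_1,n)$ minimizes the self-$\Hom$. The intuition is that the ``most generic'' choice stacks as many of the largest injective indecomposables ($\U_{3,6}^{(1)}$, of dimension vector $(1,2,1,2,1,2,0,1)$, i.e.\ rank pair $(b_1$-contribution $3$, $n$-contribution $6)$ split appropriately) as the constraint $b_1\le n-b_1$ (resp.\ $b_2\le n-b_2$) allows, then fills the remainder with $\U_{1,3}^{(1)}$ or $\U_{2,3}^{(1)}$ (rank pair $(2,3)$ or similar), and caps off with a small indecomposable $\U_{0,1},\U_{0,2},\U_{1,1},\U_{2,2},\U_{2,4}^{(1)},\U_{1,2}^{(1)}$ according to $n\bmod 3$. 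Making this rigorous requires a finite but somewhat delicate combinatorial optimization over partitions of $(b_1,n)$ into the available rank pairs, using the explicit $\Hom$-dimensions from Figure \ref{fig2}; in practice I would organize it as: (i) show no decomposition can use an indecomposable larger than $\U_{3,6}^{(1)}$ without forcing extra $\Hom$'s, (ii) show the number of $\U_{3,6}^{(1)}$-summands is maximized in $M_{\textrm{open}}$, and (iii) check the finitely many ways of completing the dimension vector, eliminating all but the stated one by a direct comparison of self-$\Hom$'s. The boundary between cases 1.1/1.2 (resp.\ 2.1/2.2) is exactly the threshold $b_1>r$ (resp.\ $b_2>r$) at which a $\U_{3,6}^{(1)}$-summand first becomes available, which is why the answer changes form there, and I would present the verification of that threshold explicitly as the crux.
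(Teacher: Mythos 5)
Your reduction of ``openness'' to minimality of $[M,M]$ is sound in principle: $R^{\inj}_{\underline{d}}(\Q_2,I_3)$ is irreducible with finitely many orbits, so the dense orbit is the unique one of maximal dimension, and $\dim\Orb_M=\dim\GL_{\underline{d}}-[M,M]$, so it suffices to show the candidate strictly minimizes the self-$\Hom$ among all decompositions of $\underline{d}=(b_1,n)$ into the indecomposables of Proposition \ref{indec3nilp}. But that minimization --- which you yourself identify as the crux --- is exactly the step you do not carry out; it is a large case analysis over all ways of partitioning $(b_1,n)$ into the available dimension vectors, with $\Hom$-comparisons from Figure \ref{fig2} for each competitor. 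As written, the proposal is a plan whose decisive step is missing, and your guiding heuristic (``stack as many $\U^{(1)}_{3,6}$ as possible'') is not obviously self-justifying, since in cases 1.1 and 2.1 the optimum uses none of them.

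The paper's proof sidesteps the optimization entirely and needs no comparison with any other representation. It computes $[M_{\textrm{open}},M_{\textrm{open}}]$ for the single stated candidate (for instance $3r^2-b_1b_2$ when $n=3r$) and verifies the one identity
\[\dim\N_n^{(3)}\;=\;\dim\Orb_{M_{\textrm{open}}}-nb_1\;=\;b_1^2+n^2-[M_{\textrm{open}},M_{\textrm{open}}]-nb_1,\]
where $\dim R^{\inj}_{\underline{d}}(\Q_2,I_3)=\dim\N_n^{(3)}+nb_1$ comes from the bundle isomorphism of Lemma \ref{bijection} (since $\dim\GL_{\underline{d}}-\dim P=nb_1$), and $\dim\N_n^{(3)}$ is known a priori from the Jordan type of the generic $3$-nilpotent matrix. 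An orbit whose dimension equals that of the ambient irreducible variety is dense, hence open. So to complete your argument you must either execute the full combinatorial minimization, or --- much more economically --- replace it by this single dimension count per case, which is what the paper does.
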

\begin{proof}
Let $n=3r$. In both cases 
$[M_{\textrm{open}},M_{\textrm{open}}] = 3r^2 -b_1b_2$. 
Since the open orbit is the orbit of maximal dimension, we have $\dim P.N_{\textrm{open}}=\dim\N^{(3)}$ and $\dim\Orb_{M_{\textrm{open}}}=\dim R_{\underline{d}}^{\inj}(\Q_2,I_3)$.
the claim follows from
 \[\dim\N^{(3)}= \dim P.N_{\textrm{open}}= \dim\Orb_{M_{\textrm{open}}} - n\cdot b_1 =b_1^2+n^2 -[M_{\textrm{open}},M_{\textrm{open}}]- n\cdot b_1.\]
The remaining cases can be shown analogously.
\end{proof}
The example of the action of the parabolic subgroup of block sizes $(2,2)$ can be found in Figure \ref{fig3}.
\begin{figure}[ht]\tiny\begin{center}\begin{tikzpicture}[xscale=0.67,yscale=0.58]
\node (M1) at (-1,0) {\fbox{$U^{(1)}_{2,4}~$ \vline\begin{tiny}$\begin{tabular}{c}
0 0 0 0 \\ 
0 0 0 0 \\ 
1 0 0 0 \\ 
0 1 1 0
        \end{tabular}$\end{tiny}}};
\node (M3) at (-4,-2) {\fbox{$U^{(1)}_{2,3}\oplus\U_{0,1}~$\vline\begin{tiny}$ \begin{tabular}{c}
0 0 0 0 \\ 
0 0 0 0 \\ 
0 1 0 0 \\ 
0 0 1 0
        \end{tabular}$\end{tiny}}};
\node (M8) at (2,-2) {\fbox{\begin{tiny}$\begin{tabular}{c}
0 0 0 0 \\ 
0 0 0 0 \\ 
0 1 0 0\\ 
0 0 1 0
        \end{tabular}$\end{tiny}\vline$~U^{(1)}_{1,3}\oplus\U_{1,1}$} };
\node (M9) at (-4,-4) { \fbox{$U^{(2)}_{1,3}\oplus\U_{1,1}~$\vline\begin{tiny}$ \begin{tabular}{c}
0 0 0 0 \\ 
0 0 1 0 \\ 
0 0 0 0 \\ 
0 1 0 0
        \end{tabular}$\end{tiny}} };
\node (M4) at (2,-4) {\fbox{\begin{tiny}$\begin{tabular}{c}
0 0 0 0 \\ 
0 0 0 0 \\ 
0 0 0 1 \\ 
0 1 0 0
        \end{tabular}$\end{tiny}\vline$~U^{(2)}_{2,3}\oplus\U_{0,1}$}};
\node (M2) at (-6,-6) { \fbox{$U^{(2)}_{2,4}~$\vline\begin{tiny}$ \begin{tabular}{c}
0 1 1 0 \\ 
0 0 0 0 \\ 
0 0 0 1 \\ 
0 0 0 0
        \end{tabular}$\end{tiny}}};
\node (M11) at (-1,-6) { \fbox{$U^{(1)}_{1,2}\oplus \U^{(2)}_{1,2}~$\vline\begin{tiny}$ \begin{tabular}{c}
0 0 0 0 \\ 
0 0 0 1 \\ 
1 0 0 0 \\ 
0 0 0 0
        \end{tabular}$\end{tiny}}};
\node (M5) at (-6.6,-8) {\fbox{$U^{(3)}_{2,3}\oplus\U_{0,1}~$\vline\begin{tiny}$ \begin{tabular}{c}
0 0 0 0 \\ 
0 0 1 0 \\ 
0 0 0 1 \\ 
0 0 0 0
        \end{tabular}$\end{tiny}}};
\node (M10) at (-2.9,-8) {\fbox{$U^{(3)}_{1,3}\oplus\U_{1,1}~$ \vline\begin{tiny}$ \begin{tabular}{c}
0 0 0 0 \\ 
0 0 1 0 \\ 
0 0 0 1 \\ 
0 0 0 0
        \end{tabular}$\end{tiny}}};
\node (M12) at (0.9,-8) {\fbox{\begin{tiny}$\begin{tabular}{c}
0 0 0 0 \\ 
0 0 0 0 \\ 
0 0 0 0 \\ 
0 1 0 0
        \end{tabular}$\end{tiny}\vline$~U^{(1)}_{1,2}\oplus \U_{1,1}\oplus \U_{0,1}$}};
\node (M6) at (4.6,-8) {\fbox{\begin{tiny}$\begin{tabular}{c}
0 0 0 0 \\ 
1 0 0 0 \\ 
0 0 0 0 \\ 
0 0 1 0
        \end{tabular}$\end{tiny}\vline$~U_{2,2}\oplus\U_{0,2}$}};
\node (M7) at (3,-10) {\fbox{\begin{tiny}$\begin{tabular}{c}
0 0 0 0 \\ 
0 0 0 0 \\ 
0 0 0 0 \\ 
0 0 1 0
        \end{tabular}$\end{tiny}\vline$~U_{2,2}\oplus\U^2_{0,1}$}};
\node (M13) at (-1,-12) {\fbox{$U^{(2)}_{1,2}\oplus \U_{1,1}\oplus \U_{0,1}~$\vline\begin{tiny}$ \begin{tabular}{c}
0 0 0 0 \\ 
0 0 0 1 \\ 
0 0 0 0 \\ 
0 0 0 0 
        \end{tabular}$\end{tiny}}};
\node (M14) at (-1,-14) {\fbox{$U^{2}_{1,1}\oplus \U^2_{0,1}~$\vline\begin{tiny}$ \begin{tabular}{c}
0 0 0 0 \\ 
0 0 0 0 \\
0 0 0 0 \\ 
0 0 0 0 
        \end{tabular}$\end{tiny}}};
\draw 
(M1) edge[-] (M3)
(M1) edge[-] (M8)
(M3) edge[-] (M9)
(M3) edge[-] (M4)
(M8) edge[-] (M9)
(M8) edge[-] (M4)
(M9) edge[-] (M2)
(M9) edge[-] (M10)
(M9) edge[-] (M11)
(M4) edge[-] (M6)
(M4) edge[-] (M12)
(M4) edge[-] (M11)
(M2) edge[-] (M5)
(M11) edge[-] (M12)
(M4) edge[-] (M11)
(M6) edge[-] (M7)
(M5) edge[-] (M13)
(M10) edge[-] (M13)
(M12) edge[-] (M13)
(M7) edge[-] (M13)
(M13) edge[-] (M14);
  \end{tikzpicture}\caption{Parabolic~ subgroup~ of~ block~ sizes~ $(2,2)$}\label{fig3}\end{center}\end{figure}
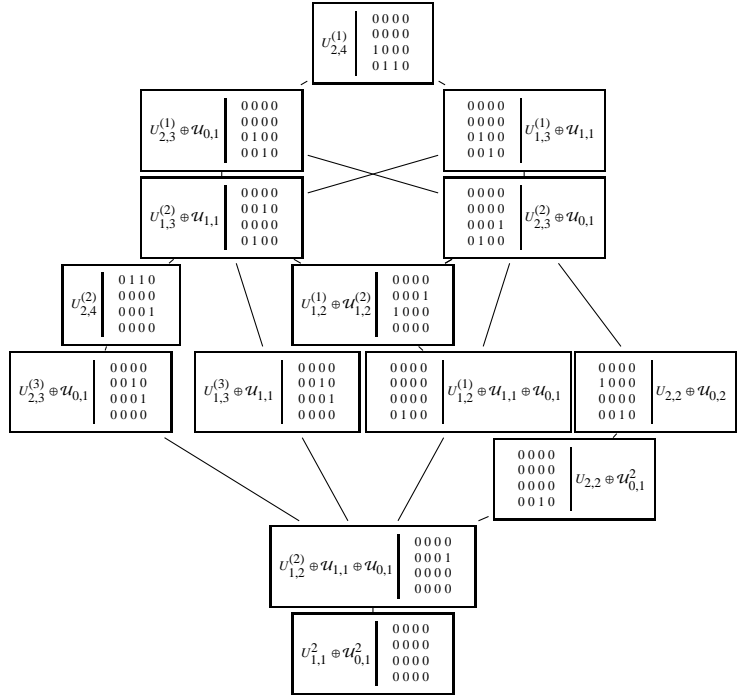

\begin{landscape}
\begin{figure}[ht]
\small
\begin{longtable}[c]{|p{0.3cm}|p{0.15cm}|p{0.15cm}|p{0.15cm}|p{0.15cm}|p{0.15cm}|p{0.2cm}|p{0.2cm}|p{0.2cm}|p{0.2cm}|p{0.2cm}|p{0.15cm}|p{0.15cm}|p{0.2cm}|p{0.2cm}|p{0.2cm}|p{0.2cm}|p{0.2cm}|p{0.2cm}|p{0.2cm}|p{0.15cm}|p{0.15cm}|p{0.15cm}|p{0.2cm}|p{0.2cm}|p{0.15cm}|p{0.2cm}|p{0.2cm}|p{0.15cm}|p{0.15cm}|p{0.15cm}|}
\hline
\begin{tiny}$\nearrow$\end{tiny} &\begin{tiny}$U_{0,1}$\end{tiny}& \begin{tiny}$U_{0,2}$\end{tiny} & \begin{tiny}$U_{0,3}$\end{tiny} & \begin{tiny}$U_{1,0}$\end{tiny} & \begin{tiny}$U_{1,1}$  \end{tiny}& \begin{tiny}$U^{(1)}_{1,2}$ \end{tiny} &\begin{tiny}$U^{(2)}_{1,2}$   \end{tiny}& \begin{tiny}$U^{(1)}_{1,3}$ \end{tiny} &\begin{tiny}$U^{(2)}_{1,3}$ \end{tiny}& \begin{tiny}$U^{(3)}_{1,3}$\end{tiny}  & \begin{tiny}$U_{1,4}$ \end{tiny}&\begin{tiny}$U_{2,2}$  \end{tiny} &\begin{tiny}$U^{(1)}_{2,3}$  \end{tiny}  &\begin{tiny}$U^{(2)}_{2,3}$  \end{tiny} &\begin{tiny}$U^{(3)}_{2,3}$   \end{tiny} & \begin{tiny}$U^{(1)}_{2,4}$\end{tiny} & \begin{tiny}$U^{(2)}_{2,4}$\end{tiny} & \begin{tiny}$U^{(1)}_{2,5}$\end{tiny} & \begin{tiny}$U^{(2)}_{2,5}$\end{tiny}& \begin{tiny}$U_{2,6}$\end{tiny}& \begin{tiny}$U_{3,3}$\end{tiny}& \begin{tiny}$U_{3,4}$ \end{tiny}& \begin{tiny}$U^{(1)}_{3,5}$\end{tiny} & \begin{tiny}$U^{(2)}_{3,5}$\end{tiny} & \begin{tiny}$U_{3,6}$\end{tiny}& \begin{tiny}$U^{(1)}_{3,6}$\end{tiny}& \begin{tiny}$U^{(2)}_{3,6}$\end{tiny} & \begin{tiny}$U_{3,7}$\end{tiny} & \begin{tiny}$U_{4,6}$\end{tiny} & \begin{tiny}$U_{4,7}$\end{tiny}\\ 
\hline
\begin{tiny}$U_{0,1}$\end{tiny} &$1$ & $1$ & $1$  & $0$ & $1$  & $1$  & $1$   & $1$  & $1$  & $1$  & $2$ & $1$   & $1$  & $1$  &   $1$  &  $2$ & $2$ & $2$ & $2$ & $2$ & $1$  & $2$ & $2$ & $2$ &   $2$  & $2$ & $2$ & $3$ &   $2$ &   $3$  \\ 
\hline
\begin{tiny}$U_{0,2}$ \end{tiny}&$1$ & $2$ & $2$  & $0$ & $1$  & $2$  & $2$   & $2$  & $2$  & $2$  & $3$ & $2$   & $2$   & $2$  &  $2$ &  $3$ &  $3$& $4$ & $4$ & $4$ & $2$ &$3$  & $4$ &$4$ &   $4$  & $4$ & $4$ & $5$ &   $4$&   $5$  \\ 
\hline
\begin{tiny}$U_{1,0}$ \end{tiny}&$0$ & $0$ & $0$  & $1$ & $0$  & $0$  & $0$   & $0$  & $0$  & $0$  & $0$ & $0$   & $0$   & $0$  &  $0$ &  $0$ &  $0$& $0$ & $0$ & $0$ & $0$ &$0$  & $0$ &$0$ &   $0$  & $0$ & $0$ & $0$ &   $0$&   $0$  \\
\hline
\begin{tiny}$U_{0,3}$\end{tiny} &$1$ & $2$ & $3$ & $0$  & $1$  & $2$  & $2$   & $3$  & $3$  & $3$  & $4$ & $2$   & $3$   & $3$   & $3$  &  $4$ & $4$  & $5$ & $5$ & $6$ & $3$ &  $4$& $5$ & $5$ &  $6$ & $6$ & $6$ & $7$ &  $6$ &   $7$    \\ 
\hline
\begin{tiny}$U_{1,1}$\end{tiny} &$0$ & $0$ & $0$  & $1$ & $1$  & $0$  & $1$   & $0$  & $0$  & $1$  & $0$ & $1$   & $0$   & $1$  &  $1$  & $0$ & $1$ &  $0$ &  $1$ &  $0$ & $1$ &  $1$ &  $1$& $1$ & $1$ & $0$ &   $1$ &   $1$&  $1$ & $1$  \\ 
\hline
\begin{tiny}$U^{(1)}_{1,2}$\end{tiny} &$0$ & $0$ & $0$  & $1$ & $1$  & $1$  & $1$   & $0$  & $1$  & $1$  & $1$ & $2$   & $1$   &$1$  & $2$  & $1$ & $2$ & $1$ & $2$ & $1$ & $2$ &  $2$ &  $2$  & $2$ & $2$ & $1$ &   $2$ &   $2$&  $3$ & $3$  \\ 
\hline
\begin{tiny}$U^{(2)}_{1,2}$\end{tiny} &$1$ & $1$ & $1$ & $1$  & $1$  & $1$  & $2$   & $1$ & $1$  & $2$  & $2$ & $2$   & $1$   & $2$  &  $2$   & $2$ & $3$ &$2$  & $3$ & $2$ & $2$ &  $2$ & $3$  & $3$ & $3$ & $2$ &   $2$  &   $4$&  $3$ & $4$ \\ 
\hline
\begin{tiny}$U^{(1)}_{1,3}$\end{tiny} & $0$ &  $0$ &  $0$  & $1$ & $1$  & $1$  & $1$   & $1$  & $1$  &$1$  & $1$ & $2$   & $2$   & $2$   & $2$  & $2$ & $2$  & $2$ & $2$ &  $2$ &$3$   &  $3$ & $3$  & $3$ & $3$ & $3$& $3$ &   $3$& $4$  & $4$ \\ 
\hline
\begin{tiny}$U^{(2)}_{1,3}$\end{tiny} &$1$ & $1$ & $1$ & $1$  & $1$  & $1$  & $2$   & $1$  & $2$  &$2$  & $2$ & $2$   & $2$   & $2$   & $2$  & $2$ & $3$ & $3$ & $3$ &  $3$ & $3$ & $3$  & $3$   & $4$& $4$ & $3$ & $4$  &   $4$& $4$  & $5$  \\ 
\hline
\begin{tiny}$U^{(3)}_{1,3}$\end{tiny} &$1$ & $2$ &$2$  &$1$  & $1$  & $2$  &$2$  & $2$  & $2$  & $3$  & $3$ & $2$   & $2$   & $2$   & $3$  & $3$ &$4$ & $4$ & $4$ &  $4$  & $3$  & $4$ & $4$  & $5$ & $5$ & $4$ &  $5$  &   $6$& $5$ & $6$    \\ 
\hline
\begin{tiny}$U_{1,4}$\end{tiny} &$1$ & $2$ & $2$ & $1$   & $2$  & $2$  & $3$  & $2$  & $3$  & $3$  &$4$ & $2$   & $3$   & $3$   & $4$  & $4$ & $5$ &$5$ & $6$ &$6$  & $4$&  $5$ &  $5$  & $6$ & $5$ & $5$ &   $6$&   $7$&  $5$ & $8$   \\ 
\hline
\begin{tiny}$U_{2,2}$\end{tiny} &$0$& $0$ & $0$ & $2$   &$1$  & $0$  & $1$  & $0$  & $0$  & $1$  & $0$ & $2$   & $0$   & $1$  &  $2$  & $0$& $2$ & $0$ & $1$ & $0$ &$2$ &  $2$ &  $1$   &$2$ & $1$ & $0$ &  $2$&   $1$&  $2$ & $2$  \\ 
\hline
\begin{tiny}$U^{(1)}_{2,3}$\end{tiny} &$0$ & $0$ & $0$  & $2$ & $1$  &$0$  & $1$   & $0$  & $0$  & $1$ & $0$ & $2$   & $1$   & $1$   & $2$    & $0$ & $2$  & $0$  &  $1$ &  $0$ & $3$   &  $2$ & $1$   & $2$ &  $1$ &  $0$ &   $2$&    $1$&   $2$ & $2$  \\ 
\hline
\begin{tiny}$U^{(2)}_{2,3}$\end{tiny} &$0$ & $0$ & $0$ & $2$  & $1$  &$1$  & $1$   & $0$  & $1$  & $1$  & $1$ & $2$   & $1$   & $2$  & $2$   & $1$ & $2$  &  $1$ &  $2$ &  $1$ &  $3$ &  $3$  & $2$   &$3$ &  $2$ &  $1$ &   $3$ &    $2$&   $3$ & $3$   \\ 
\hline
\begin{tiny}$U^{(3)}_{2,3}$\end{tiny} & $1$ &$1$ & $1$  & $2$& $1$  &$1$  & $2$   & $1$  & $1$  & $2$  & $2$ & $2$   & $1$   & $2$   &  $3$   & $2$ &  $3$ &  $2$& $3$  & $2$ & $3$  &  $3$ & $3$  & $4$ &  $3$ &  $2$ &    $4$ &    $4$&  $4$ & $4$    \\ 
\hline
\begin{tiny}$U^{(1)}_{2,4}$\end{tiny} &$0$ & $0$ & $0$  & $2$  & $2$   &$1$   & $2$    &$0$  & $1$  & $2$   & $1$  & $3$   & $2$  & $2$   & $3$   & $2$ & $3$ & $2$ &$3$ & $2$ & $4$&  $4$ & $3$  & $4$ & $3$ & $2$ &   $4$ &   $3$&  $4$ & $5$   \\ 
\hline
\begin{tiny}$U^{(2)}_{2,4}$\end{tiny} &$1$ & $2$ & $2$  & $2$  & $2$   & $2$   & $3$    & $2$   & $2$   &$3$   & $3$  & $3$   &$2$   & $3$   & $4$   & $3$  & $5$ & $4$ & $5$ & $4$ & $4$ &  $5$ & $5$   & $6$ & $5$ & $4$&  $6$  &   $6$&  $6$ &$7$  \\ 
\hline
\begin{tiny}$U^{(1)}_{2,5}$\end{tiny} &$1$ & $1$ & $1$  & $2$ & $2$  & $2$  & $3$   &  $1$ & $2$  & $3$  & $3$ & $4$   & $3$   & $3$  & $4$   & $3$ & $5$ & $4$ &$5$  & $4$ & $5$ & $5$&    $5$ &  $6$ &  $5$ &  $4$ &    $6$&    $6$&   $6$ &  $8$   \\ 
\hline
\begin{tiny}$U^{(2)}_{2,5}$\end{tiny} &$1$ & $2$ & $2$  &$2$ & $2$  & $2$  & $3$   & $2$ & $3$  & $4$  & $4$ & $4$   & $3$   & $4$  & $5$  & $4$  & $6$ & $5$ & $6$ & $3$ & $5$ &   $6$ &    $6$  &  $7$ &  $7$ &  $5$ &    $7$&    $8$&   $6$ &  $9$   \\ 
\hline
\begin{tiny}$U_{2,6}$\end{tiny} &$1$ & $2$ & $2$   & $2$  & $2$  & $2$  & $3$ & $2$  & $3$  & $4$  & $4$ & $4$   & $4$   & $4$  &  $5$  &  $4$ &  $6$ &  $5$ & $6$ & $6$ & $6$ &  $6$ &    $6$ & $8$ & $7$ & $6$ &   $8$&   $8$&  $8$ & $9$   \\ 
\hline
\begin{tiny}$U_{3,3}$\end{tiny} &$0$ & $0$& $0$  & $3$ & $1$  & $0$ & $1$   & $0$  & $0$  & $1$  &$0$ & $2$   &$0$   & $1$   &  $2$     & $0$ &  $2$& $0$ &$1$ & $0$ & $3$ &   $2$& $1$   & $2$ & $1$ & $0$ &   $2$  &   $1$&  $2$ & $2$   \\ 
\hline
\begin{tiny}$U_{3,4}$\end{tiny} &$0$ &$0$ & $0$ & $3$ & $2$  & $1$  & $2$   & $0$  &$1$  & $2$  &$1$ & $3$   & $2$   & $2$ &  $4$   & $1$  & $3$ & $1$  & $3$ & $1$  & $4$ & $4$  &   $3$ & $4$ & $3$ &$1$ &   $4$&   $3$&  $4$ & $4$ \\ 
\hline
\begin{tiny}$U^{(1)}_{3,5}$\end{tiny} &$0$ & $0$ & $0$ & $3$  & $2$  & $1$  & $2$   & $0$ & $1$  & $2$  & $1$ & $4$   & $2$   & $3$   & $4$  &$2$ & $4$ &$2$ & $3$ & $2$  & $5$ &  $5$ & $4$  & $5$ & $4$ & $2$ &   $5$ &  $4$&  $5$ & $6$     \\ 
\hline
\begin{tiny}$U^{(2)}_{3,5}$\end{tiny} &$1$ & $1$ & $1$ & $3$ &$2$  & $2$  & $3$  & $1$  & $2$  &$3$  & $3$ & $4$   & $2$   & $3$  &  $4$   & $3$ & $5$  & $3$ & $5$ & $3$ & $5$ &  $5$ &  $5$   & $6$ & $5$ & $3$ &   $6$&   $6$&  $6$ & $7$   \\ 
\hline
\begin{tiny}$U_{3,6}$\end{tiny} &$1$ & $1$ & $1$ & $3$  & $2$  & $2$  & $3$  & $1$  & $2$  & $3$  & $3$ & $4$   & $3$   & $4$   & $5$   & $4$ & $5$ & $4$ & $5$ & $4$ &  $6$ &   $6$ &   $5$  &  $7$ &  $6$ &  $4$ &    $7$&    $6$&   $7$ &  $8$  \\ 
\hline
\begin{tiny}$U^{(1)}_{3,6}$\end{tiny} &$0$ & $0$ & $0$   & $3$  & $2$ & $1$  & $2$ & $0$  & $1$  & $2$  & $2$ & $4$  & $3$   & $3$  &  $4$   & $2$ & $4$ &$2$ & $3$ & $2$ & $6$&   $5$ &     $4$ &  $5$ &  $4$ &  $3$ &   $4$&    $5$&   $6$&  $6$  \\ 
\hline
\begin{tiny}$U^{(2)}_{3,6}$\end{tiny} &$1$ & $2$ & $2$   & $3$  & $2$  &$2$  & $3$ & $1$  & $2$  & $3$  & $3$ & $4$   & $3$   & $4$  &  $5$ & $5$& $6$ & $5$ & $6$ &$5$ & $6$&   $6$ &     $6$ & $7$ &  $7$ & $5$ &    $8$&    $8$&  $8$ &  $8$  \\ 
\hline
\begin{tiny}$U_{3,7}$\end{tiny} &$1$ & $2$ &$2$  & $3$& $3$  & $3$  & $4$   & $2$  & $3$  & $4$  &  $4$ &  $5$   &  $4$   &  $5$  &  $6$   & $5$ &$7$ & $5$ & $7$ &$6$  &$7$  &  $8$ &   $7$  &$9$  &$8$  & $6$  &   $9$ &   $9$ &  $8$  & $11$    \\ 
\hline
\begin{tiny}$U_{4,6}$\end{tiny} & $0$ & $0$ &$0$   & $4$  & $2$  & $2$  & $2$ & $0$  & $1$  &$2$ & $2$& $4$   &$2$   & $3$  &  $4$ & $2$  & $4$  &$2$  & $3$  & $3$  & $6$  &  $5$  &   $4$  &$5$ & $4$  & $2$  &   $5$ &   $4$ &  $3$ & $6$  \\ 
\hline
\begin{tiny}$U_{4,7}$\end{tiny} &$1$ &$1$ &$1$   & $4$ & $3$  &$2$  & $3$& $1$  & $2$  & $4$  & $3$ & $5$   & $3$  & $4$ &  $6$    & $3$ & $6$ & $4$ & $6$ & $4$ & $7$ &  $8$ &    $6$ & $8$ & $6$ & $4$ &   $8$&  $8$&  $7$ & $9$ \\ 
\hline
\end{longtable} \caption{Dimensions of homomorphism spaces}\label{fig2}
\end{figure}\end{landscape}

\section{A finiteness criterion}\label{fincrit}
We consider the $P$-action on $\N_n^{(x)}$ and prove a criterion as to whether the action admits finitely many or infinitely many orbits. 
\begin{theorem}\label{classfinpar}
 There are only finitely many $P$-orbits in $\N_n^{(x)}$ if and only if $x\leq 2$, or $P$ is maximal and $x=3$.
\end{theorem}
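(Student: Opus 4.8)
The plan is to prove the two implications separately; the \emph{if} direction is essentially already contained in the results quoted above. For $x\le 1$ the variety $\N_n^{(x)}$ is a single point, and for $x=2$ one invokes Theorem \ref{paraboliccase}, which puts the $P$-orbits in $\N_n^{(2)}$ in bijection with the enhanced oriented link patterns of type $(b_1,\punkte,b_p)$, of which there are finitely many. For $x=3$ and $P$ maximal, i.e.\ $p=2$, one invokes Proposition \ref{indec3nilp}: it lists \emph{all} indecomposable representations of $\A(2,3)=K\Q_2/I_3$ up to isomorphism, there are only finitely many, hence $\A(2,3)$ is representation-finite, so $\rep^{\inj}_K(\Q_2,I_3)(\dfp)$ has finitely many isomorphism classes, which by Lemma \ref{bijection} biject with the $P$-orbits in $\N_n^{(3)}$.

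For the \emph{only if} direction one may assume $P$ is a proper parabolic, so $p\ge 2$, and the task is to show that the action has infinitely many orbits in every remaining case, namely $x\ge 4$, or $x=3$ with $p\ge 3$. By Lemma \ref{bijection} this is equivalent to producing, for suitable $n$ and block sizes, infinitely many pairwise non-isomorphic representations in $\rep^{\inj}_K(\Q_p,I_x)(\dfp)$. I would first reduce to the two pairs $(p,x)$ that are minimal for the componentwise order, namely $(2,4)$ and $(3,3)$: given a family $(M^{(\lambda)})_\lambda$ of pairwise non-isomorphic representations in $\rep^{\inj}_K(\Q_{p_0},I_{x_0})(\df_0)$ for a minimal pair $(p_0,x_0)$ and any $p\ge p_0$, $x\ge x_0$, one views $M^{(\lambda)}$ as a representation of $\Q_p$ by putting zero spaces on the $p-p_0$ vertices prepended on the left (the relation $\alpha^x$ still holds since $\alpha^{x_0}$ does and $x\ge x_0$), and sets $\widetilde M^{(\lambda)}:=M^{(\lambda)}\oplus S$ for one fixed $S\in\rep^{\inj}_K(\Q_p,I_x)$ with strictly increasing local dimensions, chosen so that $\widetilde M^{(\lambda)}$ lands in $\rep^{\inj}_K(\Q_p,I_x)(\dfp)$ for an appropriate parabolic; these are pairwise non-isomorphic by Krull--Remak--Schmidt cancellation. (The sharper assertion that $\A(p,x)$ is wild in all these cases, promised in the introduction, reduces to the same two pairs purely algebraically, since $\A(p_0,x_0)$ is a quotient of $\A(p_0,x)=e\A(p,x)e$ for the idempotent $e$ on the last $p_0$ vertices, and both ``quotient'' and ``$eAe$'' inherit wildness.)

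It then remains to handle $(p,x)\in\{(2,4),(3,3)\}$, and here I would proceed as in the proof of Proposition \ref{indec3nilp}, via the universal cover $\widehat\Q$: it consists of $\mathbf Z$ copies of the linear quiver $1\to\punkte\to p$ together with an arrow from the vertex $p$ of the $i$-th copy to the vertex $p$ of the $(i+1)$-st copy, with $\widehat I$ generated by the paths of length $x$ along that column, and, through the covering functor, the representation type of $\A(p,x)$ is controlled by the finite truncations $\widehat\Q(k)/\widehat I(k)$. For $(2,3)$ all these truncations are representation-finite (Figure \ref{fig1}); for $(2,4)$ and $(3,3)$, by contrast, one checks that $\widehat\Q(k)/\widehat I(k)$ is representation-infinite --- indeed wild --- for $k$ large, so $\A(p,x)$ is wild, and one extracts from a suitable truncation a two-parameter family of pairwise non-isomorphic indecomposable representations and lifts it through the covering functor. \textbf{The main obstacle is this last step}: the representation-infiniteness of the algebras is straightforward, but one must realise the parametrised representations \emph{inside} the open subcategory $\rep^{\inj}_K(\Q_p,I_x)$ --- so that every arrow $\alpha_i$ acts injectively --- and with a dimension vector of the rigid ``staircase'' shape $\dfp=(b_1,b_1+b_2,\punkte,n)$ coming from an actual parabolic, since only then does Lemma \ref{bijection} convert them into genuinely distinct $P$-orbits in $\N_n^{(x)}$. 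Once such a family is exhibited in each of the two minimal cases, both the theorem and the wildness statement follow.
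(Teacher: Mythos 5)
Your \emph{if} direction matches the paper exactly. For the \emph{only if} direction, however, the paper does something much more elementary than what you propose: it simply writes down, for each remaining case, an explicit $1$-parameter family of pairwise non-$P$-conjugate matrices --- a family $D_x(\lambda)$ for every non-maximal $P$ and $x\geq 3$, and families $E^s(n,\lambda)$ (block sizes both $\geq 2$) and $F(n,\lambda)$ (block sizes $(1,n-1)$) for maximal $P$ and $x\geq 4$. The wildness of $K\Q_p/I_x$ is proved separately afterwards and is \emph{not} an ingredient of the finiteness criterion. Your route through the universal cover and representation type is not wrong in spirit, but as written it contains a genuine gap that you yourself flag as ``the main obstacle'' and then do not close: representation-infiniteness of the algebra does not by itself give infinitely many orbits for a \emph{given} parabolic, because the non-isomorphic modules it produces must additionally be realized with all $\alpha_i$ injective and with the staircase dimension vector $\dfp$ of that parabolic before Lemma \ref{bijection} applies. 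Exhibiting such families is the entire content of the only-if direction, and your proof ends exactly where that work begins.

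There is a second, independent problem with your reduction to the minimal pairs $(2,4)$ and $(3,3)$. Padding a base family by a fixed summand $S$ (and prepending zero vertices) only reaches parabolics whose block sizes componentwise dominate those of the base family, whereas the theorem demands infinitude for \emph{every} remaining parabolic, including those with blocks of size $1$. This is not a cosmetic issue: the paper is forced to treat the maximal parabolic of block sizes $(1,n-1)$ with the separate family $F(n,\lambda)$ precisely because the generic construction $E^s(n,\lambda)$ needs both blocks of size at least $2$; and for $x=3$ your argument must also reach, e.g., the Borel of $\GL_3$ acting on $\N_3^{(3)}$, where the paper explicitly remarks that no $2$-parameter family of pairwise non-conjugate matrices exists --- so your plan of extracting a $2$-parameter family from a wild truncation and then padding cannot cover that case; a $1$-parameter family at the minimal dimension vector is what is actually needed there. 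To repair the proof you would either have to produce base families at the smallest admissible dimension vectors for each shape of parabolic (which is essentially what the paper's explicit matrices accomplish), or give a separate argument for the small-block cases.
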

\begin{proof}
If $x=2$, our considerations in Section \ref{x2} yield finiteness for every parabolic subgroup $P$; if $x=3$ and $P$ is maximal, then our considerations in Section \ref{degree3} yield the claim.\\[1ex]
Now let $P$ be a non-maximal parabolic subgroup and let $x\geq 3$. The action of $P$ on $\N_n^{(x)}$ admits infinitely many orbits, because 
\[(D_x(\lambda))_{i,j}=\left\lbrace\begin{array}{ll}
\lambda, & \textrm{if}~i=n~\textrm{and}~j=1; \\ 
1, & \textrm{if}~(1\leq i<n~\textrm{and}~j=1)~ \textrm{or}~(i=n~\textrm{and}~1\leq j<n);\\ 
0, & \textrm{otherwise}. 
                                   \end{array}\right.\]
yields a $1$-parameter family of pairwise non-$P$-conjugate matrices for $\lambda\in K^*$.\\[1ex]
If $P$ is a maximal parabolic subgroup of block sizes $(x,y)$, then the action of $P$ on $\N_n^{(4)}$  admits infinitely many orbits:
\begin{enumerate}
 \item If $x=s+2\geq 2$ and $y=t+2\geq 2$ for $s,t\leq 0$, then the matrices
\[(E^s(n,\lambda))_{i,j}:=\left\lbrace\begin{array}{ll} 
(E(\lambda))_{i-s,j-s}, & \textrm{if}~s+1\leq i,j\leq s+4; \\ 
0, & \textrm{otherwise}.
                                   \end{array}\right.\]
where \[E(\lambda)\coloneqq\left( \begin{array}{cccc}
0 & 0 & 0 & 0 \\ 
1 & 0 & 0 & 0 \\ 
1 & 1 & 0 & 0 \\ 
\lambda & 1 & 1 & 0
    \end{array}\right)\]
for $\lambda\in K^*$, induce a $1$-parameter family of pairwise non-$P$-conjugate matrices.
\item If (without loss of generality) $x=1$ and $y=n-1$, then for $\lambda\in K^*$, the matrices 
\[(F(n,\lambda))_{i,j}=\left\lbrace\begin{array}{ll}
(F(\lambda))_{i,j}, & \textrm{if}~1\leq i,j\leq 4; \\ 
0, & \textrm{otherwise}.
                                   \end{array}\right.\]
where \[F(\lambda)\coloneqq\left( \begin{array}{cccc}
1 & 1 & 0 & 0 \\ 
-1 & -1 & 0 & 0 \\ 
\lambda-1 &  \lambda & -1 & 1 \\ 
\lambda &\lambda-1 & -1 & 1
    \end{array}\right)\]
induce a $1$-parameter family of pairwise non-$P$-conjugate matrices.\qedhere
\end{enumerate}
 \end{proof}

\section{A wildness criterion}
Let us fix $p>1$. Theorem \ref{fincrit} shows that the algebra $K\Q_p/I_x$ is of finite representation type if and only if $x\in\{1,2\}$, or $p=2$ and $x=3$.
In this section, it will be shown that each remaining algebra is of wild representation type.
\begin{proposition}\label{reptype}
 The algebra $K\Q_p/I_x$ is of wild representation type if and only if it is not of finite representation type.
\end{proposition}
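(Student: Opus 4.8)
We argue via the tame--wild dichotomy: by Drozd's theorem it is enough to show that, in every case where $K\Q_p/I_x$ is not of finite representation type, it is not tame. The ``only if'' direction is immediate, since a representation-finite algebra has only finitely many indecomposables whereas a wild one carries arbitrarily large families of pairwise non-isomorphic indecomposables. For the ``if'' direction, recall from Theorem~\ref{classfinpar} together with the explicit classifications of the finite cases (Lemma~\ref{indec2nilp}, Proposition~\ref{indec3nilp}, and the trivial case $x=1$) that $K\Q_p/I_x$ is representation-infinite exactly when $p=2$ and $x\geq 4$, or $p\geq 3$ and $x\geq 3$.

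The plan is first to reduce to two minimal cases. If $p'\leq p$, then $\Q_{p'}$ is the convex subquiver of $\Q_p$ on the last $p'$ vertices, and extension by zero on the omitted source vertices gives a full exact embedding $\rep_K(\Q_{p'},I_x)\hookrightarrow\rep_K(\Q_p,I_x)$. If $x'\leq x$, then $\alpha^x\in(\alpha^{x'})$, so every module bound by $I_{x'}$ is bound by $I_x$ and $\rep_K(\Q_p,I_{x'})$ is a full exact subcategory of $\rep_K(\Q_p,I_x)$. Since wildness passes upward along full exact embeddings of module categories (compose the defining bimodule), it suffices to show that $K\Q_3/I_3$ and $K\Q_2/I_4$ are wild.

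For these two algebras I would use covering theory as in the proof of Proposition~\ref{indec3nilp}: pass to the covering $\widehat\Q$ of $\Q_p$ obtained by unrolling the loop $\alpha$ into a $\mathbf{Z}$-indexed line $\cdots\to c_{i-1}\to c_i\to c_{i+1}\to\cdots$ with a copy of the $A_{p-1}$-tail attached at each $c_i$, bound by the ideal $\widehat I$ generated by the products of any $x$ consecutive line-arrows; by covering theory $K\Q_p/I_x$ is wild if and only if $K\widehat\Q/\widehat I$ is. It then suffices to exhibit a finite convex subquiver $\Delta$ of $\widehat\Q$ carrying no relation of $\widehat I$ (equivalently, containing at most $x-1$ consecutive line-arrows) and whose underlying graph is neither Dynkin nor extended Dynkin: extension by zero outside $\Delta$ is a full exact embedding $\rep_K(\Delta)\hookrightarrow\rep_K(\widehat\Q,\widehat I)$, and $K\Delta$ is wild by Gabriel's theorem and the classification of tame hereditary algebras. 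For $K\Q_3/I_3$ one takes $\Delta$ to be the star $T_{4,4,3}$ consisting of a loop-vertex $c_i$, its two tail-vertices (a leg of two edges), and the two neighbouring loop-vertices $c_{i-1},c_{i+1}$ each together with their two tail-vertices (legs of three edges, each using a single line-arrow): only two consecutive line-arrows occur, so $\widehat I$ is trivial on $\Delta$, and $T_{4,4,3}$ is wild. For $K\Q_2/I_4$ one takes $\Delta$ to be the eight-vertex ``dumbbell'' $c_0\to c_1\to c_2\to c_3$ (three consecutive line-arrows, hence no relation for $x=4$) with the pendant tail-vertex attached at each $c_j$; its underlying graph has two adjacent trivalent vertices with legs of lengths $1,2,1,2$, so it is neither Dynkin nor extended Dynkin, hence wild.

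The main obstacle is the tight numerology in this last step: $\Delta$ must simultaneously avoid all relations of $\widehat I$ and still fail to be (extended) Dynkin, and in the minimal cases there is essentially no slack --- one must exploit the tails (of length $\geq 2$ when $p\geq 3$) or, for $p=2$, a configuration with two trivalent vertices rather than a single branch point, which would only produce an $E$- or $\widetilde E$-type graph. One should also take care that covering theory is applied to locally bounded categories and that extension by zero along a convex subquiver is indeed full and exact; both points are standard. A more computational alternative, in the spirit of the proof of Theorem~\ref{classfinpar}, is to write down explicitly for each of the two minimal cases a genuine two-parameter family $\{N(\lambda,\mu)\}$ of pairwise non-conjugate $x$-nilpotent matrices, which already forces the corresponding algebra not to be tame, hence wild by Drozd's theorem.
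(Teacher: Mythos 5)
Your proposal is correct and follows essentially the same route as the paper: pass to the $\mathbf{Z}$-covering of $\Q_p$ and exhibit a relation-free convex subquiver that is neither Dynkin nor extended Dynkin, and indeed your two configurations (the star with arms of $2,3,3$ edges for $x=3$, $p\geq 3$, and the eight-vertex ``dumbbell'' for $x\geq 4$) are exactly the subquivers $\Q'$ and $\Q''$ used in the paper. The only difference is presentational: you first reduce to the minimal cases $K\Q_3/I_3$ and $K\Q_2/I_4$ via full exact embeddings, whereas the paper locates the same subquivers directly in the covering for general $p$ and $x$.
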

\begin{proof} If $K\Q_p/I_x$ is not of finite representation type, then either $x=3$ and $p>2$, or $x\geq 4$.\\[1ex]
 If $x=3$ and $p>2$, then the covering quiver of $K\Q_p/I_x$ at the vertex $p$ contains the subquiver
\begin{center}{\small\begin{tikzpicture}
\matrix (m) [matrix of math nodes, row sep=2em,
column sep=3em, text height=1.5ex, text depth=0.2ex]
{ & \underset{1}\bullet& \underset{2}\bullet  & \underset{3}\bullet \\ 
\Q'\colon & \underset{4}\bullet& \underset{5}\bullet  & \underset{6}\bullet \\
 & \underset{7}\bullet& \underset{8}\bullet  & \underset{9}\bullet \\};
\path[->]
(m-1-2) edge  (m-1-3)
(m-2-2) edge   (m-2-3)
(m-3-2) edge  (m-3-3)
(m-1-3) edge  (m-1-4)
(m-2-3) edge   (m-2-4)
(m-3-3) edge  (m-3-4)
(m-1-4) edge[bend left=20] node[right=0.05cm] {$\alpha_{1}$} (m-2-4)
(m-2-4) edge[bend left=20] node[right=0.05cm] {$\alpha_2$} (m-3-4);\end{tikzpicture}}\end{center}
without any relations. \\[1ex]
If $x\geq 4$, then the covering quiver of $K\Q_p/I_x$ at the vertex $p$ contains the subquiver
\begin{center}{\small\begin{tikzpicture}\matrix (m) [matrix of math nodes, row sep=1em,
column sep=3em, text height=1.5ex, text depth=0.2ex]
{ & \bullet  & \bullet \\ 
\Q''\colon  & \bullet  & \bullet \\
& \bullet  & \bullet \\
 & \bullet  & \bullet \\};
\path[->]
(m-1-2) edge  (m-1-3)
(m-2-2) edge   (m-2-3)
(m-3-2) edge  (m-3-3)
(m-4-2) edge  (m-4-3)
(m-1-3) edge node[right=0.05cm] {$\alpha_{1}$} (m-2-3)
(m-2-3) edge node[right=0.05cm] {$\alpha_2$} (m-3-3)
(m-3-3) edge node[right=0.05cm] {$\alpha_{3}$} (m-4-3);\end{tikzpicture}}\end{center}
without any relations.\\[1ex]
These subquivers are not quivers of extended Dynkin types, therefore, the algebra $K\Q_p/I_x$ is of wild representation type.
\end{proof}
We have shown that $K\Q_p/I_x$ is never of infinite tame representation type.\\[1ex]
Note that we cannot conclude that each parabolic action admits $2$-parameter families of non-conjugate matrices. It is possible that certain parabolic actions admit at most $1$-parameter families of pairwise non-conjugate matrices - one example is the Borel-action on the nilpotent cone for $n=3$.
It is natural to try to exhibit a $2$-parameter family of pairwise non-conjugate matrices for at least one parabolic action corresponding to $K\Q_p/I_x$, though. \\[1ex]
By following a method for constructing indecomposable modules T. Weist describes in \cite{Wei1}, we are able to find such:
Let $U$ and $U'$ be two indecomposable representations of a finite-dimensional path algebra $\A=K\Q$, such that $\dimv U$ and $\dimv U'$ are real roots and such that the root $\dimv U+\dimv U'$ is an imaginary root of $\Q$. Assume that $[U',U]=0=[U,U']^1$ and $[U',U]^1=3$, then the representatives $X$ of the middleterms of the classes of extensions 
$$[0\rightarrow U\rightarrow X\rightarrow U'\rightarrow 0]$$ 
yield a $2$-parameter family of pairwise non-isomorphic indecomposable $\A$-representations.\\[1ex]
We consider the two cases that come up in the proof of Proposition \ref{reptype} and find the following matrices by making use of the above described method. The proof follows directly from the construction, though, but can be calculated straight forward as well. We describe the first case in detail, the second one is left to the reader.\\[1ex]
Let $P$ be the parabolic subgroup $P$ of block sizes $(3,4,3)$. 
\begin{proposition}
A $2$-parameter family of pairwise non-$P$-conjugate matrices in $\N^{(3)}$ is induced by the matrices $$
N_{\lambda,\mu}=\left( \begin{array}{cccccccccc}
0 & 0 & 0 &0 & 0&0 &0&0&0&0\\ 
0 & 0 & 0 &0 & 0&0 &0&1&0&0\\ 
0 & 1 & 0 &0 & 0&0 &0&0&0&0\\ 
0 & 0 & 0 &0 & 0 &0 &0&0&0&0\\ 
0 & 0 & 0 &0 & 0&0 &0&-1&0&0\\ 
1 & 0 & 0 &0 & 0&0 &0&0&0&0\\ 
0 & 1 & 0 &0 & 1&-\mu &0&0&1&0\\  
0 & 0 & 0 &0 & 0&0 &0&0&0&0\\  
\lambda & 0 & 0 &1 & 0&0 &0&0&0&0\\ 
0 & -1 & 0 &0 & 0&1 &0&0&0&0\end{array}\right) $$
for $\lambda,\mu\in K^*$.
\end{proposition}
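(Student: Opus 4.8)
The plan is to reduce the statement to the non-isomorphy of a family of representations of $\A(3,3)=K\Q_3/I_3$ and to obtain that family from the method of Weist recalled above. Since $P$ has block sizes $(3,4,3)$, we have $n=10$ and $\dfp=(3,7,10)$; by Lemma~\ref{bijection} a matrix $N\in\N_{10}^{(3)}$ corresponds to the representation $M^N$ of the shape $K^{3}\hookrightarrow K^{7}\hookrightarrow K^{10}\circlearrowleft N$ with natural embeddings, and $N$ and $N'$ are $P$-conjugate if and only if $M^N\cong M^{N'}$. So it suffices to verify (i) $N_{\lambda,\mu}^{3}=0$ for all $\lambda,\mu$, which is a direct computation with the displayed sparse matrix, and (ii) that the representations $M^{N_{\lambda,\mu}}$ are pairwise non-isomorphic for distinct $(\lambda,\mu)\in(K^*)^2$; then the displayed matrices lie in $\N^{(3)}$ and are pairwise non-$P$-conjugate.

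For (ii) I would work in the covering quiver $\widehat{\Q}$ of $K\Q_3/I_3$ at the vertex $p=3$, which by the proof of Proposition~\ref{reptype} contains the relation-free subquiver $\Q'$ (the $3\times3$ grid with the two extra arrows $\alpha_1,\alpha_2$ in the last column). I would exhibit two indecomposable representations $U,U'$ supported on $\Q'$ whose dimension vectors are real roots, whose sum $\dimv U+\dimv U'$ is an imaginary root, and which satisfy $[U',U]=0=[U,U']^1$ together with $[U',U]^1=3$ --- all of which is checked on the wild quiver $\Q'$ by the Euler form and a short calculation of the relevant $\Hom$- and $\Ext^1$-spaces. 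By Weist's theorem the middle terms $X$ of the non-split extensions $0\to U\to X\to U'\to 0$ then constitute a two-parameter family of pairwise non-isomorphic indecomposables of $K\widehat{\Q}/\widehat{I}$. Pushing this family down along the Galois covering functor --- which sends indecomposables to indecomposables and identifies exactly the $\mathbf{Z}$-orbits of isomorphism classes, so that our (finitely supported, non-periodic) family stays a family of pairwise non-isomorphic representations of $K\Q_3/I_3$ --- one checks that the pushed-down dimension vector equals $\dfp$, that the two arrow maps become injective (so we land in $R_{\dfp}^{\inj}(\Q_3,I_3)$), and, after a change of basis adapted to the flag $K^{3}\subset K^{7}\subset K^{10}$, that the resulting matrices are precisely the $N_{\lambda,\mu}$ displayed. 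The correspondence of Lemma~\ref{bijection} then yields (ii).

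An alternative --- the ``straightforward'' route --- avoids covering theory. Assume $S\in P$ satisfies $S^{-1}N_{\lambda,\mu}S=N_{\lambda',\mu'}$. Writing $S$ in block-upper-triangular form for $K^{10}=K^{3}\oplus K^{4}\oplus K^{3}$ and tracking its action on the $P$-stable flag, one lists the $P$-orbit invariants: the dimensions of the subspaces $N^{i}V_{j}\cap V_{\geq k}$ restricted to the block flag (these are discrete) together with one continuous invariant per independent parameter --- a $2\times2$ subdeterminant of the kind that produces the parameter in $D_x(\lambda)$ and $E(\lambda)$ in the proof of Theorem~\ref{classfinpar} --- and from the latter reads off $\lambda$ and $\mu$. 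Equating the invariants of $N_{\lambda,\mu}$ and $N_{\lambda',\mu'}$ forces $(\lambda,\mu)=(\lambda',\mu')$.

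The main obstacle, in both routes, is the same bookkeeping. In the covering route it is pinning down $U$ and $U'$ and verifying the precise equalities $[U',U]=0$, $[U,U']^1=0$, $[U',U]^1=3$, so that $\Ext^1(U',U)$ is three-dimensional with indecomposable generic middle term, and then carrying the chosen extension representatives faithfully through the push-down functor and the change of basis to recover the displayed $N_{\lambda,\mu}$. In the direct route it is singling out exactly the two independent continuous invariants among the many discrete ones and confirming that $N_{\lambda,\mu}$ realizes a generic, rather than a degenerate, extension. Once the family is in place, membership in $\N^{(3)}$ and the wildness of $K\Q_p/I_x$ follow as already indicated.
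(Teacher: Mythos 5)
Your primary route is exactly the paper's proof: the paper applies Weist's construction to two explicit indecomposables $U_{\underline{e}}$, $U_{\underline{e}'}$ on the relation-free covering subquiver $\Q'$, whose dimension vectors are real roots summing to the imaginary root $(1,2,3,1,3,4,1,2,3)$, uses $[\U_{\underline{e}'},\U_{\underline{e}}]^1=3$ to glue them along the three extension parameters, and then normalizes the pushed-down representation to the flag form $K^3\xrightarrow{e_{3,7}}K^7\xrightarrow{e_{7,10}}K^{10}$ with loop $N_{\lambda,\mu}$. The only difference is that the paper carries out the bookkeeping you defer, by exhibiting $U_{\underline{e}}$ and $U_{\underline{e}'}$ explicitly and displaying the glued representation.
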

\begin{proof}
Let us consider the quiver $Q'$ of Proposition \ref{reptype} and the indecomposable $K\Q$-representations $U_{\ef}$ and $U_{\eef}$, where $\dimv U_{\ef}=:\ef$ and $\dimv U_{\eef}=:\eef$:
\begin{center}\begin{tikzpicture}[descr/.style={fill=white,inner sep=1.5pt}]
\matrix (m) [matrix of math nodes, row sep=2em,
column sep=3em, text height=1.5ex, text depth=0.2ex]
{ &  0 & K  & K & &  K & K  & K^2\\ 
U_{\ef}\colon & 0 & K  & K^2&  U_{\eef}\colon & K & K^2  & K^2 \\
 & 0 & K  & K &  & K & K  & K^2 \\};
\path[->]
(m-1-2) edge node[descr] {\begin{scriptsize}$0$\end{scriptsize}} (m-1-3)
(m-2-2) edge node[descr] {\begin{scriptsize}$0$\end{scriptsize}} (m-2-3)
(m-3-2) edge node[descr] {\begin{scriptsize}$0$\end{scriptsize}} (m-3-3)
(m-1-3) edge node[descr] {\begin{scriptsize}$\id$\end{scriptsize}} (m-1-4)
(m-2-3) edge node[descr] {\begin{scriptsize}$e_2$\end{scriptsize}} (m-2-4)
(m-3-3) edge node[descr] {\begin{scriptsize}$\id$\end{scriptsize}} (m-3-4)
(m-1-4.south) edge node[descr] {\begin{scriptsize}$e_1$\end{scriptsize}} (m-2-4)
(m-2-4) edge node[descr] {\begin{scriptsize}$(e_1+e_2)^t$\end{scriptsize}} (m-3-4)
(m-1-6)  edge node[descr] {\begin{scriptsize}$\id$\end{scriptsize}} (m-1-7)
(m-2-6)  edge node[descr] {\begin{scriptsize}$e_1$\end{scriptsize}}  (m-2-7)
(m-3-6)  edge node[descr] {\begin{scriptsize}$\id$\end{scriptsize}} (m-3-7)
(m-1-7)  edge node[descr] {\begin{scriptsize}$e_1+e_2$\end{scriptsize}} (m-1-8)
(m-2-7)  edge node[descr] {\begin{scriptsize}$\id$\end{scriptsize}}  (m-2-8)
(m-3-7)  edge node[descr] {\begin{scriptsize}$e_2$\end{scriptsize}} (m-3-8)
(m-1-8.south) edge node[descr] {\begin{scriptsize}$\id$\end{scriptsize}} (m-2-8)
(m-2-8) edge node[descr] {\begin{scriptsize}$\id$\end{scriptsize}} (m-3-8);\end{tikzpicture}\end{center}
Then $\ef$ and $\eef$ are positive real roots; their sum is an imaginary root $\df=(1,2,3,1,3,4,1,2,3)$ (that is, its Tits form is negative) 
which fulfills the assumptions of the above mentioned construction algorithm.
Since $[\U_{\eef},\U_{\ef}]^1=3$, we use the extensions to glue the two representations together in order to obtain the sought representations, here $\lambda,\mu\in K$:
\begin{center}\begin{tikzpicture}[descr/.style={fill=white,inner sep=2.5pt}]
\matrix (m) [matrix of math nodes, row sep=1em,
column sep=2em, text height=1.5ex, text depth=0.2ex]
{   0 & & K  & & K &\\ 
 & K & & K & & K^2 \\[2ex]
 0 & & K & & K^2&  \\
& K&  &K^2&& K^2 \\[2ex]
 0& & K & & K &   \\
&K  &&K &&K^2\\ };
\path[->]
(m-1-1) edge node[above=0.015cm] {\begin{scriptsize}$0$\end{scriptsize}} (m-1-3)
(m-3-1) edge node[above=0.015cm] {\begin{scriptsize}$0$\end{scriptsize}} (m-3-3)
(m-5-1) edge node[above=0.015cm] {\begin{scriptsize}$0$\end{scriptsize}} (m-5-3)
(m-1-3) edge node[above=0.015cm] {\begin{scriptsize}$\id$\end{scriptsize}} (m-1-5)
(m-3-3) edge node[above=0.015cm] {\begin{scriptsize}$e_2$\end{scriptsize}} (m-3-5)
(m-5-3) edge node[above=0.015cm] {\begin{scriptsize}$\id$\end{scriptsize}} (m-5-5)
(m-1-5) edge node[descr,below=0.2cm] {\begin{scriptsize}$e_1$\end{scriptsize}} (m-3-5)
(m-3-5) edge node[descr,below=0.2cm] {\begin{scriptsize}$(e_1+e_2)^t$\end{scriptsize}} (m-5-5)
(m-2-2)  edge node[above=0.015cm] {\begin{scriptsize}$\id$\end{scriptsize}} (m-2-4)
(m-4-2)  edge node[above=0.015cm] {\begin{scriptsize}$e_1$\end{scriptsize}}  (m-4-4)
(m-6-2)  edge node[above=0.015cm] {\begin{scriptsize}$\id$\end{scriptsize}} (m-6-4)
(m-2-4)  edge node[above=0.17cm,left=0.52cm] {\begin{scriptsize}$e_2$\end{scriptsize}} (m-2-6)
(m-4-4)  edge node[above=0.17cm,left=0.52cm] {\begin{scriptsize}$\id$\end{scriptsize}}  (m-4-6)
(m-6-4)  edge node[above=0.14cm,left=0.08cm] {\begin{scriptsize}$e_1+e_2$                      \end{scriptsize}} (m-6-6)
(m-2-6) edge node[descr] {\begin{scriptsize}$\id$\end{scriptsize}} (m-4-6)
(m-4-6) edge node[descr] {\begin{scriptsize}$\id$\end{scriptsize}} (m-6-6)
(m-2-2) edge[lblue,line width=0.03cm] node[lblue,line width=0.04cm,descr] {\begin{scriptsize}$\lambda$\end{scriptsize}} (m-1-3)
(m-4-2) edge[lblue,line width=0.03cm] node[lblue,line width=0.04cm,descr] {\begin{scriptsize}$\id$\end{scriptsize}} (m-3-3)
(m-6-2) edge[lblue,line width=0.03cm] node[lblue,line width=0.04cm,descr] {\begin{scriptsize}$\mu$\end{scriptsize}} (m-5-3);
\end{tikzpicture}\end{center}
We obtain a representation
\begin{center}\begin{tikzpicture}[descr/.style={fill=white,inner sep=2.5pt}]
\matrix (m) [matrix of math nodes, row sep=1em,
column sep=3em, text height=1.5ex, text depth=0.2ex]
{   K^3  &K^7 & K^{10} \\
};
\path[->]
(m-1-1) edge node[above=0.015cm] {$a_{\lambda,\mu}$} (m-1-2)
(m-1-2) edge node[above=0.015cm] {$b$} (m-1-3)
(m-1-3) edge[loop right] node {$A$} (m-1-3);
\end{tikzpicture}\end{center}
For fixed parameters $\lambda,\mu\in K^*$, this representation  is isomorphic to a unique representation of the form
\begin{center}\begin{tikzpicture}[descr/.style={fill=white,inner sep=2.5pt}]
\matrix (m) [matrix of math nodes, row sep=1em,
column sep=3em, text height=1.5ex, text depth=0.2ex]
{   K^3  &K^7 & K^{10} \\ 
};
\path[->]
(m-1-1) edge node[above=0.015cm] {$e_{3,7}$} (m-1-2)
(m-1-2) edge node[above=0.015cm] {$e_{7,10}$} (m-1-3)
(m-1-3) edge[loop right] node {$N_{\lambda,\mu}$} (m-1-3);
\end{tikzpicture}.\qedhere\end{center}
\end{proof}

\begin{proposition}
Let $P$ be the parabolic subgroup $P$ of block sizes $(5,5)$. A $2$-parameter family of pairwise non-$P$-conjugate matrices in $\N^{(3)}$ is induced by the following matrices; here $\lambda,\mu\in K^*$: $$
N_{\lambda,\mu}=\left( \begin{array}{cccccccccc}
0 & 0 & 0 &0 & 0&0 &0&0&0&0\\ 
1& 0 & 0 &0 & 0&0 &0&0&0&0\\ 
0 & 0 & 0 &0 & 0&0 &0&0&0&0\\ 
0 & 0 & 0 &0 & 0&0 &0&0&0&0\\ 
0 & 0 & 0 &1 & 0&0 &0&0&1&0\\ 
0 & 0 & 0 &0 & 0&0 &0&0&0&0\\ 
\lambda & 0 & 0 &0 & 0&1 &0&0&0&0\\  
0 & 0 & 1 &0 & 0&0 &1&0&0&0\\  
0 & 1 & 0 &0 & 0&0 &0&0&0&0\\ 
0 & 0 & 0 &1-\mu & 0&0 &0&1&-\mu&0\end{array}\right) $$
\end{proposition}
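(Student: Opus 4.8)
The plan is to mirror the worked case of block sizes $(3,4,3)$, now realising the ``second case'' $x\geq 4$ of the proof of Proposition \ref{reptype}: here $p=2$ and the relevant nilpotency degree is $x=4$, so that $N_{\lambda,\mu}\in\N^{(4)}$ — one checks $N_{\lambda,\mu}^{4}=0$ while $N_{\lambda,\mu}^{3}\neq 0$. First I would pass to the covering quiver of $K\Q_2/I_4$ at the vertex $2$ and single out the finite, relation-free subquiver $\Q''$ from the proof of Proposition \ref{reptype} (two columns, four rows, with a path $\alpha_3\alpha_2\alpha_1$ of length three down the right column). Inside $\rep_K(\Q'')$ I would exhibit two indecomposables $U_{\ef}$, $U_{\eef}$ whose dimension vectors $\ef\coloneqq\dimv U_{\ef}$ and $\eef\coloneqq\dimv U_{\eef}$ are positive real roots with $\ef+\eef$ an imaginary root (negative Tits form), chosen so that its left-column entries sum to $b_1=5$ and its right-column entries sum to $n=10$, i.e.\ so that $\ef+\eef$ collapses under the covering functor to $\dfp=(5,10)$. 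The decisive requirement is $[U_{\eef},U_{\ef}]=0=[U_{\ef},U_{\eef}]^1$ together with $[U_{\eef},U_{\ef}]^1=3$ — precisely the input of Weist's construction.

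Given such $U_{\ef},U_{\eef}$, I would form, along a basis of the $3$-dimensional space $\Ext^1(U_{\eef},U_{\ef})$, the representatives $X_{\lambda,\mu}$ of the middle terms of the extensions $[0\rightarrow U_{\ef}\rightarrow X\rightarrow U_{\eef}\rightarrow 0]$; by the cited result these are pairwise non-isomorphic indecomposables. Pulling back along the covering, each $X_{\lambda,\mu}$ is a representation of $K\Q_2/I_4$ of dimension vector $\dfp=(5,10)$, and its structure arrow $K^5\rightarrow K^{10}$ is visibly injective, so $X_{\lambda,\mu}\in\rep^{\inj}_{K}(\Q_2,I_4)(\dfp)$. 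A change of basis in $K^5$ and $K^{10}$ — as in the displayed diagrams of the previous proposition — normalises $X_{\lambda,\mu}$ to the unique representation whose arrow is the natural embedding $e_{5,10}$ and whose loop is the displayed matrix $N_{\lambda,\mu}$.

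Finally I would apply Lemma \ref{bijection}: its bijection $\Phi$ identifies the $\GL_{\dfp}$-orbit of $X_{\lambda,\mu}$ with the $P$-orbit of $N_{\lambda,\mu}$ for $P=P_{5,5}$, so pairwise non-isomorphism of the $X_{\lambda,\mu}$ gives pairwise non-$P$-conjugacy of the $N_{\lambda,\mu}$, and $(\lambda,\mu)\mapsto [N_{\lambda,\mu}]$ is the asserted $2$-parameter family. Alternatively, non-conjugacy can be checked without representation theory by comparing the $P$-invariants $\dim(N^{k}\cdot V_j\cap V_{\geq i})$ attached to the block flag, in the spirit of Proposition \ref{descrequ}; these already differ across the family.

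The main obstacle is the bookkeeping, not a conceptual point. The two delicate steps are: (i) producing two indecomposables of $\Q''$ with \emph{exactly} the prescribed $\Hom$- and $\Ext^1$-dimensions, in particular $[U_{\eef},U_{\ef}]^1=3$ and the two vanishings, which forces a careful choice of the dimension vectors and structure maps; and (ii), after the gluing, finding the base change that normalises the glued representation to the displayed $N_{\lambda,\mu}$ while keeping the arrow $K^5\to K^{10}$ injective, so that the output genuinely lands in $\rep^{\inj}$ and hence in an honest $P$-orbit. As the author remarks, once these ingredients are fixed the family and its normal form ``follow directly from the construction'', the computation being of the same routine nature as in the case of block sizes $(3,4,3)$.
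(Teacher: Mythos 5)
Your plan reproduces exactly the argument the paper intends and itself leaves to the reader: realise the second case ($p=2$, $x\geq 4$) of the proof of Proposition \ref{reptype} via the relation-free covering subquiver $\Q''$, apply Weist's gluing construction to two real-root indecomposables with $[U_{\eef},U_{\ef}]^1=3$ and the stated vanishings, and transfer the resulting $2$-parameter family of pairwise non-isomorphic indecomposables of dimension vector $(5,10)$ to $P_{5,5}$-orbits via Lemma \ref{bijection}, exactly as in the worked $(3,4,3)$ case. Your observation that $N_{\lambda,\mu}$ is $4$-nilpotent but not $3$-nilpotent (so that the statement's $\N^{(3)}$ must be read as $\N^{(4)}$, consistently with Theorem \ref{classfinpar}) is also correct.
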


\end{document}